\providecommand\@dotsep{5}
\def\listtodoname{List of Todos}
\def\listoftodos{\@starttoc{tdo}\listtodoname}
\newtheorem{theorem}{Theorem}[section]
\newtheorem{proposition}[theorem]{Proposition}
\newtheorem{lemma}[theorem]{Lemma}
\newtheorem{thmx}{Theorem}
\newtheorem{corx}[thmx]{Corollary}
\newtheorem{propx}[thmx]{Proposition}
\newtheorem{quex}[thmx]{Question}
  \theoremstyle{definition}
\newtheorem{definition}[theorem]{Definition}
\newtheorem{remark}[theorem]{Remark}
\newcommand{\nclose}[1]{\ensuremath{\langle\!\langle#1\rangle\!\rangle}}
\newcommand{\dbN}{\mathbb{N}}
\newcommand{\dbQ}{\mathbb{Q}}
\newcommand{\dbZ}{\mathbb{Z}}
\newcommand{\calF}{{\mathcal F}}
\newcommand{\calG}{{\mathcal G}}
\newcommand{\calH}{{\mathcal H}}
\newcommand{\calO}{{\mathcal O}}
\newcommand{\calP}{{\mathcal P}}
\newcommand{\calR}{{\mathcal R}}
\newcommand{\orf}[1]{O_\calF #1}
\newcommand{\vcyc}{V\text{\tiny{\textit{CYC}}}}
\newcommand{\fin}{F\text{\tiny{\textit{IN}}}}
\newcommand{\evc}{\underline{\underline{E}}}
\DeclareMathOperator{\cd}{cd}
\DeclareMathOperator{\cdfin}{\underline{cd}}
\DeclareMathOperator{\cdvc}{\underline{\underline{cd}}}
\DeclareMathOperator{\dist}{\mathsf{dist}}
\DeclareMathOperator{\gd}{gd}
\DeclareMathOperator{\gdfin}{\underline{gd}}
\DeclareMathOperator{\gdvc}{\underline{\underline{gd}}}
\DeclareMathOperator{\Aut}{Aut}
\begin{document}

\title[ ]{Bowditch Taut Spectrum and dimensions of groups}

\author[E. Martínez-Pedroza]{Eduardo Martínez-Pedroza}
\address{Memorial University  St. John's, Newfoundland and Labrador, Canada}
\email{eduardo.martinez@mun.ca}

\author[L.J. Sánchez Saldaña]{Luis Jorge S\'anchez Salda\~na}
\address{Departamento de Matemáticas,
Facultad de Ciencias, Universidad Nacional Autónoma de México, Mexico}
\email{luisjorge@ciencias.unam.mx}
 
\subjclass[2020]{Primary 57M07, 57M60, 20F65, 55R35}

\date{}

\keywords{Taut spectrum, quasi-isometry classes, classifying spaces, geometric and cohomological dimensions}

\begin{abstract}
For a finitely generated group $G$, let $H(G)$ denote Bowditch's taut loop length spectrum. We prove that if $G=(A\ast  B) / \langle\!\langle \mathcal R \rangle\!\rangle $ is a $C'(1/12)$ small cancellation quotient of a the free product of finitely generated groups, then $H(G)$ is equivalent to $H(A) \cup H(B)$. 
We use this result together with bounds for cohomological and geometric dimensions, as well as Bowditch's construction of continuously many non-quasi-isometric $C'(1/6)$ small cancellation $2$-generated groups  to obtain our main result: Let $\mathcal{G}$ denote the class of finitely generated groups. The following subclasses contain  continuously many  one-ended non-quasi-isometric groups:
\begin{enumerate}
     \item  $\left\{G\in \mathcal{G} \colon \underline{\mathrm{cd}}(G) = 2 \text{  and } \underline{\mathrm{gd}}(G) = 3 \right\}$
  
     \item $\left\{G\in \mathcal{G} \colon \underline{\underline{\mathrm{cd}}}(G) = 2 \text{  and } \underline{\underline{\mathrm{gd}}}(G) = 3 \right\}$

    \item $\left\{G\in \mathcal{G} \colon \mathrm{cd}_{\mathbb{Q}}(G)=2 \text{  and } \mathrm{cd}_{\mathbb{Z}}(G)=3 \right\}$
   \end{enumerate}
   
   On our way to proving the aforementioned results, we show that the classes defined above are closed under taking relatively finitely presented $C'(1/12)$ small cancellation quotients of free products, in particular, this produces new examples of groups exhibiting an Eilenberg-Ganea phenomenon for families.
   
   We also show that if there is a finitely presented counter-example to the Eilenberg-Ganea conjecture, then  there are  continuously many finitely generated one-ended non-quasi-isometric counter-examples.
\end{abstract}

\maketitle

\setcounter{tocdepth}{1}
\tableofcontents

\section{Introduction}

Let $G$ be a discrete group. Let $\cd_R(G)$ denote the cohomological dimension of $G$ with respect to the ring $R$. Analogously, let $\cdfin(G)$ and $\cdvc(G)$ denote the proper cohomological dimension and the virtually cyclic cohomological dimension of $G$ respectively, and let $\gdfin(G)$ and $\gdvc(G)$ be their geometric counterparts. For explicit definitions see Section~\ref{sec:dimensional:bounds}. The main result of this  manuscript is the following. 

\begin{thmx}\label{thm:main}
Let $\mathcal{G}$ denote the class of finitely generated groups. The following subclasses contain  continuously many  one-ended non-quasi-isometric groups:
\begin{enumerate}
     \item  $\left\{G\in \mathcal{G} \colon \cdfin(G) = 2 \text{  and } \gdfin(G) = 3 \right\}$
  
     \item $\left\{G\in \mathcal{G} \colon \cdvc(G) = 2 \text{  and } \gdvc(G) = 3 \right\}$

    \item $\left\{G\in \mathcal{G} \colon \cd_{\mathbb{Q}}(G)=2 \text{  and } \cd_{\mathbb{Z}}(G)=3 \right\}$
   \end{enumerate}
\end{thmx}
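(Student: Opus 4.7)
Following the strategy sketched in the abstract, the plan is to assemble Theorem~\ref{thm:main} from three ingredients developed earlier in the paper: the main taut-spectrum theorem, stating that for a $C'(1/12)$ small cancellation quotient $G=(A\ast B)/\nclose{\calR}$ one has $H(G)\sim H(A)\cup H(B)$; the closure of each of the three classes under relatively finitely presented $C'(1/12)$ small cancellation quotients of free products; and Bowditch's continuum-sized family $\{B_\alpha\}_{\alpha\in\mathfrak{c}}$ of $2$-generated $C'(1/6)$ small cancellation groups whose taut spectra are pairwise inequivalent.

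For each class I would fix a seed group $A_i$ already known to lie in that class: for (1) a Brady--Leary--Nucinkis style example realising the proper Eilenberg--Ganea phenomenon; for (2) a corresponding example for the virtually cyclic family; and for (3) a Bestvina--Brady group with $\cd_{\mathbb Q}(A_3)=2$ and $\cd_{\mathbb Z}(A_3)=3$. For every index $\alpha$ I then form
\[
 G_{i,\alpha}\;=\;(A_i\ast B_\alpha)/\nclose{\calR_\alpha},
\]
where $\calR_\alpha$ is a finite set of relators over the free product, chosen long enough to satisfy the $C'(1/12)$ small cancellation condition and mixing letters from both factors so that the quotient has no non-trivial free splitting. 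Stallings' theorem then delivers one-endedness, and the closure statement places each $G_{i,\alpha}$ in class $(i)$.

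Applying the taut-spectrum theorem yields $H(G_{i,\alpha})\sim H(A_i)\cup H(B_\alpha)$. Since the $H(B_\alpha)$ are pairwise inequivalent while $H(A_i)$ is a fixed set, the equivalence classes of these unions still form a family of size $\mathfrak{c}$, so after passing to a continuum subfamily the $G_{i,\alpha}$ have pairwise inequivalent taut spectra. As quasi-isometric finitely generated groups have equivalent Bowditch spectra, the resulting $G_{i,\alpha}$ are pairwise non-quasi-isometric, proving Theorem~\ref{thm:main} in each of the three cases.

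The main obstacle is the combined verification on $\calR_\alpha$: it must satisfy $C'(1/12)$ over the free product $A_i\ast B_\alpha$ (a condition from small cancellation theory over graphs of groups, not merely over a free group), produce one-endedness, and be compatible with preserving the distinguishing information contained in $H(B_\alpha)$ after union with $H(A_i)$. The last of these is the most delicate: one must arrange that Bowditch's distinguishing values of $H$ occur at scales which the fixed spectrum $H(A_i)$ cannot obscure, so that the equivalence class of $H(A_i)\cup H(B_\alpha)$ genuinely depends on $\alpha$.
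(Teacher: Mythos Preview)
Your overall strategy matches the paper's: fix a seed group $A$ in the target class, form $C'(1/12)$ small cancellation quotients $G_\alpha=(A\ast B_\alpha)/\nclose{\calR_\alpha}$ with Bowditch's groups $B_\alpha$, use the closure results to keep $G_\alpha$ in the class, and distinguish the $G_\alpha$ by their taut spectra. Two points deserve comment.

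\medskip

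\textbf{The ``main obstacle'' dissolves.} The paper chooses each seed $A$ to be \emph{finitely presented} (in fact hyperbolic): the Brady--Leary--Nucinkis and Fluch--Leary examples for (1) and (2) are hyperbolic, and for (3) one uses a hyperbolic group of Bestvina--Mess/Dranishnikov type (not a Bestvina--Brady group). Since the taut spectrum of any finitely presented group is equivalent to the empty set, one gets $H(G_\alpha)\sim H(A)\cup H(B_\alpha)\sim H(B_\alpha)$ directly, and Bowditch's pairwise inequivalence transfers without loss. Your proposed workaround---arranging the distinguishing scales of $H(B_\alpha)$ to avoid $H(A)$---would require reopening Bowditch's construction and is unnecessary.

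\medskip

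\textbf{One-endedness needs more care.} ``Mixing letters so the quotient has no free splitting'' is not an argument; a $C'(1/12)$ quotient of $A\ast B$ can certainly split over a finite group if $A$ or $B$ does. The paper proceeds in two steps. First, since the seed is hyperbolic (hence finitely presented), Dunwoody accessibility gives a one-ended hyperbolic vertex group still in $\mathrm{P}\setminus\mathrm{Q}$; this is where the hypothesis that $\mathrm{Q}$ is closed under amalgams and HNN extensions over finite subgroups is used. Second, each $B_\alpha$ is one-ended because a torsion-free $2$-generated group that is not free is one-ended. With $A$ and $B_\alpha$ both one-ended and the relators built from infinite-order elements, the paper's result on one-ended small cancellation products applies to $G_\alpha$.
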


The proof of the theorem uses a quasi-isometry invariant of finitely generated groups,
introduced by Bowditch~\cite{Bow98}, known as the \emph{taut loop length spectrum} (or \emph{taut spectrum} for short). For a finitely generated group $G$ this invariant,  denoted as $H(G)$,  takes values on the power set of the natural numbers modulo an  equivalence relation, see Section~\ref{sec:Taut}. Bowditch observed that if $A$ and $B$ are finitely generated quasi-isometric groups, then the taut spectrums $H(A)$ and $H(B)$ are equivalent~\cite[Lemma~3]{Bow98}.
The taut spectra of all finitely presented groups are equivalent. In contrast, using small cancellation theory, Bowditch proved the following result on which our main result relies on. 

\begin{thmx}\cite{Bow98}\label{thm:bowditch:B:alpha}
There are continuously many  torsion-free $2$-generator $C'(1/6)$ small cancellation groups $\{B_\alpha \colon \alpha\in \mathbb{R}\}$ such that $H(B_\alpha)$ and $H(B_\beta)$ are equivalent only if $\alpha=\beta$.
\end{thmx}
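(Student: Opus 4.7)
The plan is to construct each $B_\alpha$ as a $C'(1/6)$ small cancellation quotient of the free group $F_2=\langle a,b\rangle$ whose defining relators have lengths that encode $\alpha$, and then use small cancellation geometry to recover that encoding from the taut spectrum. The first step is to fix, for each $\alpha\in\mathbb{R}$, a strictly increasing sequence $\ell_\alpha=(\ell_{\alpha,n})_{n\in\mathbb{N}}$ of positive integers growing so rapidly (e.g.\ tower-like) that distinct $\alpha$'s give sequences that remain inequivalent under the equivalence relation on subsets of $\mathbb{N}$ used to define $H$. A Cantor-style encoding, where the $n$-th binary digit of $\alpha$ decides between two widely separated admissible values of $\ell_{\alpha,n}$, produces continuum-many such sequences simultaneously.

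Second, for each $\alpha$ I would choose cyclically reduced, non-proper-power words $r_{\alpha,n}\in F_2$ with $|r_{\alpha,n}|=\ell_{\alpha,n}$ satisfying $C'(1/6)$, built greedily: at stage $n$, pick any word of the prescribed length whose cyclic subwords of length at least $\ell_{\alpha,n}/6$ do not appear in any cyclic conjugate of $r_{\alpha,1},\dots,r_{\alpha,n-1}$ or their inverses. Fast growth of $\ell_\alpha$ makes the forbidden patterns negligible, so a valid choice always exists. Set $R_\alpha=\{r_{\alpha,n}\}$ and $B_\alpha=F_2/\nclose{R_\alpha}$. Since no relator is a proper power and the presentation is $C'(1/6)$, Greendlinger's lemma gives that $B_\alpha$ is torsion-free and $2$-generated.

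Third, one must identify $H(B_\alpha)$. By Greendlinger's lemma, every non-trivial null-homotopic word in the Cayley graph of $B_\alpha$ contains more than half of some cyclic conjugate of a relator; consequently the shortest null-homotopic loop has length on the order of $\ell_{\alpha,1}$, and the taut loops, which cannot be efficiently decomposed into shorter sub-loops, correspond (up to bounded error) to the individual relators. A diagrammatic analysis of van Kampen diagrams, exploiting the fact that two distinct relators share at most a tiny piece, shows that composing relators cannot produce taut loops of intermediate lengths: any candidate taut loop either essentially coincides with one relator boundary or splits along a short arc into strictly shorter taut subloops. Hence $H(B_\alpha)$ is equivalent to the sequence $\ell_\alpha$, and Step~1 yields $H(B_\alpha)\sim H(B_\beta)\Longrightarrow \alpha=\beta$.

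The main obstacle is the precise verification in Step~3 that the taut spectrum records exactly the relator lengths and acquires no extraneous lengths from combinations of relators in van Kampen diagrams. This is where the fine isoperimetric and combinatorial geometry of $C'(1/6)$ diagrams enters: one has to control how diagrams with many $2$-cells could in principle produce loops of intermediate length, and rule this out by using both the smallness of pieces and the rapid growth of $\ell_\alpha$ relative to the small cancellation constant. Choosing $\ell_\alpha$ to grow fast enough neutralises this threat, but making the estimate rigorous is the technical heart of the argument, and is precisely the content of Bowditch's original construction.
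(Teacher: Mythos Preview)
The paper does not give its own proof of this statement; it is quoted verbatim from Bowditch~\cite{Bow98} and used as a black box in the proof of Theorem~\ref{thm:abstract}. There is therefore nothing in this paper to compare your proposal against.

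That said, your outline is a faithful sketch of Bowditch's original construction: encode a subset of $\mathbb{N}$ (equivalently a real number) by the length sequence of the relators, choose the relators to satisfy $C'(1/6)$ with no proper powers, and then argue via small cancellation diagram estimates that $H(B_\alpha)$ is equivalent to the set of relator lengths. You have also correctly located the genuine work in Step~3. One refinement worth noting: in Bowditch's paper the relators are taken to be explicit words of a fixed combinatorial shape (rather than built greedily), which makes the piece analysis and the isoperimetric estimates cleaner; your greedy construction would work in principle but adds bookkeeping you do not need. The rapid growth of the length sequence is indeed what rules out spurious taut loops coming from multi-cell van Kampen diagrams, exactly as you say.
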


The second ingredient to prove the main result is the following computation of the taut spectrum for certain small cancellation quotients of free products, as defined in the book by Lyndon and Schupp~\cite[Chapter  V.11]{LySc01}.
Recall that a group of the form $G = (A\ast_C  B)/ \nclose{R}$ where $\mathcal{R}$ is a  symmetrized subset of $A\ast_C  B$ satisfying the $C'(1/6)$ small cancellation condition is called a \emph{small cancellation product}. If $\mathcal R$ is finite, we say $G$ is  \emph{relatively finitely presented}. If no element of $\mathcal{R}$ is a proper power, we say that \emph{$G$ is relatively torsion-free}.

\begin{thmx}[\Cref{thm:taut:small:cancellation}]\label{thm:TautProducts}
Let $A$ and $B$ be finitely generated groups. Let $\mathcal R$ be a finite symmetrized subset of $A\ast  B$ that satisfies the $C'(1/6)$ small cancellation condition, and let $G = (A\ast  B)/ \nclose{\mathcal R}$.
Then 
$H(G)$ and $H(A) \cup H(B) $ are equivalent.
\end{thmx}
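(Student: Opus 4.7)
The plan is to compute $H(G)$, $H(A)$, and $H(B)$ in compatible Cayley $2$-complexes and transfer taut loops between them, using the Greendlinger machinery of small cancellation products from Lyndon-Schupp V.11 together with the Bass-Serre structure of $A\ast B$. Fix finite generating sets $S_A, S_B$ for $A, B$, let $R_A, R_B$ be full defining relator sets, and let $\tilde X_A, \tilde X_B, \tilde Z$ be the Cayley $2$-complexes for $A$, $B$, and $G=\langle S_A\sqcup S_B\mid R_A\sqcup R_B\sqcup \mathcal R\rangle$, respectively. Since $H(\cdot)$ is a quasi-isometry invariant, I compute the three spectra in these models. Let $L=\max\{|r|:r\in\mathcal R\}$, which is finite by hypothesis.

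For $H(A)\cup H(B)\subseteq H(G)$, I take a taut loop $\gamma$ of length $n$ in $\tilde X_A$ (the case of $B$ being symmetric), view it inside $\tilde Z$, and assume for contradiction a van Kampen diagram $D$ for $\gamma$ in $\tilde Z$ whose $2$-cells all have length $<n$. After passing to a reduced $D$, the structure theorem for reduced van Kampen diagrams over $C'(1/6)$ small cancellation products decomposes $D$ into $\mathcal R$-cells joined by maximal $A\ast B$-subdiagrams, and the Bass-Serre tree structure of $A\ast B$ ensures that the $B$-regions and the $\mathcal R$-regions not carrying $\gamma$ can be excised, leaving a van Kampen diagram for $\gamma$ inside $\tilde X_A$ with all cells of length $<n$ — contradicting the tautness of $\gamma$ in $A$.

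For $H(G)\subseteq H(A)\cup H(B)$ up to equivalence, let $\gamma$ be taut in $\tilde Z$ of length $n$. The case $n\leq L$ contributes a bounded subset absorbed by the equivalence, so assume $n>L$. Applying Dehn's algorithm for small cancellation products (Greendlinger's lemma) produces a finite sequence of loops $\gamma=\gamma_0,\gamma_1,\ldots,\gamma_k=\gamma_{\mathrm{final}}$ where each step is either a Dehn move using an $\mathcal R$-cell of length $\leq L$ or a cyclic reduction in $A\ast B$, and $\gamma_{\mathrm{final}}$ admits no further such reductions; by Greendlinger, $\gamma_{\mathrm{final}}$ is trivial in $A\ast B$, and by the Bass-Serre normal form it is carried by a single vertex of the Bass-Serre tree, i.e.\ is a loop in $A$ or in $B$. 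The tautness of $\gamma$ in $G$, combined with the fact that every $\mathcal R$-cell has length $\leq L<n$, forces any filling of $\gamma_{\mathrm{final}}$ in $\tilde X_A\vee \tilde X_B$ to use a cell of length $\geq n$; iteratively replacing non-taut relator boundaries in that filling by strictly-smaller-max-cell fillings in $A$ or $B$ produces a taut loop in $A$ or $B$ of length at least $n$, giving the required element of $H(A)\cup H(B)$ at the scale demanded by the Bowditch equivalence.

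The main obstacle is the control in the second direction: carefully tracking cell lengths through the Dehn/Bass-Serre reductions so that the extracted taut loop in $A$ or $B$ sits at the correct scale relative to $n$, and in particular verifying that the length gap $n-|\gamma_{\mathrm{final}}|$ does not dilute the correspondence between spectra. Additionally, maintaining the minimality and reducedness of van Kampen diagrams throughout these reductions will require careful book-keeping using the reduced-diagram formalism for small cancellation products and the tree-of-blocks decomposition of minimal free-product diagrams.
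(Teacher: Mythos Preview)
Your overall strategy---use Greendlinger/Dehn reductions together with the free-product structure to transfer taut loops---is in the same spirit as the paper's, but both directions as written have a genuine gap, and it is the \emph{same} gap: you conflate the Bowditch complex $\Gamma_n(G)$ (which has a $2$-cell for \emph{every} loop of length $<n$ in the Cayley graph of $G$) with the relator complex $\tilde Z$ restricted to cells of length $<n$ (which has only homogeneous $A$- or $B$-relator cells and $\mathcal R$-cells). In your first direction you assume for contradiction ``a van Kampen diagram $D$ for $\gamma$ in $\tilde Z$ whose $2$-cells all have length $<n$'', but what you must actually assume is that $\gamma$ is null-homotopic in $\Gamma_n(G)$. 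Cells of $\Gamma_n(G)$ are arbitrary short loops in $G$, typically \emph{non-homogeneous}, and they are not relator cells of $\tilde Z$; to replace such a cell by relator cells you must fill it, and there is no a~priori bound on the length of the homogeneous relators that appear in that filling. Concretely, if a short non-homogeneous loop has a non-geodesic $B$-syllable $S$, replacing $S$ by a geodesic word $T$ uses the $B$-relator $S\bar T$, whose length can be close to $2|S|$---already forcing a factor of~$2$. So the step ``null-homotopic in $\Gamma_n(G)$ $\Rightarrow$ fillable by relator cells of length $<n$'' is exactly where the argument breaks.

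The paper patches this by introducing an intermediate complex $\Gamma'_\ell(G)$ built from homogeneous cells of length $<\ell$ together with the (boundedly many) $\mathcal R$-cells, and proving two things: (i)~$\Gamma_\ell(A)$ and $\Gamma_\ell(B)$ embed $\pi_1$-injectively into $\Gamma'_\ell(G)$ (this is the part your excision argument is aiming at, and it works once you are in the relator complex), and (ii)~any loop null-homotopic in $\Gamma_\ell(G)$ is already null-homotopic in $\Gamma'_{2\ell}(G)$---the factor of~$2$ is unavoidable and is exactly the cost of straightening non-geodesic syllables. A further point you are missing is that the Greendlinger lemma for free products is stated in terms of \emph{syllable} length, not word length; the paper handles this by enlarging the generating sets so that every syllable appearing in an element of $\mathcal R$ becomes a single generator, after which the word-metric Dehn algorithm goes through. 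With these two devices in place, both directions yield a $2$-relation between $H(G)$ and $H(A)\cup H(B)$ above a fixed threshold, which is precisely the equivalence claimed; your proposal does not yet supply either device, and the ``main obstacle'' you flag in the second direction is a symptom of the same missing comparison between $\Gamma_\ell$ and $\Gamma'_{2\ell}$.
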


The third ingredient to prove our main result is a collection of dimensional bounds for multi-ended groups. We say a group $G$ \emph{has small centralizers} if every infinite cyclic subgroup $C$ of $G$ has finite index in its centralizer $C_G(C)$.

\begin{thmx}[Theorems~\ref{cor:closed:under:takings:graphs} and \ref{cor:megafinal-body} ]\label{cor:closed:under:takings:graphs:intro}
Let $G$ be the fundamental group of a graph of groups with finite edge stabilizers, and let $T$ be the Bass-Serre tree.
Then the following inequalities hold
\[\gdfin(G)\leq \max\{1,\gdfin(G_{\sigma})| \ 
\sigma\in \ T \},\quad \cdfin(G)\leq \max\{1,\cdfin(G_{\sigma})| \ 
\sigma \in \ T \}\]
\[\gdvc(G)\leq \max\{2,\gdvc(G_{\sigma})| \ 
\sigma \in \ T \},\quad\cdvc(G)\leq \max\{2,\cdvc(G_{\sigma})| \ 
\sigma \in \ T \}.\]
If, in addition, all vertex groups have small centralizers and satisfy the ascending chain condition for finite subgroups, then 
\[\gdvc(G)\leq \max\{2,\gdfin(G_{\sigma})| \ 
\sigma \in \ T \}\text{ and }\cdvc(G)\leq \max\{2,\cdfin(G_{\sigma})| \ 
\sigma \in \ T \}.\]

\end{thmx}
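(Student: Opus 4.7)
The plan is to exploit the action of $G$ on the Bass-Serre tree $T$, which is a one-dimensional $G$-CW-complex whose edge stabilizers are finite by hypothesis. The two families of bounds will be obtained by explicit constructions of classifying spaces combined with Mayer--Vietoris arguments for the cohomological inequalities.

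For the bounds on $\gdfin$ and $\cdfin$, I would build a model of $\underline{E}G$ by starting from $T$ and gluing, at each vertex orbit $G\cdot v$, a copy of $G\times_{G_v} X_v$ where $X_v$ is a model of $\underline{E}G_v$ realizing $\gdfin(G_v)$. Finite subgroups of the edge stabilizers fix points in each adjacent $X_v$, so the pushout is straightforward and produces a proper $G$-CW-complex of dimension at most $\max\{1,\gdfin(G_\sigma):\sigma\in T\}$; this is a model for $\underline{E}G$ because every finite subgroup of $G$ fixes a vertex of $T$ and therefore a point in the glued space. For the cohomological statement, I would use the Mayer--Vietoris long exact sequence in Bredon cohomology with respect to $\fin$ associated to the graph-of-groups decomposition: since edge stabilizers are finite, their Bredon cohomology with $\orfin{G}$-module coefficients vanishes in positive degrees, which forces the Bredon cohomology of $G$ to vanish in degrees larger than $\max\{1,\cdfin(G_\sigma)\}$.

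For the bounds on $\gdvc$ and $\cdvc$, the key input is that every infinite virtually cyclic subgroup of $G$ is either elliptic (contained in a vertex stabilizer) or loxodromic (preserving a unique axis in $T$). Starting from the proper model above, I would apply the Lück--Weiermann pushout construction: for each conjugacy class of maximal infinite virtually cyclic $V\leq G$, attach a piece built from models of $\underline{E}N_G(V)$ and $\evc N_G(V)$. Elliptic subgroups are handled by $\evc G_v$ at the relevant vertex, contributing at most $\gdvc(G_v)$. For loxodromic $V$, the normalizer $N_G(V)$ preserves the axis of $V$ and acts on it cocompactly with finite kernel, so $N_G(V)$ is itself virtually cyclic; hence $\evc N_G(V)$ is a point, $\underline{E}N_G(V)$ is one-dimensional, and the attached piece has dimension at most $2$, which accounts for the $2$ in the maximum. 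The corresponding bound on $\cdvc$ follows from the analogous Mayer--Vietoris sequence in $\vcyc$-Bredon cohomology.

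The sharper inequalities under the small-centralizer and ACC assumptions are the most delicate part. The small centralizer condition forces every infinite virtually cyclic $V\leq G_v$ to satisfy $[N_{G_v}(V):V]<\infty$, so $N_{G_v}(V)$ is virtually cyclic; combined with ACC on finite subgroups, an internal Lück--Weiermann argument inside each $G_v$ yields $\gdvc(G_v)\leq \max\{2,\gdfin(G_v)\}$, and likewise for cohomological dimensions. Substituting into the previous graph-of-groups bounds gives the improved inequalities. The main obstacle throughout is managing the loxodromic contribution in the Lück--Weiermann pushout: one must verify that these attachments contribute dimension exactly $2$ and glue compatibly with the vertex pieces supplied by $X_v$ and $\evc G_v$ along the edges of $T$.
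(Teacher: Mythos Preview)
Your approach is largely correct and, for the $\fin$ bounds and for the final ``small centralizers $+$ ACC'' step, essentially coincides with the paper's: both build a tree of classifying spaces over $T$ (this is exactly the Haefliger--L\"uck construction the paper invokes via its \cref{coro:haefliger}), and both deduce the sharpened $\vcyc$ bounds by applying the inequality $\gdvc(G_v)\le\max\{2,\gdfin(G_v)\}$ inside each vertex group and substituting into the graph-of-groups bound.

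The genuine difference is in the middle step, the $\vcyc$ bounds for a general graph of groups with finite edges. The paper does not use a L\"uck--Weiermann pushout here; instead it observes that the $G$-action on $T$ is acylindrical (edge stabilizers are finite) and invokes a result from \cite{LASS21} producing a $2$-dimensional model for $E_{\calH}G$, where $\calH\supseteq\vcyc$ is obtained by coning off the hyperbolic axes in $T$. One then applies the Haefliger--L\"uck replacement (\cref{coro:haefliger}) to this $2$-complex with $\calF=\vcyc$, analyzing the cell stabilizers (cone points and their incident cells have virtually cyclic stabilizers, original $T$-vertices contribute $\gdvc(G_v)$). Your route---build a tree of $\evc G_v$'s and then attach L\"uck--Weiermann pieces for the loxodromic classes---reaches the same bound but is technically more delicate: the global L\"uck--Weiermann pushout requires every infinite virtually cyclic subgroup to sit in a unique maximal one with self-normalizing commensurator, which is \emph{not} assumed for the elliptic subgroups inside arbitrary vertex groups. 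So you cannot literally ``apply the L\"uck--Weiermann pushout'' once; you must first pass to the intermediate family generated by virtually cyclic subgroups of vertex groups via a tree-of-spaces, and then apply an adapted L\"uck--Weiermann only to the loxodromic classes (for which the axis-stabilizer is indeed virtually cyclic, so the hypotheses hold). This works, but the paper's coning-off approach packages the elliptic/loxodromic dichotomy more uniformly and avoids having to verify the L\"uck--Weiermann commensurability hypotheses separately.
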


The fourth ingredient is a computation of cohomological and geometric dimensions  for small cancellation products. It could be interpreted as a generalization of  previous result in the sense that some of those dimensional bounds are preserved under large quotients.

\begin{thmx}[\cref{pre-Machine-body}]\label{pre-Machine} 
Let $A$ and $B$ be groups and let $C$ be a common finite subgroup.  Let $\mathcal R$ be a finite symmetrized subset of $A\ast_C B$ that satisfies the $C'(1/12)$ small cancellation condition, and $G = (A\ast_C B)/ \nclose{\mathcal R}$. Assume $G$ is not virtually free. Then the following conclusions hold.
\begin{enumerate}
    \item $ \gdfin(G) =\max\{ \gdfin(A),\gdfin(B), 2\} \text{ and }   \cdfin(G) =\max\{ \cdfin(A),\cdfin(B), 2\}$
    
     \item If  $A$ and $B$ are finitely generated, have small centralizers and satisfy the  ascending chain condition for finite subgroups, then $G$ has small centralizers, satisfies the ascending chain
    condition, and
     \[\max\{\gdvc(A),\gdvc(B),2\}\leq \gdvc(G)\leq\max\{\gdfin(A),\gdfin(B),2\}\]
     and
    \[\max\{\cdvc(A),\cdvc(B),2\}\leq \cdvc(G)\leq\max\{\cdfin(A),\cdfin(B),2\}.\]
    
    \item For any ring $R$, we have $ \cd_{R}(G) =\max\{ \cd_{R}(A),\cd_{R}(B), 2\} .$ 
\end{enumerate}
\end{thmx}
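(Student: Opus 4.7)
\emph{Strategy.} The plan is to build an explicit cocompact model $X$ for $\underline{E}G$ whose dimension realizes the upper bound in (1), then to deduce the matching lower bounds from subgroup injectivity plus the non-virtual-freeness of $G$. The same model---with $\underline{E}A$, $\underline{E}B$ swapped for $K(A,1)$, $K(B,1)$ models of minimal $\cd_R$-dimension---handles (3). For (2) the plan is to enlarge $X$ by a L\"uck--Weiermann pushout and to use the small centralizer/ACC hypotheses to ensure no extra dimension is added.

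\emph{Construction of $X$ and upper bounds for (1).} Pick models $Y_A, Y_B$ realizing $\gdfin(A), \gdfin(B)$. Since $C$ is finite, the tree-of-spaces construction over the Bass--Serre tree of $A\ast_C B$ (equivalently, Theorem~D applied to this splitting) produces a model $Y$ for $\underline{E}(A\ast_C B)$ of dimension $\max\{1,\gdfin(A),\gdfin(B)\}$. For each $r\in\mathcal{R}$, attach a $G$-orbit of $2$-cells along the axis of $r$ to obtain a cocompact $G$-CW complex $X$ of dimension $\max\{\gdfin(A),\gdfin(B),2\}$; cells inherited from $Y$ have stabilizers conjugate into $A$ or $B$, while new $2$-cells are stabilized by finite cyclic root subgroups of the $r$'s. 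The key step is to identify $X$ with $\underline{E}G$, i.e.\ prove contractibility. This is a proper-action version of the Lyndon--Schupp/Tartakovsky asphericity theorem for small-cancellation products (\cite[Ch.~V.11]{LySc01}); the exponent $1/12$ is exactly what is needed to handle torsion in $A,B$ and to control links at relator vertices.

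\emph{Lower bounds and part (3).} Small cancellation implies that the inclusions $A,B\hookrightarrow G$ are injective, so $\gdfin(G)\geq\max\{\gdfin(A),\gdfin(B)\}$ and similarly for $\cdfin$, $\gdvc$, $\cdvc$ and $\cd_R$. The hypothesis that $G$ is not virtually free upgrades this to $\geq 2$ via Dunwoody (for $\cdfin$, $\gdfin$) and Stallings--Swan (for $\cd_R$ when finite). Part (3) is then obtained by running the construction above with $K(A,1),K(B,1)$-models of minimal $\cd_R$-dimension in place of $Y_A,Y_B$; the same small-cancellation asphericity argument identifies the resulting $2$-dimensional-relators complex with a $K(G,1)$.

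\emph{Part (2) and main obstacle.} First one must show that $G$ inherits small centralizers and ACC. An element $g\in G$ is either elliptic in the $G$-action on $X$, hence conjugate into $A$, $B$, or a relator-stabilizer with controlled centralizer, or loxodromic, in which case its centralizer is virtually cyclic by the acylindrical hyperbolicity supplied by the $C'(1/12)$ condition on $X$. Finite subgroups of $G$ are conjugate into vertex or relator stabilizers, so ACC passes up. Starting from the $\underline{E}G$-model of dimension $n=\max\{\gdfin(A),\gdfin(B),2\}$ built above and applying the L\"uck--Weiermann pushout for the family $\vcyc$, the small centralizer hypothesis ensures each attached mapping cylinder has dimension $\leq 2\leq n$, yielding $\gdvc(G)\leq n$; the lower bound $\gdvc(G)\geq\max\{\gdvc(A),\gdvc(B),2\}$ is by subgroup embedding and non-virtual-freeness, and similarly for $\cdvc$. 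The main obstacle throughout is the relative $C'(1/12)$-asphericity argument of the construction step in the presence of torsion in $A,B$; a secondary obstacle is the bookkeeping in L\"uck--Weiermann, ensuring the total dimension of $\evc G$ does not exceed $n$.
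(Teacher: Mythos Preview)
Your overall architecture matches the paper's: build a model for $\underline{E}G$ over the coned-off Bass--Serre graph, then pass to $\evc G$ via a L\"uck--Weiermann pushout for part~(2). The organization differs in a way that matters, though. Rather than assembling the high-dimensional tree of spaces first and then appealing to a bespoke ``relative asphericity'' result for it, the paper works entirely with the $2$-dimensional coned-off Cayley complex $\hat X$ (one-skeleton $T/\nclose{\mathcal R}$, a $2$-cell for each relator translate) and shows this is a cocompact model for $E_{\calF}G$, where $\calF$ is the family generated by cell stabilizers. The $C'(1/12)$ hypothesis enters right here: it guarantees $\hat X$ is a $C'(1/6)$-complex, hence contractible; almost malnormality of $\{A,B\}$ (from relative hyperbolicity) then handles fixed-point sets of infinite subgroups. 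The dimension bound is then a general Haefliger--L\"uck inequality: whenever $\fin\subseteq\calF$ and $X$ is a model for $E_{\calF} G$, one has $\gdfin(G)\leq\max_\sigma\{\gdfin(G_\sigma)+\dim\sigma\}$, and likewise for $\cdfin$. Your tree of spaces is exactly the output of this construction applied to $\hat X$, so the advantage of the paper's route is that contractibility is only ever checked on the $2$-complex, where it is classical small cancellation.

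There is a genuine gap in your handling of the cohomological statements. For $\cdfin$ in~(1) and $\cd_R$ in~(3) you propose to ``swap in'' vertex spaces of dimension $\cdfin(A)$ or $\cd_R(A)$. Such spaces need not exist: a $K(A,1)$ of dimension $\cd_R(A)$ is precisely what the Eilenberg--Ganea problem forbids in general (indeed, item~(3) of the main theorem is built on groups with $\cd_{\mathbb Q}=2<3=\cd_{\mathbb Z}$, which have no $2$-dimensional $K(\pi,1)$), and if $A$ has torsion there is no finite-dimensional $K(A,1)$ at all. The paper sidesteps this entirely: for~(3) it uses only that $\hat X$ is a $2$-dimensional acyclic $G$-complex and applies the classical bound $\cd_R(G)\leq\max_\sigma\{\cd_R(G_\sigma)+\dim\sigma\}$; for $\cdfin$ it uses the Bredon-module half of the Haefliger--L\"uck inequality, which assembles projective resolutions of restricted $\calO_{\fin}G$-modules rather than spaces. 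Your geometric construction delivers only the $\gdfin$ upper bound; the cohomological upper bounds genuinely require this separate algebraic input.
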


A notable hypothesis of \Cref{pre-Machine} is the $C'(1/12)$ small cancellation condition instead of the standard $C'(1/6)$ condition. While we do not claim that $C'(1/12)$  is the optimal hypothesis, we refer the reader to \Cref{rem:SmallCancellation} which gives an insight of the difficulty. 

The fifth ingredient to prove the main result are the following statements that provide sufficient conditions for certain groups to be one-ended.

\begin{thmx}[\cref{prop:OneEndedProducts}]\label{prop:OneEndedProducts:intro}
Let $G=(A\ast_C B)/\langle\langle \mathcal R \rangle\rangle$, where $C$ is a common finite subgroup of $A$ and $B$, and $\mathcal R$ is a symmetrized subset of $A\ast B$ that satisfies the $C'(1/6)$ small cancellation condition.
\begin{itemize}
\item Suppose that for  every $r\in \mathcal{R}$, its normal form does not  contain   elements of  finite order of $A$ or $B$.
\end{itemize}
If $A$ and $B$ are one-ended, then $G$ is one-ended.
\end{thmx}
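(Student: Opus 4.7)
The plan is to argue by contradiction using Stallings' theorem on ends of finitely generated groups together with Bass--Serre theory. Assume $G$ has more than one end. Stallings' theorem then furnishes a nontrivial action of $G$ on a simplicial tree $T$ with finite edge stabilizers and no global fixed vertex. Because the $C'(1/6)$ small cancellation condition ensures that $A$ and $B$ embed in $G$ with $A \cap B = C$ (Lyndon--Schupp), and because one-ended groups admit no nontrivial action on a tree with finite edge stabilizers without a global fixed vertex, $A$ and $B$ each must fix some vertex, say $v_A$ and $v_B$, in $T$. If $v_A = v_B$, then $G = \langle A, B\rangle$ would fix this vertex, contradicting our choice of $T$; so $v_A \neq v_B$, and the finite group $C$ fixes pointwise the geodesic $\gamma \subseteq T$ from $v_A$ to $v_B$.

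The contradiction is then to be obtained from a relator. Pick $r \in \mathcal{R}$ with normal form $r = x_1 x_2 \cdots x_{2n}$, syllables alternating in $A \setminus C$ and $B \setminus C$ and all of infinite order by hypothesis. Let $T_0$ denote the Bass--Serre tree of $A \ast_C B$, and consider the unique $(A \ast_C B)$-equivariant simplicial map $\phi : T_0 \to T$ sending the $A$-vertex of $T_0$ to $v_A$ and the $B$-vertex to $v_B$. On $T_0$, the element $r$ acts as a hyperbolic translation of length $2n$ along an axis $\ell$; on $T$ it acts trivially (since $r = 1$ in $G$). Hence $\phi(\ell)$ is bounded in $T$, forcing $\phi$ to fold or collapse edges along $\ell$.

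In the base case $n = 1$, the relation $r = a_1 b_1 = 1$ in $G$ directly yields $b_1 = a_1^{-1} \in A \cap B = C$, contradicting $b_1 \in B \setminus C$. For $n \geq 2$, the non-torsion hypothesis on syllables is to be used to obstruct the folding: any fold of $\phi$ at a vertex of $T_0$ stabilized by a translate of $A$ (resp.\ $B$) produces an element of that translate which fixes a nontrivial edge of $T$, hence lies in a finite subgroup of $G$; via the embedding $A, B \hookrightarrow G$, this element is torsion in $A$ (resp.\ $B$). Combined with the $C'(1/6)$ condition, which forces $\ell$ to be long relative to any cancellation in the word for $r$, this traps some syllable of $r$ inside a finite edge stabilizer, contradicting the non-torsion hypothesis. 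The main obstacle is executing this folding analysis precisely for $n \geq 2$, tracking how translates of $v_A$ and $v_B$ along $\ell$ can be identified by $\phi$ without collapsing an entire $r$-fundamental domain; a clean alternative is to invoke that $G$ is hyperbolic relative to $\{A, B\}$ under $C'(1/6)$ and appeal to Bowditch's characterization of one-endedness of relatively hyperbolic groups in terms of absence of splittings over finite or parabolic subgroups.
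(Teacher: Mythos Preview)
Your setup through the construction of the fixed vertices $v_A\neq v_B$ is exactly the paper's setup. The divergence, and the gap, is in the final step.

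You try to produce the contradiction via a folding analysis of an equivariant map $\phi\colon T_0\to T$ from the Bass--Serre tree of $A\ast_C B$, and you honestly flag that you do not know how to carry this out for $n\geq 2$. That gap is real. Your heuristic ``a fold at a vertex stabilized by a conjugate of $A$ produces an element of that conjugate fixing an edge of $T$'' is not justified: a fold identifies two edges of $T_0$ whose images in $T$ coincide, but the $T_0$-stabilizers of those edges are conjugates of the finite group $C$, and nothing forces a specific \emph{syllable} of $r$ to land in an edge stabilizer of $T$. The invocation of the $C'(1/6)$ condition at this point is also misplaced: once $A$ and $B$ are known to embed in $G$, the small cancellation hypothesis plays no further role in the paper's argument. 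Your suggested fallback through relative hyperbolicity and a Bowditch-type splitting criterion is not a proof either; in particular it would have to bypass the non-torsion hypothesis on the syllables of $r$, and the authors explicitly state they do not know whether that hypothesis can be removed.

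The paper closes the argument with a short, direct ping-pong on $T$ itself (their Lemma~\ref{lem:PingPong}), with no reference to $T_0$ or folding. The key observation you are missing is that an infinite-order element $a\in A$ has fixed-point set in $T$ equal to $\{v_A\}$ exactly: it fixes $v_A$, and it cannot fix any edge since edge stabilizers are finite. Hence if a point $v$ lies on the $v_B$-side of $v_A$, then $a\cdot v$ lies on a different branch at $v_A$, strictly farther from $v_B$; the symmetric statement holds for infinite-order $b\in B$. Taking $v_0$ to be the midpoint of the segment $[v_A,v_B]$ and applying the syllables of $r$ one at a time, each syllable strictly increases $\dist(\,\cdot\,,\{v_A,v_B\})$ and flips which side the point is on. Therefore $r\cdot v_0\neq v_0$, so $r$ does not act trivially on $T$, contradicting $r=1$ in $G$. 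This replaces your entire folding discussion with a one-paragraph displacement estimate.
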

In \cref{prop:OneEndedProducts:intro},   we do not know whether the bulleted hypothesis is necessary.  We also use the following proposition, that was communicated to us by Dani Wise, to prove our main result.
\begin{propx}[\cref{prop:wise2}]\label{prop:wise2:intro}
Let $G$ be a torsion-free, $2$-generated group. If $G$ is not a free group, then $G$ is one-ended.
\end{propx}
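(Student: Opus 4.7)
The plan is to combine Stallings' ends theorem with Grushko's theorem. By Stallings' theorem, a finitely generated group has more than one end if and only if it splits nontrivially over a finite subgroup. Since $G$ is torsion-free, every finite subgroup is trivial, so such a splitting is a splitting over the trivial group, i.e.\ $G$ is either $\mathbb{Z}$, or it decomposes as a nontrivial free product $G = A \ast B$, or it is an HNN extension over the trivial subgroup, which is just a free product of the base group with $\mathbb{Z}$.

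The goal is to show that each of these possibilities forces $G$ to be free. First, I would dispose of the low-end cases: if $G$ has $0$ ends then $G$ is trivial (hence free), and if $G$ is two-ended then, being torsion-free and virtually $\mathbb{Z}$, $G \cong \mathbb{Z}$ is free. In the remaining case, $G$ has infinitely many ends and, by Stallings, $G = A \ast B$ with both $A$ and $B$ nontrivial (HNN extensions over the trivial subgroup are free products with $\mathbb{Z}$, so are subsumed by this case).

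Now apply Grushko's theorem: $\operatorname{rank}(A \ast B) = \operatorname{rank}(A) + \operatorname{rank}(B)$. Since $G$ is $2$-generated, $\operatorname{rank}(A) + \operatorname{rank}(B) \leq 2$, and nontriviality of both factors forces $\operatorname{rank}(A) = \operatorname{rank}(B) = 1$. Each factor is a nontrivial subgroup of $G$, hence torsion-free, and a torsion-free group of rank $1$ is infinite cyclic. Therefore $A \cong B \cong \mathbb{Z}$ and $G \cong F_2$, a free group, contradicting the hypothesis. Consequently $G$ must be one-ended.

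The argument is essentially immediate once Stallings' theorem and Grushko's theorem are invoked; the only mild subtlety is remembering to eliminate the $0$-ended and $2$-ended cases separately (where $G$ is respectively the trivial group and $\mathbb{Z}$, both free) before applying Grushko to the infinite-ended case. No step is a genuine obstacle.
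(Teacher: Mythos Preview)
Your proof is correct and follows essentially the same route as the paper: Stallings' theorem reduces the non--one-ended case to a free product or HNN extension over the trivial subgroup, and Grushko's theorem forces each free factor to be cyclic, hence infinite cyclic by torsion-freeness. Your version is slightly more explicit in separating out the $0$-ended and $2$-ended cases, but the core argument is identical.
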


Theorem~\ref{thm:main} will follow from the previous stated results;    the fact that the classes of finitely generated groups in the statement are  non-empty by results of Brady, Leary and Nucinkis~\cite{BLN01}, Fluch and Leary~\cite{FL14}, and Bestvina and Mess~\cite{BeMe91} respectively; and the following abstraction.

\begin{thmx}\label{thm:abstract}
Let $\mathrm{Q} \subset \mathrm{P}$ be classes of finitely generated groups that are closed under taking finitely presented  subgroups. Suppose
\begin{enumerate}
    \item $\mathrm{P}$ contains the class of  torsion-free  finitely generated $C'(1/6)$ small cancellation groups.
    \item $\mathrm{P}$ is closed under taking relatively finitely presented $C'(1/6)$ small cancellation products,
    \item $\mathrm{Q}$ is closed under taking  amalgamated products over finite subgroups and HNN extensions over finite subgroups, and
\item   there is a hyperbolic group $G$ that is in $\mathrm{P}$ but not in $\mathrm{Q}$.
\end{enumerate}
Then there are continuously many one-ended non-quasi-isometric groups $\mathrm{P}$ that are not in $\mathrm{Q}$.
\end{thmx}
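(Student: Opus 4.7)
The plan is to combine the hyperbolic group $G$ from hypothesis~(4) with Bowditch's family $\{B_\alpha\}_{\alpha\in \mathbb{R}}$ of Theorem~\ref{thm:bowditch:B:alpha} via a relatively finitely presented $C'(1/6)$ small cancellation quotient of their free product. The resulting quotient is meant to inherit the continuum of pairwise-inequivalent taut spectra from the $B_\alpha$, while retaining an embedded copy of $G$ that blocks membership in $\mathrm{Q}$. For each $\alpha\in \mathbb{R}$ I would set
\[
H_\alpha \;=\; (G \ast B_\alpha)/\nclose{\mathcal{R}_\alpha},
\]
where $\mathcal{R}_\alpha$ is a finite symmetrized subset of $G \ast B_\alpha$ satisfying $C'(1/6)$ and chosen so that each $r \in \mathcal{R}_\alpha$ has a normal form containing no torsion elements of either factor; for instance, the symmetrization of a single word built by alternating large distinct powers of a torsion-free element of $G$ with large distinct powers of a torsion-free element of $B_\alpha$.

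Before applying the construction I would first reduce to the case that $G$ is one-ended.  Since $G$ is hyperbolic, Dunwoody's accessibility realizes it as a finite graph of groups with finite edge groups and vertex groups that are finite or one-ended.  Iterated application of hypothesis~(3) forces some vertex group $G_0$ to lie outside $\mathrm{Q}$; closure of $\mathrm{P}$ under finitely presented subgroups puts $G_0$ in $\mathrm{P}$, and a short additional argument using closure of $\mathrm{Q}$ under finitely presented subgroups rules out $G_0$ being finite.  Replacing $G$ by $G_0$, I would verify the four properties of $H_\alpha$ required to conclude.  Firstly, $H_\alpha \in \mathrm{P}$ by hypothesis~(2).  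Secondly, $H_\alpha$ is one-ended by Proposition~\ref{prop:OneEndedProducts:intro}, whose hypotheses hold because both $G$ and $B_\alpha$ are one-ended---the latter by Proposition~\ref{prop:wise2:intro}---and because the normal forms of $\mathcal{R}_\alpha$ avoid torsion.  Thirdly, $H_\alpha \notin \mathrm{Q}$: standard small cancellation theory for free products makes the canonical map $G \hookrightarrow H_\alpha$ injective, so if $H_\alpha \in \mathrm{Q}$ were true, then closure of $\mathrm{Q}$ under finitely presented subgroups would force $G \in \mathrm{Q}$, contradicting hypothesis~(4).  Fourthly, Theorem~\ref{thm:TautProducts} gives
\[
H(H_\alpha) \;\sim\; H(G)\cup H(B_\alpha) \;\sim\; H(B_\alpha),
\]
where the last equivalence uses that $G$ is finitely presented so that $H(G)$ is absorbed in the equivalence class of $H(B_\alpha)$. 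Hence $H(H_\alpha) \sim H(H_\beta)$ forces $\alpha=\beta$ by Theorem~\ref{thm:bowditch:B:alpha}, and quasi-isometry invariance of the taut spectrum \cite[Lemma~3]{Bow98} yields that the groups $H_\alpha$ are pairwise non-quasi-isometric.

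The principal obstacle I foresee is the reduction to a one-ended $G$: while hypothesis~(3) quickly yields a vertex group $G_0 \in \mathrm{P}\setminus\mathrm{Q}$ in the Dunwoody decomposition, arranging that $G_0$ is not finite needs care and may require separate treatment according to whether $\mathrm{Q}$ contains the finite vertex groups that arise.  A secondary, essentially routine obstacle is producing the relator sets $\mathcal{R}_\alpha$ simultaneously satisfying the $C'(1/6)$ condition and the torsion-avoidance hypothesis of Proposition~\ref{prop:OneEndedProducts:intro}, uniformly in $\alpha$.
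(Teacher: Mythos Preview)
Your proposal is correct and follows essentially the same approach as the paper: reduce via Dunwoody accessibility to a one-ended hyperbolic vertex group in $\mathrm{P}\setminus\mathrm{Q}$, form a small cancellation quotient of its free product with each Bowditch group $B_\alpha$, and verify membership in $\mathrm{P}$, non-membership in $\mathrm{Q}$ (via the embedded factor), one-endedness (via \cref{prop:OneEndedProducts:intro}), and pairwise non-quasi-isometry (via \cref{thm:TautProducts}). You are in fact slightly more careful than the paper in flagging the possibility of finite vertex groups in the Dunwoody decomposition, and your use of $C'(1/6)$ rather than the paper's $C'(1/12)$ for $\mathcal{R}_\alpha$ is the sharper choice given the hypotheses of the theorem.
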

\begin{proof}
First we argue that  there is a group $A$ in $\mathrm{P}$ that is not $\mathrm{Q}$ which is one-ended, finitely presented, and contains   elements of infinite order. This is a direct consequence Dunwoody's  accessibility~\cite{Dun85}. Since $G$ is hyperbolic, it is finitely presented and hence it splits as the fundamental group of a graph of groups with one-ended finitely presented vertex groups and finite edge groups. In fact,  each vertex group is a quasi-convex subgroup of $G$, and hence it is  hyperbolic. By hypothesis on $\mathrm{P}$, all vertex groups are in $\mathrm{P}$, but not all can be in  $\mathrm{Q}$, else $G$ would belong to $\mathrm{Q}$ by hypothesis (4), which is a contradiction. Therefore there is a one-ended hyperbolic group $A$ that is $\mathrm{P}$ but not in $\mathrm{Q}$, and such a group always contains elements of infinite order.

By \cref{thm:bowditch:B:alpha}, there is a continuous  collection of groups $\{B_\alpha \colon \alpha \in \mathbb{R}\}$ where each $B_\alpha$ is a torsion-free $2$-generated $C'(1/6)$ small cancellation group such that $B_\alpha$ is not quasi-isometric to $B_\beta$ if $\alpha\neq \beta$. In  particular, each $B_\alpha$ is a one-ended group (see \cref{prop:wise2:intro}).  By the first assumption on $\mathrm{P}$,  $B_\alpha \in \mathrm{P}$ for each $\alpha\in\mathbb{R}$. 

For each $\alpha\in \mathbb{R}$, let $\mathcal{R}_\alpha$ be a finite symmetrized subset  of $A\ast B_\alpha$ that satisfies the $C'(1/12)$ small cancellation condition and such that each $r\in \mathcal{R}_\alpha$ only involves elements of infinite order of $A$ and $B_\alpha$. For example, if $a\in A$ and $b_1,b_2\in B_\alpha$ are distinct elements of infinite order,  let $\mathcal{R}_\alpha$ be the symmetrized subset generated by $r\in A\ast B_\alpha$ given by  
\[ r=  (ab_1)ab_2(ab_1)^2ab_2(ab_1)^3\cdots ab_2(ab_1)^{12}ab_2.\] 

Let $G_\alpha$ be the quotient group $(A \ast B_\alpha)/ \nclose{\mathcal{R}}$.
By the second assumption on $\mathrm{P}$, the group $G_\alpha \in \mathrm{P}$ for each $\alpha \in \mathbb{R}$.
Since $G_\alpha$ contains  a subgroup isomorphic to $A$ (see for example~\cite[Ch. V. Cor. 9.4]{LySc01}), and $A\not\in \mathrm{Q}$, it follows that $G_\alpha \not\in \mathrm{Q}$. Since $A$ and $B_\alpha$ are one-ended, it follows that 
$G_\alpha$ is one-ended as well (see \cref{prop:OneEndedProducts:intro}). Since $A$ is finitely presented, Theorem~\ref{thm:TautProducts} implies that $H(G_\alpha)$ is equivalent to $H(B_\alpha)$ and hence $G_\alpha$ and $G_\beta$ are  quasi-isometric only if $\alpha= \beta$.
\end{proof}

We are now ready to prove  our main result:

\begin{proof}[Proof of Theorem~\ref{thm:main}]
For the first item, let $\mathrm{P}$ be the class of finitely generated groups $G$ such that $\cdfin(G)\leq 2$ and $\gdfin(G)\leq 3$. Let $\mathrm{Q}$ be the subclass of $\mathrm{P}$ defined by groups $G$ with $\gdfin G \leq 2$. Note that both $\mathrm{P}$ and $\mathrm{Q}$ are closed under taking finitely presented subgroups.  Since torsion-free $C'(1/6)$ small cancellation groups have geometric dimension at most two, they belong to $\mathrm{P}$. By \cref{pre-Machine}(1), the class $\mathrm{P}$ is closed under taking $C'(1/12)$ small cancellation products. From \cref{cor:closed:under:takings:graphs:intro} we get that $\mathrm{Q}$ is closed under taking amalgamated products  over finite groups and HNN extensions over finite subgroups.  Note that a group $G$  is in $\mathrm{P}$ but not in $\mathrm{Q}$ if and only if $\cdfin(G)=2$ and $\gdfin(G)=3$.
By a result  of Brady, Leary and Nucinkis~\cite{BLN01}, there is a hyperbolic group $\Gamma$ in $\mathrm{P}$ that is not in $\mathrm{Q}$.    Then Theorem~\ref{thm:abstract} implies the first statement of the theorem. 

For the second statement of the theorem, let $\mathrm{P}$ be the class of finitely generated groups $G$ such that   
\begin{itemize}
    \item $G$ has the small centralizers property, that is, the centralizer of any infinite cyclic subgroup of $G$ is virtually cyclic,  
    \item $G$ satisfies the ascending chain condition for finite subgroups, and
    \item $\cdvc(G)\leq 2$, $\gdvc(G)\leq 3$, $\cdfin(G)\leq 2$, and $\gdfin(G)\leq 3$.
\end{itemize}
Define $\mathrm{Q}$ as the subclass of groups $G$ that additionally satisfy $\gdvc(G)\leq 2$. It is not difficult to see that both $\mathrm{P}$ and $\mathrm{Q}$ are closed under taking subgrups.

Any torsion-free finitely generated $C'(1/6)$ small cancellation group $G$ 
has the small centralizers property, see~\cite{Truf74} or~\cite{Sey74}, and trivially satisfies the ascending chain condition for finite subgroups. Now, $G$ has geometric dimension two, so $\cdfin(G)\leq \gdfin(G)\leq 2$. Since $G$ has small centralizers and satisfies the ascending chain condition,
\[\gdvc(G)\leq \max\{2,\gdfin(G)\} \text{ and } \cdvc(G)\leq \max\{2,\cdfin(G)\},\]
see \cref{lemma:bound:dimensions}. Hence $\cdvc(G)\leq 2$ and $\gdvc(G)\leq 3$, and therefore  $G$ belongs to the class $\mathrm{P}$. That $\mathrm{P}$ is closed under taking $C'(1/6)$ small cancellation products is a direct consequence of \cref{pre-Machine}(2). A result of Fluch and Leary~\cite{FL14} shows that there is a hyperbolic group $\Gamma$ which satisfies $\cdfin(\Gamma)=2$, $\gdfin(\Gamma)=3$, $\cdvc(\Gamma)=2$, and $\gdvc(\Gamma)=3$. It is well-known that hyperbolic groups have small centralizers and satisfy the ascending chain condition for finite subgroups, see for example \cite{BH99}. Thus $\Gamma$ is in $\mathrm{P}$ but not in $\mathrm{Q}$. By \cref{cor:closed:under:takings:graphs:intro}, the class $\mathrm{Q}$ is closed under taking amalgamated products under over finite groups and HNN extensions over finite subgroups. Therefore the second statement follows from  \cref{thm:abstract}.

For the last statement, let $\mathrm{P}$ be the class of finitely generated groups $G$ such that $\cd_\mathbb{Q}(G)\leq 2$ and $\cd_\mathbb{Z}(G)\leq 3$.
Let $\mathrm{Q}$ be the subclass of $\mathrm{P}$ defined by groups $G$ with $\gd_\mathbb{Z}(G) \leq 2$. Then the argument follows from Theorem~\ref{thm:abstract} in an analogous way as in the previous paragraphs, in this case, invoking Theorem~\ref{pre-Machine}(3) and the existence of a hyperbolic group $G$ such that $\cd_\mathbb{Q}(G)=2$ and $\cd_\mathbb{Z}(G)=3$. The existence of such a group was described by Bestvina and Mess~\cite[Paragraph before Corollary~1.4]{BeMe91} and it uses their dimension formula and  a technique of Gromov and Davis-Januszkiewicz~\cite{DJ91}; there are other examples described by Dranishnikov~\cite[Corollary 2.3 ]{D99}.
\end{proof}

\subsection*{Other results.}

From the proof of \cref{thm:abstract}, observe that  hypothesis (4) of this theorem could be replaced with \emph{there is a finitely presented torsion-free group that is in $\mathrm{P}$ but not in $\mathrm{Q}$}. Hence the same type of argument proving \cref{thm:main} proves the following statement on the Eilenberg-Ganea conjecture~\cite{EG57}. 

\begin{corx}
If the class of groups  $\left\{G\in \mathrm{Groups} \colon \cd(G) = 2 \text{  and } \gd(G) = 3 \right\}$ contains a finitely presented group, then  it contains continuously many finitely generated one-ended non-quasi-isometric groups.
\end{corx}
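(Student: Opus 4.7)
The plan is to apply \cref{thm:abstract} with the refinement noted at the start of this subsection: since the only role of hyperbolicity in its proof is to ensure that the splitting produced by Dunwoody's accessibility has finitely presented vertex groups, hypothesis~(4) can be replaced by ``there is a finitely presented torsion-free group in $\mathrm{P}\setminus\mathrm{Q}$''. I would set $\mathrm{P}$ to be the class of finitely generated groups $G$ with $\cd(G)\le 2$ and $\gd(G)\le 3$, and $\mathrm{Q}$ the subclass with $\gd(G)\le 2$, so that $\mathrm{P}\setminus\mathrm{Q}$ is exactly the class of finitely generated Eilenberg--Ganea counter-examples. Since $\gd(G)<\infty$ forces $G$ to be torsion-free (finite subgroups have infinite $\gd$), every group in $\mathrm{P}$ is torsion-free, which lets us identify $\gd$ with $\gdfin$ throughout; in particular, both $\mathrm{P}$ and $\mathrm{Q}$ are closed under taking finitely generated subgroups, by monotonicity of $\cd$ and by restricting a free action on a contractible classifying space to the subgroup.

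Next I would verify the remaining hypotheses. Torsion-free $C'(1/6)$ small cancellation groups admit two-dimensional classifying spaces, so they lie in $\mathrm{P}$. To see that $\mathrm{P}$ is closed under relatively finitely presented $C'(1/12)$ small cancellation products $G=(A\ast B)/\nclose{\mathcal{R}}$, I would insist that $\mathcal{R}$ contain no proper powers so that $G$ remains torsion-free, and then invoke \cref{pre-Machine}(1) and~(3), obtaining $\gd(G)=\gdfin(G)\le\max\{\gd(A),\gd(B),2\}\le 3$ and $\cd(G)=\cd_{\dbZ}(G)\le\max\{\cd(A),\cd(B),2\}\le 2$. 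That $\mathrm{Q}$ is closed under amalgamated products and HNN extensions over finite subgroups follows in this torsion-free context from the $\gdfin$ bounds of \cref{cor:closed:under:takings:graphs:intro}.

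The refined hypothesis~(4) is precisely the hypothesis of the corollary: a finitely presented group with $\cd=2$ and $\gd=3$ is automatically torsion-free and lies in $\mathrm{P}\setminus\mathrm{Q}$. Dunwoody's accessibility, applied to any such group, produces a splitting as a graph of groups with finite edge stabilizers and one-ended finitely presented vertex groups; since $\mathrm{Q}$ is closed under such constructions, at least one vertex group $A$ is finitely presented, one-ended, and in $\mathrm{P}\setminus\mathrm{Q}$, and in particular contains elements of infinite order. The remainder of the proof of \cref{thm:abstract} then transfers verbatim: one builds $G_\alpha=(A\ast B_\alpha)/\nclose{\mathcal{R}_\alpha}$ with Bowditch's $\{B_\alpha\}$ and the explicit non-proper-power relator displayed there, applies \cref{thm:TautProducts} so that $H(G_\alpha)$ is equivalent to $H(B_\alpha)$, and invokes \cref{prop:OneEndedProducts:intro} for one-endedness (its bullet hypothesis is vacuous since $A$ and $B_\alpha$ are torsion-free). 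The main subtlety I expect is the insistence on relators without proper powers: this is what preserves torsion-freeness, which in turn is what allows the $\gdfin$ bound of \cref{pre-Machine}(1) to be read as a bound on the ordinary $\gd$; if torsion were created, $\gd$ would jump to infinity and membership in $\mathrm{P}$ would be lost.
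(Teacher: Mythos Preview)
Your proposal is correct and follows essentially the same approach the paper sketches: run the proof of \cref{thm:abstract} with the modified hypothesis~(4), using the same choices of $\mathrm{P}$ and $\mathrm{Q}$ as in the proof of \cref{thm:main}(1) but with $\cd,\gd$ in place of $\cdfin,\gdfin$. Your explicit attention to torsion-freeness---noting that $\gd<\infty$ forces it, and that the explicit relator in the proof of \cref{thm:abstract} is not a proper power so that the product stays torsion-free and $\gdfin$ can be read as $\gd$---is exactly the point one must check and is handled correctly.
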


We record the following corollary which is a direct consquence of Theorem~\ref{pre-Machine}.  This result describes a way to construct new groups  that exhibit an Eilenberg-Ganea phenomenon for the families of finite and virtually cyclic subgroups, as well as (hyperbolic) groups of rational cohomological dimension two and integral cohomological dimension three. Let $G$ be a group and $\calF$ be a family of subgroups of $G$. We say that $G$ is an \emph{$\calF$-Eilenberg-Ganea group} if $\gd_\calF(G)=3$ and $\cd_\calF(G)=2$.

\begin{corx}\label{Machine}
Let $A$ and $B$ be groups.  Let $\mathcal R$ be a finite symmetrized subset of $A\ast B$ that satisfies the $C'(1/12)$ small cancellation condition, and $G = (A\ast B)/ \nclose{\mathcal R}$. 
\begin{enumerate}
    \item If  $A$ is a $\fin$-Eilenberg-Ganea group,  and  $\cdfin(B) \leq 2$, then $G$ is a $\fin$-Eilenberg-Ganea group. 
    \item If $A$ is an $\calF$-Eilenberg-Ganea group for both $\calF=\fin$ and $\vcyc$, $\cd_{\calF}(B) \leq 2$ for both $\calF=\fin$ and $\vcyc$, and $A$ and $B$ are finitely generated, have small centralizers, and satisfy the chain ascending condition for finite subgroups, then $G$ is an $\vcyc$-Eilenberg-Ganea group.
    
    \item If $\cd_{\dbZ}(A) = 3$, $\cd_{\dbQ}(A) = 2$, $\cd_{\dbZ}(B) \leq 3$, and $\cd_{\dbQ}(B) \leq 2$, then $\cd_{\dbZ}(G) = 3$ and $\cd_{\dbQ}(G) = 2$.  
\end{enumerate}
If, in addition, $A$ and $B$ are one-ended and $\mathcal{R}$ satisfies the hypothesis of \cref{prop:OneEndedProducts:intro}, then $G$ is one-ended. Similarly if $A$ and $B$ are finitely presented, then $G$ is finitely presented.
\end{corx}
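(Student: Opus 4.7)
The plan is to apply \cref{pre-Machine} (taking the amalgamating subgroup $C$ to be trivial, so that $A\ast_C B = A\ast B$) to each of the three items, after first discharging its standing hypothesis that $G$ is not virtually free. This is automatic: the standard small cancellation embedding result (see e.g.\ Lyndon-Schupp, Ch.~V, Cor.~9.4) provides a copy of $A$ inside $G$, and in items (1) and (2) the hypothesis $\cdfin(A)=2$ forces $A$ not to be virtually free, while in item (3) the same conclusion is forced by $\cd_{\mathbb{Z}}(A)=3$; since subgroups of virtually free groups are virtually free, $G$ itself cannot be virtually free.

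For item (1), \cref{pre-Machine}(1) immediately yields $\cdfin(G)=\max\{\cdfin(A),\cdfin(B),2\}=2$ and $\gdfin(G)=\max\{3,\gdfin(B),2\}$. To conclude $\gdfin(G)=3$ we only need $\gdfin(B)\leq 3$, which follows from the Bredon analogue of the Eilenberg-Ganea theorem (L\"uck-Meintrup), $\gdfin(H)\leq\max\{3,\cdfin(H)\}$, combined with $\cdfin(B)\leq 2$.

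For item (2), \cref{pre-Machine}(2) transfers the small centralizer and ascending chain conditions from $A$ and $B$ to $G$, and delivers the sandwich bounds $\max\{\gdvc(A),\gdvc(B),2\}\leq \gdvc(G)\leq\max\{\gdfin(A),\gdfin(B),2\}$, and similarly for $\cdvc$. Using $\gdvc(A)=\gdfin(A)=3$, together with $\gdfin(B)\leq 3$ (from L\"uck-Meintrup applied to $\cdfin(B)\leq 2$), the sandwich collapses to $\gdvc(G)=3$; likewise $\cdvc(A)=\cdfin(A)=2$ and $\cdfin(B)\leq 2$ give $\cdvc(G)=2$. For item (3), \cref{pre-Machine}(3) applied with $R=\mathbb{Z}$ and $R=\mathbb{Q}$ yields $\cd_{\mathbb{Z}}(G)=\max\{3,\cd_{\mathbb{Z}}(B),2\}=3$ and $\cd_{\mathbb{Q}}(G)=\max\{2,\cd_{\mathbb{Q}}(B),2\}=2$ at once.

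Finally, the addendum on one-endedness is exactly \cref{prop:OneEndedProducts:intro} specialised to trivial $C$, and the addendum on finite presentability is the standard observation from small cancellation theory: if $A$ and $B$ are finitely presented and $\mathcal{R}$ is finite, then adjoining the relators $\mathcal{R}$ to the union of finite presentations of $A$ and $B$ produces a finite presentation of $G$. No step presents a serious obstacle; the only mildly delicate point is the invocation of the L\"uck-Meintrup bound to convert the cohomological hypothesis on $B$ into the required geometric upper bound.
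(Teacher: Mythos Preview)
Your proof is correct and is essentially what the paper has in mind: the paper states the corollary as ``a direct consequence of Theorem~\ref{pre-Machine}'' and gives no further argument, so you have simply filled in the details of that deduction. One small point worth noting is that the paper does not explicitly flag the L\"uck--Meintrup inequality $\gdfin(B)\leq\max\{3,\cdfin(B)\}$ as being needed to pass from the hypothesis $\cdfin(B)\leq 2$ to the bound $\gdfin(B)\leq 3$; you correctly identified this as the only nontrivial ingredient beyond \cref{pre-Machine} itself.
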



\subsection*{A couple of questions}
Since the taut spectrum of any two finitely presented groups are equivalent, it can only distinguish between quasi-isometry types of infinitely presented groups. Thus the following question is out of the scope of the present article.

\begin{quex}
For each of the classes in Theorem~\ref{thm:main}, does the class contain infinitely many non-quasi-isometric  finitely presented groups? 
\end{quex}

Let $n\geq0$ be an integer. A group is said to be \emph{virtually $\mathbb{Z}^n$} if it contains a subgroup of finite index isomorphic to $\mathbb{Z}^n$. For any group $G$, define the family \[\calF_n=\{H\leq\Gamma| H \text{ is virtually } \mathbb{Z}^r \text{ for some 
} 0\le r \le n \}.\] 
Observe that $\calF_0=\fin$ is the family of finite subgroups and $\calF_1=\vcyc$ is the family of virtually cyclic subgroups, these  are particularly relevant due to their connection with the Farrell-Jones and Baum-Connes isomorphism conjectures~\cite{FJ93,Lu05}.

\begin{quex}
Does the class of finitely generated groups \[\left\{G\in \mathcal{G} \colon \cd_{\calF_k}(G) = 2 \text{  and } \gd_{\calF_k}(G) = 3 \right\}\] satisfy the conclusion of Theorem~\ref{thm:main}? This class is non-empty by~\cite[Rem.~4.6]{SS20}.
\end{quex}

\subsection*{Organization} The rest of article  proves the results stated above. Each section corresponds to one of the results. 

Section~\ref{sec:Taut} is on the Taut spectrum of small cancellation quotients of free products. It contains an argument  that shows that the taut spectrum of a  free product  of finitely generated groups is equivalent to the union of the spectrums of the factors, $H(A\ast B) = H(A)\cup H(B)$, see \Cref{prop:TautProduct}. It concludes with  the proof of Theorem~\ref{thm:TautProducts}.

Section~\ref{sec:dimensional:bounds} is on the  proper and virtually cyclic dimensions of multi-ended groups, and in particular, it discusses the proof of  \Cref{cor:closed:under:takings:graphs:intro}.

Section~\ref{sec:Thn1.5} uses some of the results of the previous section to obtain upper bounds on the proper and virtually cyclic dimensions of small cancellation products of free groups, and in particular discusses  the proof of Theorem~\ref{pre-Machine}.

Section~\ref{sec:SmallCancellation} is on one-ended small cancellation quotients of free products, and 
contains the  proofs of \cref{prop:OneEndedProducts:intro} and \cref{prop:wise2:intro}.

\subsection*{Acknowledgements} We thank the referee for comments, suggestions and corrections that improved the quality of the manuscript. We also thank  Brita Nucinkis for pointing out the reference~\cite{FN13}, and  Dani Wise for communicating the argument proving~\Cref{prop:wise2}. Both authors wish to thank Sam Hughes for several comments and proof reading of the manuscript. The first author acknowledges funding by the Natural Sciences and Engineering Research Council of Canada, NSERC. The second author was supported by grant PAPIIT-IA101221.

\section{Taut spectrum of small cancellation quotients of free products} \label{sec:Taut}

\subsection{The taut loop length spectrum}
Let us recall the definition of Bowditch's  \emph{taut loop length spectrum} (or \emph{taut spectrum} for shorter) introduced in \cite{Bow98}, using the approach described in \cite{KLS20}. 

Let $\Gamma$ be a connected, simplicial graph. An \emph{edge loop of length $l$} in  $\Gamma$ is a sequence of $v_0,\dots ,v_l$ of vertices such that $v_0=v_l$ and $\{v_{i-1},v_i\}$ is an edge for $1\leq i\leq l$. For a graph $\Gamma$ and a fixed integer $l$, let $\Gamma_l$ denote the $2$-complex whose $1$-skeleton is the geometric realization of $\Gamma$, with one $2$-cell attached to each edge loop in $\Gamma$ of length strictly less than $l$. An edge loop of length $l$ is said to be \emph{taut} if it is not null-homotopic in $\Gamma_l$. The \emph{taut loop length spectrum} of $\Gamma$ is by definition
\[H(\Gamma)=\{n\in \dbN | \text{ there is a taut loop in $\Gamma$ of length $n$}\}.\]
As observed  in~\cite[Lemma~2.2]{KLS20},  an equivalent definition of the taut  loop length spectrum of $\Gamma$ is
\[H(\Gamma)=\{l\in \dbN|\ \pi_1(\Gamma_l)\to \pi_1(\Gamma_{l+1}) \text{ is not an isomorphism} \}\]
where the map between fundamental groups is the one induced by inclusion.

The vertex set of the graph $\Gamma$ is endowed with the combinatorial path metric $d_\Gamma$, that is, the distance between two vertices is the length of the shortest edge path between them.  For $k>0$, a function $f\colon X\to Y$ between metric spaces  is \emph{k-Lipschitz} if $d_Y(f(x),f(x'))\leq kd_X(x,x')$ for all $x,x'\in X$. Two graphs $\Gamma$ and $\Lambda$ are \emph{$k$-quasi-isometric} if there exist a pair of $k$-Lipschitz maps between vertex sets $\phi\colon V(\Gamma)\to V(\Lambda)$ and $\psi\colon V(\Lambda)\to V(\Gamma)$ such that $d_\Gamma(x,\psi\circ \phi(x))\leq k$ and $d_\Gamma(y,\phi\circ \psi(y))\leq k$ for all $x\in V(\Gamma)$ and $y\in V(\Lambda)$.  Two simplicial connected graphs are \emph{quasi-isometric} if their vertex sets with the combinatorial path metrics are $k$-quasi-isometric metric spaces for some $k>0$.

Two subsets $H$  and $H'$ of $\dbN$ are said to be $k$-related, and write $H\sim_k H'$, if for all $l\geq k^2+2k+2$, whenever $l\in H$ then there exists $l'\in H'$ such that $l/k\leq l'\leq lk$ and vice-versa.  Two subsets of $\mathbb{N}$ are equivalent if they are $k$-related for some $k$.

\begin{lemma}\cite[Lemma~3]{Bow98}
If two connected simplicial graphs $\Gamma$ and $\Lambda$ are $k$-quasi-isometric, then $H(\Gamma)$ and $H(\Lambda)$ are $k$-related.
\end{lemma}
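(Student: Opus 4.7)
The plan is to transport taut loops across the quasi-isometry using a push-forward construction on edge loops. Let $\phi\colon V(\Gamma)\to V(\Lambda)$ and $\psi\colon V(\Lambda)\to V(\Gamma)$ be the $k$-Lipschitz maps witnessing the quasi-isometry. First I would define a push-forward on edge loops: given $\gamma=(v_0,\dots,v_l)$ in $\Gamma$, choose for each $i$ a geodesic edge path from $\phi(v_i)$ to $\phi(v_{i+1})$ in $\Lambda$ (of length at most $k$) and concatenate them to obtain an edge loop $\phi_*(\gamma)$ of length at most $lk$; define $\psi_*$ analogously. Next I would prove a ``close loops are homotopic'' lemma asserting that $\gamma$ and $\psi_*\phi_*(\gamma)$ are homotopic in $\Gamma_M$ for any $M\geq k^2+2k+2$. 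The homotopy is built as a ladder whose quadrilateral cells pair each vertex $v_i$ of $\gamma$ with the corresponding hub $\psi\phi(v_i)$ on $\psi_*\phi_*(\gamma)$ by a geodesic rung of length at most $k$; the two rungs of each quadrilateral have length at most $k$, one horizontal side has length $1$ (an edge of $\gamma$), and the opposite side has length at most $k^2$ (the portion of $\psi_*\phi_*(\gamma)$ between two consecutive hubs, since $\psi\phi$ is $k^2$-Lipschitz on adjacent vertices), giving perimeter at most $k^2+2k+1$.

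With these tools in hand, suppose $l\in H(\Gamma)$ with $l\geq k^2+2k+2$ is witnessed by a taut edge loop $\gamma$, and, arguing by contradiction, assume that no integer in $[l/k,lk]$ belongs to $H(\Lambda)$. Using the alternate description $H(\Lambda)=\{m:\pi_1(\Lambda_m)\to\pi_1(\Lambda_{m+1})\text{ is not an isomorphism}\}$, together with the fact that these inclusion-induced maps are always surjective, I conclude that $\pi_1(\Lambda_{\lceil l/k\rceil})\to\pi_1(\Lambda_{\lfloor lk\rfloor+1})$ is an isomorphism. Since $|\phi_*(\gamma)|\leq lk$, the loop $\phi_*(\gamma)$ is null-homotopic in $\Lambda_{\lfloor lk\rfloor+1}$ and therefore already in $\Lambda_{\lceil l/k\rceil}$, via 2-cells bounded by loops of length at most $\lceil l/k\rceil-1$. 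Pushing the null-homotopy back via $\psi_*$ yields a null-homotopy of $\psi_*\phi_*(\gamma)$ in $\Gamma_l$, since each pushed-back filling loop has length at most $k(\lceil l/k\rceil-1)\leq l-1$. Combining with the close-loops homotopy from $\gamma$ to $\psi_*\phi_*(\gamma)$ in $\Gamma_l$ produces a null-homotopy of $\gamma$ in $\Gamma_l$, contradicting its tautness; the reverse direction then follows by exchanging the roles of $\Gamma$ and $\Lambda$.

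The main obstacle, I expect, will be the formalization of the push-back of null-homotopies: one must argue that a cellular null-homotopy of $\phi_*(\gamma)$ in $\Lambda_{\lceil l/k\rceil}$ can be translated into an honest cellular null-homotopy of $\psi_*\phi_*(\gamma)$ in $\Gamma_l$ while simultaneously controlling the length of every intermediate filling loop. This can be done either via a van Kampen diagram argument or, more cleanly, by using the second characterization of $H$ invoked above. The arithmetic identity $k(\lceil l/k\rceil-1)\leq l-1$, valid for positive integers $k,l$, is precisely what ensures that the pushed-back fillings land in $\Gamma_l$ rather than in $\Gamma_{l+1}$, and together with the perimeter bound $k^2+2k+1$ from the close-loops step it pinpoints the threshold $k^2+2k+2$ appearing in the definition of $k$-relatedness.
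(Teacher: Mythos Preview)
The paper does not supply a proof of this lemma; it is simply quoted from Bowditch~\cite[Lemma~3]{Bow98}, so there is no in-paper argument to compare against. Your sketch is the standard argument and is correct in outline: push the taut loop across via $\phi_*$, use the gap hypothesis to fill it in $\Lambda_{\lceil l/k\rceil}$, push the filling back via $\psi_*$, and combine with the ladder homotopy between $\gamma$ and $\psi_*\phi_*(\gamma)$. The two arithmetic facts you isolate---the ladder-cell perimeter bound $1+k^2+2k=(k+1)^2$ and the inequality $k(\lceil l/k\rceil-1)\leq l-1$---are exactly what pin down the threshold $k^2+2k+2$.

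The one place to tighten is the push-back step you already flag. The clean way to say it: take a disk diagram $D\to\Lambda_{\lceil l/k\rceil}$ for $\phi_*(\gamma)$, send each vertex of $D$ to $\Gamma$ via $\psi$, replace each edge by a fixed geodesic of length at most $k$ (making the choices on $\partial D$ agree with those used to define $\psi_*\phi_*(\gamma)$), and observe that the boundary of each $2$-cell of $D$ now maps to a loop in $\Gamma$ of length at most $k(\lceil l/k\rceil-1)\leq l-1$, hence bounds a disk in $\Gamma_l$. Gluing these disks along the pushed-forward $1$-skeleton yields an honest disk diagram in $\Gamma_l$ with boundary $\psi_*\phi_*(\gamma)$, which is what you need.
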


Let $G$ be a group and $S$ a finite generating set. Then we denote $H(G,S)=H(\Gamma(G,S))$ where $\Gamma(G,S)$ is the Cayley graph of $G$ with respect to $S$. Hence the taut spectrum of a group $G$ is well defined up to equivalence. If the finite generating set  of $G$ is clear from the context we just write $H(G)$ for the taut spectrum.

\subsection{The taut spectrum of a free product}

This subsection computes the taut spectrum of a free product. It is a warm-up  for the next subsection where we compute the taut spectrum of certain small cancellation quotients of free products.

\begin{proposition}\label{prop:TautProduct}
Let $A$ and $B$ be groups with finite generating sets $S_A$ and $S_B$ respectively. Then
\[H(A\ast B,S_A\sqcup S_B)=H(A,S_A)\cup H(B,S_B).\]
\end{proposition}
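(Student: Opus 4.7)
I plan to prove both inclusions separately, using two ingredients: (i) the retractions between Cayley graphs induced by the quotient homomorphisms $A\ast B\twoheadrightarrow A$ and $A\ast B\twoheadrightarrow B$ that kill the other factor, and (ii) the uniqueness of the syllable normal form in a free product. Throughout, write $\Gamma=\Gamma(A\ast B, S_A\sqcup S_B)$, $\Gamma_A=\Gamma(A,S_A)$, $\Gamma_B=\Gamma(B,S_B)$, and recall that $\Gamma_A$ and $\Gamma_B$ embed as subgraphs of $\Gamma$ (one copy at each coset of $A$, respectively $B$, in $A\ast B$).

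For the inclusion $H(A)\cup H(B)\subseteq H(A\ast B, S_A\sqcup S_B)$, by symmetry it suffices to treat $H(A)$. I would take a taut loop $\gamma$ of length $l$ in $\Gamma_A$ and view it inside $\Gamma$ via the canonical inclusion. The quotient map $A\ast B\twoheadrightarrow A$ induces a graph map $r\colon \Gamma\to \Gamma_A$ that acts as the identity on $S_A$-edges and collapses every $S_B$-edge to a point. Since edge collapses cannot increase loop length, any loop of length strictly less than $l$ in $\Gamma$ is sent by $r$ to a loop of length strictly less than $l$ in $\Gamma_A$; hence $r$ extends to a map $r_l\colon \Gamma_l\to (\Gamma_A)_l$ between the filled $2$-complexes, and this map is the identity on $(\Gamma_A)_l\subseteq \Gamma_l$. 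A null-homotopy of $\gamma$ in $\Gamma_l$ would then push down via $r_l$ to a null-homotopy in $(\Gamma_A)_l$, contradicting the tautness of $\gamma$, and so $l\in H(A\ast B, S_A\sqcup S_B)$.

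For the reverse inclusion, I would take a taut loop $\gamma$ of length $l$ in $\Gamma$, based without loss of generality at the identity, and let $w=u_1 u_2 \cdots u_m$ be the syllable decomposition of its label, where each $u_i$ is a maximal block of letters from one of $S_A$, $S_B$. Since $w=1$ in $A\ast B$ and the free product normal form is unique, if $m\geq 1$ some syllable $u_i$ must represent the identity in its factor. I would then show $m=1$ by contradiction: if $m\geq 2$, every syllable has length at most $l-1$, so the subpath $\beta$ of $\gamma$ labelled by a trivial $u_i$ is a based loop of length $|u_i|<l$ lying in a single coset copy of $\Gamma_A$ or $\Gamma_B$ inside $\Gamma$, hence filled in $\Gamma_l$ by definition. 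Excising $\beta$ produces a based loop $\gamma'$ of length $l-|u_i|<l$, which is itself null-homotopic in $\Gamma_l$, giving $\gamma\simeq \gamma'\simeq\mathrm{pt}$ in $\Gamma_l$ and contradicting tautness. Hence $m=1$, so after translating the basepoint $\gamma$ becomes a loop of length $l$ in $\Gamma_A$ or $\Gamma_B$, and the inclusion $(\Gamma_A)_l\hookrightarrow \Gamma_l$ (respectively $(\Gamma_B)_l\hookrightarrow \Gamma_l$) transfers tautness downward to give $l\in H(A,S_A)\cup H(B,S_B)$.

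The main technical point I expect to check carefully is that both the retraction $r$ and the syllable excision descend to the filled $2$-complexes $\Gamma_l$ and $(\Gamma_A)_l$; both come down to the monotonicity of loop length under edge-collapse and under removal of a subloop, so loops of length less than $l$ remain of length less than $l$ and therefore remain filled. I do not anticipate a serious obstacle beyond setting up these bookkeeping observations correctly.
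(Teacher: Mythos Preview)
Your proof is correct. The argument for $H(A\ast B)\subseteq H(A)\cup H(B)$ is essentially identical to the paper's: reduce to a single syllable by excising a trivial syllable (a subloop of length $<l$, hence filled), then use that a null-homotopy in $(\Gamma_A)_\ell$ includes into $\Gamma_\ell$.

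For the inclusion $H(A)\cup H(B)\subseteq H(A\ast B)$ you take a genuinely different route. The paper proves that $\Gamma_\ell(B)\hookrightarrow \Gamma_\ell(G)$ is $\pi_1$-injective by introducing an intermediate complex $\Gamma'_\ell(G)$ built from homogeneous cycles only, and identifying its quotient $\Gamma'_\ell(G)/G$ with the wedge $X_\ell(A)\vee X_\ell(B)$. Your retraction argument is shorter and more elementary: the projection $A\ast B\twoheadrightarrow A$ gives a left inverse $(\Gamma_A)_\ell\hookrightarrow\Gamma_\ell\xrightarrow{r_\ell}(\Gamma_A)_\ell$ on the level of filled complexes, so $\pi_1$-injectivity is automatic. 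The only wrinkle is that collapsing $S_B$-edges can send a short edge loop in $\Gamma$ to a \emph{degenerate} path in $\Gamma_A$ (with repeated consecutive vertices), which is not literally an edge loop; but after deleting the constant steps one obtains an honest edge loop of length $<l$, so $r(\alpha)$ is still null-homotopic in $(\Gamma_A)_\ell$ and the extension over $2$-cells goes through.

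What the paper's approach buys is that it is the one that generalizes to \Cref{thm:taut:small:cancellation}. For a small cancellation quotient $G=(A\ast B)/\nclose{\mathcal R}$ there is in general no homomorphism $G\to A$ splitting the inclusion, so your retraction does not survive; the paper's wedge-decomposition argument, by contrast, adapts directly (the quotient complex is now the wedge with extra $2$-cells coming from $\mathcal R$, and these still satisfy $C'(1/6)$, so the factor inclusions remain $\pi_1$-injective). In that sense the proposition is set up as a warm-up for the method used later, whereas your argument is optimized for the free-product case alone.
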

\begin{proof}
A cyclic word $w$ in the alphabet $S_A\cup S_B$ represents the identity in $G$ only if there is a syllable (a maximal subword in $S_A$ or $S_B$) of $w$ that represents the identity.
Therefore if $w$ is a reduced cyclic word of length $m$ that represents the identity then either
\begin{enumerate}
    \item $w$ is an $A$-word or a $B$-word; or
    \item $w$ is equal  to a product $\prod r_i^{g_i}$ in $G$ where each $r_i$ is an $A$-relation or $B$-relation of length strictly less than $m$. 
\end{enumerate}

In the Cayley graph $\Gamma(G)$, a cycle is called homogeneous if its label is either an $A$-word or a $B$-word. Let $\Gamma_\ell'(G)$ be the complex with $1$-skeleton the Cayley graph of $G$ and a $2$-cell for each homogenous cycle of length less than $\ell$. A consequence of the   above observations is that the inclusion
\[ \Gamma_\ell'(G) \hookrightarrow \Gamma_\ell(G) \]
 is $\pi_1$-injective (and hence $\pi_1$-isomorphism). Indeed, if $\gamma$ is a cycle of length less than $\ell$ in $\Gamma_\ell(G)$ then there is disk diagram in $\Gamma'_\ell(G)$ for $\gamma$. 
 
 Let $\ell>0$ and consider the inclusion
 \[ \Gamma_\ell(B) \to \Gamma_\ell'(G).\]
 Let us show   that this inclusion is $\pi_1$-injective. 
 Let $\gamma\colon S^1\to \Gamma_\ell(B)$ be a closed path labeled by a $B$-word, and suppose that  $\gamma$ that is null homotopic in $\Gamma_\ell'(G)$. We claim that $\gamma$ is null-homotopic in $\Gamma_\ell(B)$.  Observe that the quotient complex $\Gamma_\ell'(G)/G$ is isomorphic to a wedge $X_\ell(A)\vee X_\ell(B)$, where $X_\ell(A)=\Gamma_\ell(A)/A$ and $X_\ell(B)=\Gamma_\ell(B)/B$. Then the closed path $\gamma\colon S^1\to X_\ell(A)\vee X_\ell(B)$  factors through the $\pi_1$-injective  inclusion $X_\ell(B) \hookrightarrow X_\ell(A)\vee X_\ell(B)$. It follows that there is a disk diagram $D\to X_\ell(B)$ for $\gamma\colon S^1 \to X_\ell(B)$ which can be lifted to a map $D\to \Gamma_\ell(B)$. Hence $\gamma$ is null-homotopic in $\Gamma_\ell(B)$. It follows that the inclusion 
\[\Gamma_\ell(B) \to \Gamma_\ell(G)\]
is a $\pi_1$-injective map. Thus, for each $\ell$, we have the following commutative diagram 
\[
\xymatrix{
\pi_1(\Gamma_\ell(B))\ar[d]\ar[r] & \pi_1(\Gamma_{\ell+1}(B))\ar[d]\\
\pi_1(\Gamma_\ell(G))\ar[r] & \pi_1(\Gamma_{\ell+1}(G))
}
\]
with injective vertical maps. The diagram shows that $H(B) \subseteq H(G)$. A completely analogous argument shows that $H(A) \subseteq H(G)$, and therefore $H(A)\cup H(B) \subseteq H(G)$.


Next we prove $H(A)\cup H(B) \supseteq H(G)$. Let $\ell\in H(G)$. Let $\gamma$ be a closed path in $\Gamma_\ell(G)$ of length $\ell$ that is not null-homotopic. Suppose that the label of $\gamma$ is a word $w$. First let us argue that $\gamma$ is homogeneous, i.e, $w$ has only one syllable. Suppose that $w$ has more than one syllable. If $w$ represents the identity in $A\ast B$, then it contains a syllable that represents the identity. This gives a proper subpath of $\gamma$ that is a closed loop of length less than $\ell$. It follows that  $\gamma$ is homotopic to a closed path of shorter length in $\Gamma_{\ell}(G)$, and therefore $\gamma$ is null-homotopic, a contradiction. We have that $\gamma$ is homogeneous. Suppose  $\gamma$ has label a $B$-word of length $\ell$. Since   $\Gamma_\ell(B) \hookrightarrow \Gamma_\ell(G)$, by translating with an element of $G$, assume that $\gamma$ is a path in $\Gamma_\ell(B)$. Then  $\gamma$ is a closed path of length $\ell$ that is not null homotopic in $\Gamma_\ell(B)$ and hence $\ell\in H(B)$. Analogously we prove that if $\gamma$ has label an $A$-word, then $\ell\in H(A)$.
\end{proof}

\subsection{The taut spectrum of a small cancellation product}

Consider groups $A$ and $B$ and let   $\mathcal{R}$ be a subset of $A\ast_C B$ where $C$ is a common finite subgroup. We use the definitions, language and conventions by Lyndon and Schupp~\cite[Ch. V.]{LySc01}. In particular, a symmetrized subset   $\mathcal{R}$ of $A\ast_C B$ that satisfies the $C'(1/6)$ small cancellation is non-empty and every $r\in \mathcal{R}$ has normal form with at least seven alternating letters from  $A$ and $B$, see~\cite[Page 286]{LySc01}. We will use the following well known result in this section: 
 
\begin{theorem}\cite[Ch. V. Theorem 11.2]{LySc01}\label{thm:filling} 
Let $A$ and $B$ be groups, let $C$ be a common finite group, and let $R$ be a finite symmetrized subset of $(A \ast_C B)$ that satisfies the $C'(1/6)$ small cancellation condition. Let $G=(A\ast_C B)/\nclose{\mathcal R}$.
The natural homomorphisms $A \rightarrow G$ and $B \rightarrow G$ are injective, and we regard
    $A$ and $B$ as subgroups.
\end{theorem}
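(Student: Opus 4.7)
The plan is to reduce the statement to the Greendlinger-type lemma for $C'(1/6)$ small cancellation quotients of free products with amalgamation (the technical heart of Lyndon--Schupp, Ch.~V), which asserts: if $w \in A \ast_C B$ has a cyclically reduced normal form, is nontrivial in $A \ast_C B$, and represents the identity in $G = (A\ast_C B)/\nclose{\mathcal R}$, then the normal form of $w$ contains a subword that constitutes \emph{more than half} of the normal form of some $r \in \mathcal R$.

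Given this lemma, the theorem is immediate. Suppose for contradiction that $a \in A \setminus \{1\}$ satisfies $a = 1$ in $G$. Viewed in $A \ast_C B$, the normal form of $a$ is the single $A$-syllable $a$. By the $C'(1/6)$ hypothesis, as recalled in the paragraph just before the statement, every $r \in \mathcal R$ has normal form of length at least seven alternating syllables, so more than half of $r$ consists of at least four alternating syllables and in particular contains syllables both from $A$ and from $B$. No such subword can appear inside a one-syllable word, contradicting the Greendlinger conclusion. Hence $a = 1$ in $A$, so $A \to G$ is injective; the argument for $B \to G$ is entirely symmetric. A minor point: one must first cyclically reduce, which for a single $A$-syllable is automatic, and separately verify that a nontrivial $a \in A$ really does equal $1$ in $A \ast_C B$ only when it lies in $C$, but the image of $C$ in $G$ is already handled since $C$ embeds via either factor.

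The remaining ingredient, the Greendlinger lemma itself, is what I expect to be the main obstacle. Following Lyndon--Schupp one works with $R$-diagrams (van Kampen diagrams) over $A \ast_C B$: take a minimal diagram $D$ whose boundary label is the normal form of $w$ and whose $2$-cells are labelled by elements of $\mathcal R$, and combine the $C'(1/6)$ piece hypothesis with the Euler characteristic identity $\chi(D)=1$ to extract a boundary face $\Pi$ whose label shares more than half its length with $\partial D$. The care required here --- alternating normal forms, pieces that arise within amalgamated $C$-syllables rather than between them, and preservation of minimality under diamond and consolidation moves --- is exactly what Chapter~V of Lyndon--Schupp is designed to address; once that machinery is in place, the single-syllable obstruction above closes the argument.
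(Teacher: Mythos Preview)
The paper does not give its own proof of this statement: it is quoted verbatim as \cite[Ch.~V, Theorem~11.2]{LySc01} and used as a black box. So there is nothing in the paper to compare your argument against beyond the citation itself.

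That said, your outline is exactly the Lyndon--Schupp proof. The Greendlinger-type lemma you invoke is their Theorem~9.3 in Chapter~V (the paper itself cites this result elsewhere, in the proof of Lemma~\ref{lem:Dehn}), and the one-syllable-versus-at-least-four-syllables contradiction is precisely how the embedding of the factors is deduced from it. One small wrinkle in your final sentence: a nontrivial $a\in A$ is \emph{never} trivial in $A\ast_C B$, regardless of whether $a\in C$; the only issue with $a\in C$ is that its normal form may have length~$0$ in the amalgamated-product sense, so one should check that the Greendlinger lemma still yields a contradiction (it does, since the required subword has length at least~$4$), or simply observe that $a\in C\setminus\{1\}$ already lies in $A\setminus\{1\}$ and argue uniformly. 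Otherwise your reduction is clean and matches the source the paper cites.
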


In the rest of the section we prove the following statement. We divert the proof to the end of the section.

\begin{theorem}\label{thm:taut:small:cancellation}Let $A$ and $B$ groups and let $\mathcal{R}$ be a finite set of elements of $A\ast B$ satisfying the $C'(1/6)$ small cancellation condition. Let $G=(A\ast B)/\langle\langle \mathcal R \rangle\rangle$, then 
$H(A)\cup H(B)$ and $H(G)$ are $k$-related for some $k$. 
\end{theorem}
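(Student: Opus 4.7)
The plan is to mimic the proof of \Cref{prop:TautProduct}, with two modifications to accommodate the relators: the intermediate complex used there is enlarged to include $2$-cells for $\mathcal R$, and where the free product case used the elementary $\pi_1$-injectivity of $X_\ell(B)\hookrightarrow X_\ell(A)\vee X_\ell(B)$ we instead invoke \Cref{thm:filling}. Let $M$ be the maximum length in $S_A\sqcup S_B$ of an element of $\mathcal R$, which is finite because $\mathcal R$ is, and define $\Gamma^\flat_\ell(G)$ to be the $2$-complex with $1$-skeleton $\Gamma(G,S_A\sqcup S_B)$ whose $2$-cells are (a) every homogeneous loop of length $<\ell$, and (b) every $G$-translate of a relator $r\in\mathcal R$; when $\ell>M$ this is a subcomplex of $\Gamma_\ell(G)$. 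The first step is to show that for $\ell>M$ the inclusion $\Gamma^\flat_\ell(G)\hookrightarrow\Gamma_\ell(G)$ is a $\pi_1$-isomorphism, which reduces to checking that every loop of length $<\ell$ in $\Gamma(G)$ is null-homotopic in $\Gamma^\flat_\ell(G)$. I would prove this by induction on length: if the label $w$ already represents $1$ in $A\ast B$, the Bass--Serre-style argument of \Cref{prop:TautProduct} fills it with homogeneous $2$-cells of length $\le |w|<\ell$; otherwise, Greendlinger's Lemma produces a subword of a cyclic permutation of $w$ making up more than half of some $r\in\mathcal R$, so replacing it with the shorter complement via a relator $2$-cell of length $\le M<\ell$ yields a shorter loop that is null-homotopic by induction.

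For the inclusion $H(A)\cup H(B)\subseteq H(G)$, I would adapt the wedge argument of \Cref{prop:TautProduct}. The quotient $\Gamma^\flat_\ell(G)/G$ is the wedge $X_\ell(A)\vee X_\ell(B)$ with additional $2$-cells attached along (lifts of) the relators in $\mathcal R$, and its $\pi_1$ is the quotient of $\pi_1(X_\ell(A))\ast\pi_1(X_\ell(B))$ by the normal closure of $\mathcal R$. Since the quotient maps $\pi_1(X_\ell(A))\twoheadrightarrow A$ and $\pi_1(X_\ell(B))\twoheadrightarrow B$ preserve syllable equality, any piece in $\pi_1(X_\ell(A))\ast\pi_1(X_\ell(B))$ remains a piece in $A\ast B$, so $\mathcal R$ still satisfies the $C'(1/6)$ condition in the larger free product; hence \Cref{thm:filling} gives that $\pi_1(X_\ell(B))$ embeds into $\pi_1(\Gamma^\flat_\ell(G)/G)$. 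Lifting a disk diagram through the quotient map as in \Cref{prop:TautProduct} yields the $\pi_1$-injectivity of $\Gamma_\ell(B)\hookrightarrow\Gamma^\flat_\ell(G)$, and combined with Step~1, the naturality square of inclusion-induced $\pi_1$ maps gives $H(B)\subseteq H(G)$; the symmetric argument gives $H(A)\subseteq H(G)$.

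For the reverse inclusion $H(G)\subseteq H(A)\cup H(B)$, let $\ell>M$ with $\ell\in H(G)$ and fix a taut loop $\gamma$ of length $\ell$. Then $\gamma$ is null-homotopic in $\Gamma_{\ell+1}(G)$ and hence, by Step~1, in $\Gamma^\flat_{\ell+1}(G)$, so it admits a van Kampen diagram $D$ over the latter with $2$-cells of length $\le \ell$. Since relator $2$-cells have length $\le M<\ell$, the absence of a length-$\ell$ homogeneous $2$-cell in $D$ would place $D$ inside $\Gamma^\flat_\ell(G)\subseteq\Gamma_\ell(G)$, contradicting tautness of $\gamma$. So $D$ contains at least one homogeneous loop of length exactly $\ell$, which is an $A$- or $B$-loop. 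If every such loop were non-taut in the corresponding factor, each could be replaced in $D$ by a filling using $2$-cells of length $<\ell$ in $\Gamma_\ell(A)$ or $\Gamma_\ell(B)$; iterating strictly decreases the number of length-$\ell$ $2$-cells and eventually produces a diagram inside $\Gamma^\flat_\ell(G)\subseteq\Gamma_\ell(G)$, again contradicting tautness. Therefore some length-$\ell$ homogeneous loop in $D$ is taut in $\Gamma(A)$ or $\Gamma(B)$, and $\ell\in H(A)\cup H(B)$.

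The two inclusions combine to give $H(G)=H(A)\cup H(B)$ on integers greater than $M$, which suffices to make them $k$-related for any $k$ with $k^2+2k+2>M$. The main technical obstacle lies in the second step: verifying that the $C'(1/6)$ condition genuinely persists for the chosen lifts of $\mathcal R$ in $\pi_1(X_\ell(A))\ast\pi_1(X_\ell(B))$, and that disk diagrams in $\Gamma^\flat_\ell(G)/G$ can be faithfully lifted through the quotient map when $A$ or $B$ has torsion (so the $G$-action on the $2$-cells of $\Gamma^\flat_\ell(G)$ need not be free). Both points are the only places where the free product argument genuinely requires new input, and both amount to careful bookkeeping of syllable structure.
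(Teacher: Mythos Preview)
Your overall strategy matches the paper's: define the intermediate complex (your $\Gamma^\flat_\ell(G)$ is the paper's $\Gamma'_\ell(G)$), prove $\Gamma_\ell(B) \hookrightarrow \Gamma'_\ell(G)$ is $\pi_1$-injective via the quotient/wedge argument and persistence of $C'(1/6)$ (this is exactly \cref{lem:happy2}, and the concern you flag is handled there just as you suggest), and then relate $\Gamma'_\ell$ to $\Gamma_\ell$. But Step~1 has a genuine gap. Greendlinger's Lemma for free products (\cite[Ch.~V, Thm.~9.3]{LySc01}) measures ``more than half of $r$'' in \emph{normal-form} (syllable) length, not in word length over $S_A \sqcup S_B$; knowing $|t|<|s|$ as syllable counts does not give $|T|_X < |S|_X$ as word lengths, since a single syllable of $t$ might require a long $S_A$- or $S_B$-word, and the boundary label of your relator $2$-cell need not agree letter-for-letter with the subword $S$ of $W$. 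The paper addresses precisely this: it first enlarges the generating sets so that every syllable occurring in some $r\in\mathcal R$ becomes a single letter, and then proves a careful Dehn-algorithm statement (\cref{lem:Dehn}) under that normalisation. Even so, the paper reduces to words with \emph{geodesic} syllables before applying Dehn, and that reduction costs homogeneous $2$-cells of length up to $2\ell$; this is why \cref{lem:happy}, \cref{lem:happy3} and \cref{lem:happy05} carry a factor of~$2$, and why the paper only concludes $2$-relatedness rather than the exact equality you claim.

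Your Step~3 is a genuinely different route from the paper's \cref{lem:happy04}. The paper works directly on the taut loop $\gamma$: either every syllable is geodesic and Dehn shortens $\gamma$ (contradiction), or some syllable is non-geodesic and that syllable yields a taut factor-loop of length in $[\ell,2\ell)$. Your diagram-replacement argument---fill $\gamma$ over $\Gamma^\flat_{\ell+1}$, locate a length-$\ell$ homogeneous $2$-cell, and show some such cell must be taut in its factor---is correct and arguably cleaner. In fact, if you repair Step~1 by enlarging the generating sets and then, instead of making all syllables geodesic, only converting the \emph{matched} Greendlinger syllables to their single-letter forms (via homogeneous $2$-cells of length $|W_l|_X+1 \le |W|_X < \ell$), you avoid the paper's factor of~$2$ entirely and obtain the stronger statement $H(G)=H(A)\cup H(B)$ for all sufficiently large $\ell$ in the enlarged generating sets. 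So the gap in Step~1 is real but repairable, and once fixed your route actually yields a sharper conclusion than the paper's.
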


The small cancellation conditions over free products are expressed  in terms of the length of normal forms. On the other hand, the definition of the taut spectrum is in terms of the word metric with respect to finite generating sets. \cref{lem:Dehn} below makes the connection between the two norms. It is essentially a translation of Theorem 9.3 in \cite[p.~293]{LySc01}.

Let $A$ and $B$ be groups with finite symmetric generating sets $X_A$ and $X_B$, and consider the generating set $X=X_A\sqcup X_B$ of $A\ast B$. We will be considering words over the alphabet $X$. For $g\in A\ast B$, denote by $|g|$ the length of its normal form. If $g\in A\ast B$ has normal form $g_1 g_2 \cdots g_n$ then each element $g_i$ is called a \emph{syllable} of $g$. If $f,g\in A\ast B$ the product $fg$ is reduced if the last syllable of $f$ and the first syllable of $g$ are in distinct factors. 

For a word $W$ over $X$,  let $|W|_X$ denote  the length of the word. A word $W$ over the alphabet $X$ is \emph{cyclically reduced} if when considered as a cyclic word, it is reduced. A word over the alphabet $X_A$ (resp. $X_B$) is geodesic if there is no  shorter word over $X_A$ (resp. $X_B$) that represents the same element of $A$ (resp. $B$). An $A$-syllable (resp. $B$-syllable) of a cyclically reduced  word is a maximal subword that uses only letters from $X_A$ (resp. $X_B$). An  $A$-syllable is geodesic if it is geodesic as a word over $X_A$. A geodesic $B$-syllable is defined analogously. A  word over $X$ is geodesic if all its syllables are geodesic.

If $U$ and $V$ are reduced words, we say that their concatenation $U\cdot V$ does not combine syllables if the last letter of $U$ and the first letter of $V$ are not both letters in $X_A$ or $X_B$. In particular, if $u,v \in A\ast B$ are the elements represented by $U$ and $V$, then the expression $uv$ is reduced.

Let $\calR$ be a nonempty symmetrized finite subset of $A\ast B$ that satisfies the $C'(\lambda)$ small cancellation condition with $\lambda\leq 1/6$, and let $N=\nclose{\calR}$ be the normal subgroup generated by $\calR$. Let 
\[M = \max\{ |W|_X \colon \text{$W$ is a syllable of a  geodesic word representing $r\in \calR$} \}.\]
Observe that $M\geq1$ is well defined since $X_A$, $X_B$ and $\calR\neq\emptyset$ are finite sets.   

\begin{lemma}[Sufficient condition for Dehn Algorithm]\label{lem:Dehn}
Suppose that $1\geq 3\lambda(M+1)$.
Let $W$ be a non-trivial reduced cyclic $X$-word that represents an element of  $N$. Suppose that each syllable of $W$ is a geodesic word. Then there is a cyclically reduced word $R$ representing an element of $\calR$ such that
\begin{enumerate}
    \item every syllable of $R$ is geodesic, 
    \item $W$ equals the concatenation of three words $U\cdot S\cdot V$ and the concatenation does not combine syllables,
    \item  $R$ equals the  concatenation $S\cdot T$, where $S$ is the word of the previous item,  and the concatenation does not combine syllables, and
    \item $|S|_X>|T|_X$.
\end{enumerate}
\end{lemma}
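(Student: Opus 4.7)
The plan is to reduce to the Greendlinger-type Dehn statement for symmetrized subsets of free products (Lyndon--Schupp~\cite[Ch.~V]{LySc01}), which is phrased in the normal-form length, and then upgrade its output to a statement in the $X$-word length using the numerical hypothesis $1 \geq 3\lambda(M+1)$.

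First I would pass to the normal form: because $W$ is a nontrivial cyclically reduced $X$-word all of whose syllables are geodesic, its syllable decomposition $W = W_1 \cdots W_n$ realizes the normal form $w = w_1 \cdots w_n$ of the element of $A \ast B$ that $W$ represents; in particular $w$ is cyclically reduced as a normal form and nontrivial in $N$. Applying Greendlinger's lemma for $C'(\lambda)$ symmetrized subsets of $A \ast B$ to $w$, and possibly replacing $W$ by a cyclic conjugate so that the matched subword does not wrap around, one obtains some $r \in \mathcal{R}$ with normal form $r = s_1 \cdots s_k \, t_1 \cdots t_{m-k}$ (where $m = |r|$) such that $s_i$ equals the $(a{+}i{-}1)$-th syllable of $w$ for $1 \leq i \leq k$ and $k > (1-3\lambda)\, m$. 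Crucially the matched portion consists of \emph{full} syllables of $w$. I would then assemble $R$ by taking $S = W_a \cdots W_{a+k-1}$ from the geodesic syllables already appearing in $W$, choosing arbitrary geodesic $X$-words $T_j$ for the remaining syllables $t_j$ of $r$, and setting $T = T_1 \cdots T_{m-k}$ and $R = S \cdot T$. Then $R$ represents (a cyclic conjugate of) $r$, every syllable of $R$ is geodesic, and the decompositions $W = U \cdot S \cdot V$ and $R = S \cdot T$ both break along genuine syllable boundaries, which yields conclusions (1)--(3).

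The step I expect to require the most care is conclusion (4), the strict inequality $|S|_X > |T|_X$, which is precisely where the hypothesis $1 \geq 3\lambda(M+1)$ comes in. Each syllable contributes at least one letter, so $|S|_X \geq k$, while by definition of $M$ each $T_j$ has length at most $M$, so $|T|_X \leq (m-k)\, M$. Rewriting the hypothesis as $(1-3\lambda)(M+1) \geq M$ and combining with $k > (1-3\lambda)\, m$ gives
\[ k(M+1) > (1-3\lambda)(M+1)\, m \geq Mm, \]
equivalently $k > (m-k)M$, whence $|S|_X \geq k > (m-k)M \geq |T|_X$. The delicate point throughout is to ensure that the subword produced by the normal-form Greendlinger lemma aligns with the $X$-syllable structure of $W$ so that $S$ can be read off from $W$ directly; this is exactly what the hypothesis that each syllable of $W$ is geodesic makes possible, and it explains why the stronger bound $3\lambda(M+1) \leq 1$ on $\lambda$ (rather than merely $\lambda \leq 1/6$) is needed.
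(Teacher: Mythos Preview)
Your proposal is correct and follows essentially the same route as the paper: both reduce to the normal-form Greendlinger lemma \cite[Ch.~V, Thm.~9.3]{LySc01} to obtain $|s| > (1-3\lambda)|r|$, read off $S$ from the geodesic syllables of $W$, choose geodesic words for the remaining syllables to build $T$, and then use the bound $1 \geq 3\lambda(M+1)$ together with $|S|_X \geq |s|$ and $|T|_X \leq M|t|$ to deduce $|S|_X > |T|_X$. Your algebraic manipulation for step (4) is a slight repackaging of the paper's chain of inequalities, but the content is identical.
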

\begin{proof}  

Observe that $W$ is neither an $A$-word nor a $B$-word, since $\nclose{R}$ intersects trivially both $A$ and $B$.

Since $W$ represents a non-trivial element $w$ of $\nclose{R}$, then there is $r\in R$ such that $r=s t$ in reduced form, $w=usv$ in reduced form, and $|s|>(1-3\lambda)|r|$, see~\cite[Chapter 5, Theorem 9.3]{LySc01}. Since $|r|=|s|+|t|$,  the last inequality implies
\[ |s|> \frac{1-3\lambda}{3\lambda}|t|.\]
On the other hand,  $W$ is a concatenation of words  $W=U\cdot S\cdot V$ that does not combine syllables and $U, S, V$ represent the elements $u,s,v$ of $A\ast B$ respectively. Since $r=st$ is a reduced expression, then for any geodesic word $T$ representing $t$, 
the concatenation $R:=S\cdot T$ does not combine syllables and is a word that represents $r$. Since $1\geq 3\lambda(M+1)$, we have that $ 1-3\lambda  \geq 3\lambda M$, and therefore 
\[|S|_X \geq |s| > \frac{1-3\lambda}{3\lambda}|t| \geq \frac{1-3\lambda}{3\lambda M} |T|_X \geq |T|_X. \qedhere \]
\end{proof}


From now on, we assume  $\mathcal{R}$  satisfies $C'(\lambda)$ with $\lambda\leq 1/6$ and moreover  that  $1\geq 3\lambda(M+1)$, as in the hypothesis of \cref{lem:Dehn}. Note that this second assumption can be obtained since $\mathcal R$ is finite and hence one can add all syllables of the elements of $\mathcal R$ to $X_A$ and $X_B$ which would mak $M=1$. 

Let $G=(A\ast B) / \nclose{\mathcal R}$, and let
\[ \ell_0 = M\max\{ |r|  \colon r\in \mathcal{R}\} .\]
Hence $\ell_0$ is an upper bound on  the maximal length of a geodesic word in $X_A\cup X_B$ representing an element of $\mathcal{R}$.

\begin{lemma}\label{lem:happy04}
Let $\ell> \ell_0$ and suppose $\ell \in H(G)$. Then there is $k\in H(A)\cup H(B)$ such that $\ell\leq k<2\ell$.
\end{lemma}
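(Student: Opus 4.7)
The plan is a proof by contradiction: assume $\ell \in H(G)$ and that $H(A) \cup H(B)$ contains no integer $k$ with $\ell \le k < 2\ell$; I will show that every edge loop of length $\ell$ in $\Gamma(G)$ is null-homotopic in $\Gamma_\ell(G)$, contradicting $\ell \in H(G)$.

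The key step is the following claim, proved by strong induction on $k$: every homogeneous edge loop in $\Gamma(G)$ (i.e., one whose label is entirely in $X_A$ or entirely in $X_B$) of length $k < 2\ell$ is null-homotopic in $\Gamma_\ell(G)$. For $k < \ell$ this is immediate by the definition of $\Gamma_\ell(G)$. For $\ell \le k < 2\ell$, say the loop $L$ is an $A$-loop; the assumption $k \notin H(A)$ forces $L$ to be null-homotopic in $\Gamma_k(A)$, so it bounds a combination of 2-cells of $\Gamma_k(A)$, each of which is an $A$-loop of length strictly less than $k$, hence of length less than $2\ell$, and is therefore null-homotopic in $\Gamma_\ell(G)$ by the inductive hypothesis. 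The same argument works for $B$-loops.

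Now fix a taut edge loop $W$ of length $\ell$ in $\Gamma(G)$, cyclically reduced as an $X$-word (any backtrack can be removed to produce a shorter loop, which is null-homotopic and would contradict tautness). If $W$ has a single syllable, then $W$ is itself a homogeneous loop of length $\ell < 2\ell$, so by the claim it is null-homotopic in $\Gamma_\ell(G)$, contradicting tautness. Thus $W$ has at least two syllables, each of length at most $\ell - 1$. If some syllable $V$ of $W$ is not geodesic in its factor, let $V'$ be a geodesic word representing the same element; the loop $V \cdot (V')^{-1}$ is homogeneous of length $|V|_X + |V'|_X \le 2(\ell - 1) < 2\ell$, so by the claim it is null-homotopic in $\Gamma_\ell(G)$. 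Replacing $V$ by $V'$ in $W$ therefore yields a loop $W^\star$ homotopic to $W$ in $\Gamma_\ell(G)$ with $|W^\star|_X < \ell$; being of length below $\ell$, $W^\star$ bounds a 2-cell of $\Gamma_\ell(G)$, so $W$ is null-homotopic, a contradiction.

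Hence every syllable of $W$ is geodesic, $W$ has at least two alternating syllables, and $W$ represents a non-trivial element of $\nclose{\mathcal R}$ in $A \ast B$. Applying \cref{lem:Dehn} to $W$ yields a relator $R = S \cdot T$ and a factorization $W = U \cdot S \cdot V$, both without syllable-combining at the seams, satisfying $|S|_X > |T|_X$. Replacing $S$ by $T^{-1}$ produces $W' = U \cdot T^{-1} \cdot V$ with $|W'|_X < |W|_X = \ell$. The relator loop $R$ has $X$-length at most $\ell_0 < \ell$, so it bounds a 2-cell of $\Gamma_\ell(G)$, and $W$ and $W'$ are homotopic in $\Gamma_\ell(G)$. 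Since $|W'|_X < \ell$, the loop $W'$ is null-homotopic in $\Gamma_\ell(G)$, hence so is $W$, which is the final contradiction. I expect the main obstacle to be the inductive claim of the second paragraph, since it is precisely the mechanism that transports the contradiction hypothesis on the homogeneous spectra $H(A), H(B)$ into usable null-homotopies inside the ambient complex $\Gamma_\ell(G)$.
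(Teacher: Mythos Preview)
Your proof is correct and follows essentially the same route as the paper's: both use \cref{lem:Dehn} to eliminate the all-geodesic case and both locate the required element of $H(A)\cup H(B)$ via the short homogeneous loop coming from a non-geodesic syllable. The only difference is packaging --- you argue by contradiction and spell out inductively that homogeneous loops of length below $2\ell$ die in $\Gamma_\ell(G)$, whereas the paper argues directly and uses the contrapositive of that claim implicitly when it passes from ``$S\cdot\bar T$ not null-homotopic in $\Gamma_\ell(B)$ of length $<2\ell$'' to ``some $k\in[\ell,2\ell)$ lies in $H(B)$''.
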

\begin{proof}
There is a closed path $\gamma$ of length $\ell$ that is not null-homotopic in $\Gamma_\ell(G)$. Observe that  $\gamma$ is an embedded path, and hence it is labeled by a cyclically reduced word $W$ such that each syllable represents a non-trivial element in the corresponding factor.

If $W$ is a $B$-word then, up to a translate, $\gamma$ lifts to a path in $\Gamma_\ell(B)$ that is not null-homotopic and hence $\ell\in H(B)$. We proceed analogously when $W$ is an $A$-word.

Suppose that $W$ has at least two syllables, i.e., $W$ is neither an $A$-word nor a $B$-word. 
 
Let us argue that there is a syllable of $W$ that is not geodesic. Suppose that all syllables of $W$ are geodesic. Then $W$ is a non-trivial cyclically reduced geodesic word, and Lemma~\ref{lem:Dehn} implies that there is a geodesic word $R=S\bar T$ representing an element of $\mathcal{R}$ such that  $W=U\cdot S\cdot V$ and $|S|>|T|$. Since $R$ is geodesic, it follows that $|R|\leq \ell_0<\ell$ and hence there is a path $\gamma'$ in $\Gamma_\ell(G)$ labeled by the word $U\cdot T\cdot V$ that is homotopic to $\gamma$. Since $|\gamma'|<|\gamma|=\ell$, it follows that $\gamma'$ is null homotopic in $\Gamma_\ell(G)$ and therefore $\gamma$ as well. This is a contradiction. Therefore not all syllables of $W$ are geodesic.

Suppose $S$ is a  $B$-syllable of $W$ that is not geodesic. Let $T$ be a geodesic $B$-word that represents the same element as $S$. Note that $S\cdot \bar T$ does not label a null homotopic path in $\Gamma_\ell(G)$ since the otherwise the $\ell$-path $\gamma$ would be homotopic to a shorter closed path in $\Gamma_\ell(G)$ and then $\gamma$ would be null homotopic which is not the case. Since $|T|<|S|<|W|=\ell$, it follows that the $B$-word $S\cdot \bar T$ has length strictly smaller than $2\ell-2$. Let $\beta$ be a path in $\Gamma_\ell(B)$ labeled by the $B$-word $S\cdot \bar T$. Since $\beta$ is not null homotopic in $\Gamma_\ell(B)$ and $|\beta|<2\ell$, it follows that there there is $\ell\leq k<2\ell$ such that $k\in H(B)$. We proceed analogously when $S$ is an $A$ syllable of $W$ that is not geodesic.
\end{proof}

For $\ell> 0$, define $\Gamma'_\ell(G)$ as the $2$-complex with $1$-skeleton the Cayley graph of $G$, and a $2$-cell for each homogeneous cycle of length less than $\ell$ and each closed path labeled by a geodesic word representing an  element of $\mathcal{R}$. Note that $G$ acts on  $\Gamma'_\ell(G)$. 

\begin{lemma}\label{lem:happy2}
Let $\ell>0$.
The map $\Gamma_\ell(B) \to \Gamma'_\ell(G)$ is $\pi_1$-injective.
\end{lemma}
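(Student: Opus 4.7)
The plan is to adapt the argument from the proof of \Cref{prop:TautProduct}, whose crux was that the inclusion $X_\ell(B) \hookrightarrow X_\ell(A) \vee X_\ell(B)$ is $\pi_1$-injective. Here one must show the analogous statement after attaching relator $2$-cells. Consider the quotient $2$-complex $Y_\ell := \Gamma'_\ell(G)/G$, on which $G$ acts freely; then $Y_\ell$ is obtained from the wedge $X_\ell(A) \vee X_\ell(B)$ by attaching one $2$-cell for each geodesic word representing an element of $\mathcal R$. The central claim is that $X_\ell(B) \hookrightarrow Y_\ell$ is $\pi_1$-injective. Once this is known, a closed path $\gamma$ in $\Gamma_\ell(B)$ that is null-homotopic in $\Gamma'_\ell(G)$ projects to a null-homotopic path in $X_\ell(B) \subset Y_\ell$; a disk diagram witnessing this null-homotopy in $X_\ell(B)$ then lifts along the covering $\Gamma_\ell(B) \to X_\ell(B)$ to a disk diagram for $\gamma$ in $\Gamma_\ell(B)$, exactly as in the proof of \Cref{prop:TautProduct}.

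To verify the central claim, I would use Seifert--van Kampen to identify $\pi_1(X_\ell(A)) = A_\ell$, $\pi_1(X_\ell(B)) = B_\ell$, and $\pi_1(Y_\ell) = (A_\ell \ast B_\ell)/\nclose{\mathcal R'}$, where $A_\ell$ (respectively $B_\ell$) is the group presented on the alphabet $X_A$ (resp.\ $X_B$) with relations the $X_A$-words (resp.\ $X_B$-words) of length less than $\ell$ that are trivial in $A$ (resp.\ $B$), and $\mathcal R' \subset A_\ell \ast B_\ell$ is the symmetrized subset obtained by lifting each $r \in \mathcal R$ via its chosen geodesic word. I would then apply \Cref{thm:filling} in the ambient group $A_\ell \ast B_\ell$ to conclude the desired injection $B_\ell \hookrightarrow \pi_1(Y_\ell)$, which is precisely $\pi_1$-injectivity of $X_\ell(B) \hookrightarrow Y_\ell$.

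The main subtlety, which I expect to be the principal obstacle, is verifying that $\mathcal R'$ satisfies the $C'(1/6)$ small cancellation condition in $A_\ell \ast B_\ell$. The key point is that the surjections $A_\ell \twoheadrightarrow A$ and $B_\ell \twoheadrightarrow B$ cannot send a non-trivial element to the identity, so every non-trivial element of $A$ (resp.\ $B$) lifts to a non-trivial element of $A_\ell$ (resp.\ $B_\ell$). Consequently every syllable of the geodesic representative of $r$ remains non-trivial in $A_\ell$ or $B_\ell$, so the normal form length of the lift $r'$ in $A_\ell \ast B_\ell$ agrees with that of $r$ in $A \ast B$. Moreover any piece of $\mathcal R'$ in $A_\ell \ast B_\ell$ projects to a piece of $\mathcal R$ in $A \ast B$ of the same normal form length, so the $C'(1/6)$ condition transfers to $A_\ell \ast B_\ell$ and \Cref{thm:filling} applies, completing the plan.
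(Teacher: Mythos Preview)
Your argument is correct and is essentially the paper's own proof: pass to the quotient $Y_\ell=\Gamma'_\ell(G)/G$, identify $\pi_1(Y_\ell)=(A_\ell\ast B_\ell)/\nclose{\mathcal R'}$, observe that $\mathcal R'$ still satisfies $C'(1/6)$ so that \Cref{thm:filling} yields $B_\ell\hookrightarrow\pi_1(Y_\ell)$, and then lift disk diagrams as in \Cref{prop:TautProduct}; you simply spell out the $C'(1/6)$ verification that the paper asserts in one line. One wording slip: the phrase ``the surjections $A_\ell\twoheadrightarrow A$ \ldots\ cannot send a non-trivial element to the identity'' literally claims these maps are injective (they are not) --- what you actually need, and what your argument uses, is the trivial observation that a non-trivial element of $A$ has only non-trivial preimages in $A_\ell$, so that normal-form lengths are preserved under lifting.
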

\begin{proof}
Note that $\Gamma_\ell'(G)/G$ is obtained from the wedge of $\Gamma_\ell(A)/A$ and $\Gamma_\ell(B)/B$ by attaching 2-cells using the elements of $\mathcal R$. Thus \[\pi_1(\Gamma_\ell'(G)/G)=\left( \pi_1(\Gamma_\ell(A)/A)\ast \pi_1(\Gamma_\ell(B)/B) \right)/\langle\langle\mathcal R\rangle\rangle.\] Moreover, the relations still satisfy the $C'(1/6)$ small cancellation condition. Therefore $\Gamma_\ell(B)/B\to \Gamma_\ell'(G)/G$ is $\pi_1$-injective.
Now the proof is analogous to the one in the free product case.
\end{proof}

\begin{lemma}\label{lem:happy}
Any closed path in $\Gamma_{2\ell}'(G)$ of length less than $\ell$ is null homotopic.
\end{lemma}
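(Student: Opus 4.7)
The plan is induction on the length $n$ of the closed path; the base case $n = 0$ is trivial, and the inductive step will show that any closed path of length $n < \ell$ in $\Gamma'_{2\ell}(G)$ is homotopic in that complex to a strictly shorter closed path, hence null homotopic by the inductive hypothesis.

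Let $\gamma$ be such a path with label $W$. First I would dispose of easy reductions. If $\gamma$ has cyclic backtracking, remove it. If some syllable $S$ of $W$ fails to be geodesic in its factor, replace it by a shorter geodesic word $T$ over the same factor's generators; then $S \cdot T^{-1}$ labels a homogeneous closed path of length $|S|_X + |T|_X < 2|S|_X \leq 2n < 2\ell$, hence is bounded by a $2$-cell of $\Gamma'_{2\ell}(G)$, and homotoping across it produces a strictly shorter closed path. This step is precisely what forces the doubling of the parameter in the statement: the homogeneous $2$-cells must absorb cycles of length up to roughly $2n$, and $\Gamma'_\ell(G)$ alone would not suffice.

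After these reductions, $W$ is cyclically reduced with every syllable geodesic, and in particular every syllable is non-trivial in its factor (the empty word being the unique geodesic representing the identity). If $W$ consisted of a single syllable, the element it represents would lie in a factor and, by the injectivity of $A \hookrightarrow G$ and $B \hookrightarrow G$ asserted in \cref{thm:filling}, could equal $1_G$ only by being empty, contradicting $n \geq 1$. Hence $W$ is a genuine alternating word representing a non-identity element $w \in A \ast B$ which, because $\gamma$ is closed in $\Gamma'_{2\ell}(G)$, lies in $\nclose{\mathcal R} \setminus \{1\}$.

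This puts us exactly in the setting of \cref{lem:Dehn}, whose invocation is the heart of the argument and the step I expect to be the main obstacle. It produces a cyclically reduced geodesic word $R$ representing some $r \in \mathcal R$ together with decompositions $W = U \cdot S \cdot V$ and $R = S \cdot T$ (neither combining syllables) satisfying $|S|_X > |T|_X$. By the very definition of $\Gamma'_{2\ell}(G)$, the closed path labeled $R$ bounds a $2$-cell regardless of its length, and homotoping the subpath $S$ across it to one labeled $T^{-1}$ yields a homotopic closed path of length $|U|_X + |T|_X + |V|_X < n$. The inductive hypothesis finishes the argument, and hence $\gamma$ is null homotopic. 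The $C'(1/6)$ small cancellation condition is essential here because without it one cannot guarantee that a piece of $\gamma$ already covers more than half of some relator, which is what makes the length decrease $|S|_X > |T|_X$ possible.
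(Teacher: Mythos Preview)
Your proof is correct and follows essentially the same approach as the paper: induction on length, with the inductive step splitting into (i) a non-geodesic syllable gets shortened via a homogeneous $2$-cell of length $<2\ell$, and (ii) a geodesic multisyllable word gets shortened via \cref{lem:Dehn} and an $\mathcal R$-type $2$-cell. The only organizational difference is that the paper disposes of the monosyllable case first (such a cycle is directly the boundary of a homogeneous $2$-cell since its length is $<\ell<2\ell$), whereas you first force all syllables geodesic and then argue the monosyllable case becomes vacuous; both are fine.
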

\begin{proof}
It is enough to prove the statement for embedded closed paths. Let $\alpha$ be an embedded closed path of length less than $\ell$ in $\Gamma_{2\ell}'(G)$. Let $W$ be the reduced cyclic word that labels the path $\alpha$. 

If $W$ is a monosyllable word then $\alpha$ is a   null-homotopic closed path in $\Gamma_{2\ell}'(G)$ by definition. 

If $W$ is not a monosyllable word and all syllables are geodesic then Lemma~\ref{lem:Dehn} implies that $\alpha$ is homotopic in $\Gamma_{2\ell}'(G)$ to a shorter closed path.


If $W$ is not a monosyllable word and is not geodesic. Then any syllable has length less than $|W|<\ell$. Hence replacing each syllable by a geodesic syllable implies $\alpha$ is homotopic in $\Gamma_{2\ell}'(G)$ to a shorter closed path.   

The result of the lemma follows by induction on the length of the closed path.
\end{proof}

\begin{lemma}\label{lem:happy3}
If $\gamma\colon S^1\to \Gamma_{\ell}(G)$ is null homotopic, then $\gamma\colon S^1\to \Gamma_{2\ell}'(G)$ is null homotopic.
\end{lemma}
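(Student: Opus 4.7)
The plan is to construct a continuous map $\Phi\colon \Gamma_{\ell}(G) \to \Gamma_{2\ell}'(G)$ that restricts to the identity on the common $1$-skeleton, namely the Cayley graph of $G$. Granted such a $\Phi$, any null-homotopy $H\colon D^2\to \Gamma_{\ell}(G)$ with $H|_{\partial D^2}=\gamma$ produces, via post-composition, a null-homotopy $\Phi\circ H\colon D^2\to \Gamma_{2\ell}'(G)$ of $\gamma$ (since $\gamma$ lies in the $1$-skeleton, where $\Phi$ is the identity). This is exactly the conclusion of the lemma.

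To build $\Phi$, I would first declare it to be the identity on the common $1$-skeleton and then extend cell by cell. By definition, each $2$-cell of $\Gamma_{\ell}(G)$ is attached along an edge loop in the Cayley graph of length strictly less than $\ell$. Viewing such a loop inside $\Gamma_{2\ell}'(G)$, \cref{lem:happy} guarantees that every closed path there of length less than $\ell$ is null-homotopic; hence the attaching map of each $2$-cell extends to a continuous map from the closed disk into $\Gamma_{2\ell}'(G)$. Carrying this out for every $2$-cell assembles the desired map $\Phi$.

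There is no genuine obstacle in the argument beyond the appeal to \cref{lem:happy}: that lemma is precisely tailored to fill the boundaries of the $2$-cells of $\Gamma_{\ell}(G)$ inside $\Gamma_{2\ell}'(G)$, and the doubling from $\ell$ to $2\ell$ in the target is exactly the slack that the Dehn-style reduction underlying \cref{lem:Dehn} and \cref{lem:happy} requires. Everything else reduces to the standard obstruction-theoretic extension of a continuous map cell by cell.
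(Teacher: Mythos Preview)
Your proposal is correct and follows essentially the same idea as the paper: both arguments use \cref{lem:happy} to fill, inside $\Gamma_{2\ell}'(G)$, each $2$-cell of $\Gamma_{\ell}(G)$ (whose attaching loop has length $<\ell$). The paper does this locally by replacing the $2$-cells of a disk diagram for $\gamma$, while you package the same step into a global map $\Phi$ extending the identity on the $1$-skeleton; the two versions are interchangeable.
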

\begin{proof}
Since $\gamma\colon S^1\to \Gamma_{\ell}(G)$ is null homotopic, there is a cellular disk diagram $D\to \Gamma_{\ell}(G)$ whose boundary path is $\gamma$. Then the boundary path of any $2$-cell of $D$ maps to the boundary path of a $2$-cell of $\Gamma_{\ell}(G)$ and hence it has length less than $\ell$. By Lemma~\ref{lem:happy}, each $2$-cell of $D$ can be replaced by a disk diagram that maps into $\Gamma_{2\ell}'(G)$. Hence there is a disk diagram $D' \to \Gamma_{2\ell}'(G)$ whose boundary path is $\gamma$.
\end{proof}

\begin{lemma}\label{lem:happy05}
Let $\ell>\ell_0$ and suppose that $\ell\in H(A)\cup H(B)$. Then there is $k\in H(G)$ such that $\ell/2\leq k\leq \ell+1$. \end{lemma}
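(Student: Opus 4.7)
The plan is to take a loop $\gamma$ witnessing $\ell\in H(A)\cup H(B)$ and sandwich it between two auxiliary complexes for $G$ to locate the required exponent. Without loss of generality assume $\ell\in H(B)$, so there exists a closed path $\gamma$ of length $\ell$ in the Cayley graph of $B$ that is not null-homotopic in $\Gamma_\ell(B)$. By Theorem~\ref{thm:filling}, $B$ embeds in $G$, so $\gamma$ may be viewed as a closed path in the Cayley graph of $G$. The case $\ell\in H(A)$ will be treated identically.

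First I would bound the lowest index at which $\gamma$ becomes null-homotopic. By Lemma~\ref{lem:happy2} the inclusion $\Gamma_\ell(B)\to\Gamma'_\ell(G)$ is $\pi_1$-injective, so $\gamma$ remains non-null-homotopic in $\Gamma'_\ell(G)$. Setting $m=\lfloor \ell/2\rfloor$, we have $2m\leq\ell$, so every $2$-cell of $\Gamma'_{2m}(G)$ is also a $2$-cell of $\Gamma'_\ell(G)$. If $\gamma$ were null-homotopic in $\Gamma_m(G)$, then Lemma~\ref{lem:happy3} (applied with $\ell$ replaced by $m$) would force $\gamma$ to be null-homotopic in $\Gamma'_{2m}(G)\subseteq \Gamma'_\ell(G)$, contradicting the previous sentence. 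Hence $\gamma$ is not null-homotopic in $\Gamma_m(G)$.

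Next I would observe the opposite endpoint. Since $\gamma$ has length $\ell$, in $\Gamma_{\ell+1}(G)$ the loop $\gamma$ itself bounds a $2$-cell and is therefore null-homotopic. Consequently $[\gamma]$ is nontrivial in $\pi_1(\Gamma_m(G))$ but trivial in $\pi_1(\Gamma_{\ell+1}(G))$. Pigeonholing along the composition of canonical maps $\pi_1(\Gamma_m(G))\to\pi_1(\Gamma_{m+1}(G))\to\cdots\to\pi_1(\Gamma_{\ell+1}(G))$, there must exist an integer $k$ with $m\leq k\leq\ell$ for which $\pi_1(\Gamma_k(G))\to\pi_1(\Gamma_{k+1}(G))$ fails to be injective, hence fails to be an isomorphism. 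By the second characterization of the taut loop length spectrum, $k\in H(G)$. The bounds $\ell/2\leq k\leq\ell+1$ then follow (with the mild roundoff absorbed into $m=\lfloor\ell/2\rfloor$).

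The substantive work has already been done in Lemmas~\ref{lem:happy2} and~\ref{lem:happy3}: the $\pi_1$-injectivity from the small-cancellation factorization of $\Gamma'_\ell(G)/G$, and the Dehn-type replacement that forces a loss of a factor of two when converting disk diagrams in $\Gamma_\ell(G)$ into disk diagrams in $\Gamma'_{2\ell}(G)$. What remains in this lemma is a monotonicity-plus-pigeonhole argument once the sandwich $\Gamma_m(G)\rightsquigarrow \Gamma'_\ell(G)\rightsquigarrow \Gamma_{\ell+1}(G)$ is in place; no further small cancellation input is needed.
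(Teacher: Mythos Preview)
Your argument is correct and follows essentially the same route as the paper: use Lemma~\ref{lem:happy2} to see that $\gamma$ survives in $\Gamma'_\ell(G)$, invoke the contrapositive of Lemma~\ref{lem:happy3} to push this down to $\Gamma_{\ell/2}(G)$ (you make this explicit with $m=\lfloor\ell/2\rfloor$ and the containment $\Gamma'_{2m}(G)\subseteq\Gamma'_\ell(G)$), and then run the pigeonhole along the chain of inclusions up to $\Gamma_{\ell+1}(G)$. The paper simply writes $\Gamma_{\ell/2}(G)$ directly and leaves the pigeonhole implicit; your version is slightly more explicit but otherwise identical.
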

\begin{proof}
Let $\gamma$ be a closed path in $\Gamma_\ell(B)$ of length $\ell$ that is not null homotopic. Lemma~\ref{lem:happy2} implies that $\gamma$ is  not null homotopic in $\Gamma'_\ell(G)$. Then Lemma~\ref{lem:happy3} implies that $\gamma$ is not null homotopic in $\Gamma_{\ell/2}(G)$. Hence $\gamma$ is path of length $\ell$ that is not null homotopic in $\Gamma_{\ell/2}(G)$. It follows there is $k\in H(G)$ such that $\ell/2\leq k\leq \ell$.
We proceed in an analogous way when $\gamma$ be a closed path in $\Gamma_\ell(A)$ of length $\ell$ that is not null homotopic.
\end{proof}

\begin{proof}[Proof \Cref{thm:taut:small:cancellation}]
The statement of the theorem follows from Lemmas~\ref{lem:happy04} and~\ref{lem:happy05}.
\end{proof}

\section{Proper and virtually cyclic dimensions of multi-ended groups}\label{sec:dimensional:bounds}\label{sec:NewTheorem}

Let $G$ be a discrete group. Given a CW-complex $X$ with a cellular $G$-action, we say $X$ is a $G$-CW-complex if every element of $G$ that fixes setwise a cell, it fixes the cell pointwise. A non-empty collection $\calF$ of subgroups of
$G$ is called a \emph{family} if it
is closed under conjugation and under taking subgroups. 
A $G$-CW-complex $X$ is a \emph{model} for \emph{the classifying space} $E_{\calF}G$ if the following conditions are satisfied:
\begin{enumerate}
  \item For all $x\in X$, the isotropy group $G_x$ belongs to $\calF$.

  \item For all  $H\in\calF$ the subcomplex $X^H$  of $X$, consisting of points in $X$ that are fixed under all elements of $H$, is contractible. In particular $X^H$ is non-empty.
\end{enumerate}

The \emph{$\calF$-geometric dimension of $G$}, denoted $\gd_\calF(G)$, is the minimum $n$ for which there exists an $n$-dimensional model for $E_\calF G$, see \Cref{subsect:small:cancellation}. The  orbit category $\calO_\calF G$  has as objects the homogeneous $G$-spaces $G/H$, $H\in \calF$, and morphisms are  $G$-maps. A  \emph{$\calO_\calF G$-module} is a contraviariant functor from $\calO_\calF G$ to the category of abelian groups, and a morphism between two $\calO_\calF G$-modules is a natural transformation of the underlying functors. Denote by $\calO_\calF G$-mod the category of $\calO_\calF G$-modules. It turns out that $\calO_\calF G$-mod is an abelian category with enough projectives (see \cite[pg.~9]{MV03}). The
\emph{$\calF$-cohomological dimension of $G$}, denoted $\cd_{\calF}(G)$, is the length of the shortest projective resolution of the constant $\orf{G}$-module $\dbZ_\calF$, where $\dbZ_\calF$ is given by $\dbZ_\calF(G/H)=\dbZ$, for all $H\in \calF$, and every morphism of $\orf{G}$ goes to the identity function.
The following inequality is a standard result
\[\cd_\calF(G)\leq \gd_\calF(G), \]
see for instance \cite{LM00}.
We denote by $\cdfin(G)$ and $\cdvc(G)$ the cohomological dimensions for the families of finite and virtually cyclic subgroups respectively, and analogously for geometric dimensions. We also denote by $\underline{E}G$ and $\underline{\underline{E}}G$ the classifying spaces of $G$ for the families of finite and virtually cyclic subgroups respectively. In this section we prove the following results.

\begin{theorem}\label{cor:closed:under:takings:graphs}
Let $G$ be the fundamental group of a graph of groups with finite edge stabilizers, and let $T$ be the Bass-Serre tree.
Then the following inequalities hold
\[\gdfin(G)\leq \max\{1,\gdfin(G_{\sigma})| \ 
\sigma\in \ T \},\quad \cdfin(G)\leq \max\{1,\cdfin(G_{\sigma})| \ 
\sigma \in \ T \}\]
\[\gdvc(G)\leq \max\{2,\gdvc(G_{\sigma})| \ 
\sigma \in \ T \},\quad\cdvc(G)\leq \max\{2,\cdvc(G_{\sigma})| \ 
\sigma \in \ T \}.\]
\end{theorem}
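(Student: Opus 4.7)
The plan is to use the Bass-Serre tree $T$, on which $G$ acts with finite edge stabilizers, as the backbone for constructing the required classifying spaces. The key geometric facts driving the argument are (a) every finite subgroup of $G$ fixes some vertex of $T$, and (b) every infinite virtually cyclic subgroup $V \leq G$ either fixes a vertex of $T$ or acts by translation along a unique axis $L_V \subset T$ whose setwise stabilizer $\bar V_L$ is itself virtually cyclic---the last conclusion uses finiteness of edge stabilizers, since the pointwise stabilizer of $L_V$ is contained in an edge stabilizer.

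For the family $\fin$, I would assemble the standard ``graph of classifying spaces'' pushout. At each vertex-orbit representative $v$ choose a model $X_v$ for $E_\fin G_v$ of dimension $\gdfin(G_v)$; each edge-orbit representative requires $E_\fin G_e$, which is a point since $G_e$ is finite. Gluing these via $T$ yields a $G$-CW complex $X$ of dimension $\max\{1, \gdfin(G_v) : v\}$. To verify that $X^H$ is contractible for every finite $H \leq G$, note that $T^H$ is a non-empty subtree and that $X^H$ is a ``tree of contractible pieces'' $X_v^H$ for $v \in T^H$ glued along the fixed edge points. The corresponding cohomological bound comes from the Bredon Mayer--Vietoris sequence associated to the same pushout: the one-dimensional tree contributes at most one extra column of non-trivial terms, giving $\cdfin(G) \leq \max\{1, \cdfin(G_v)\}$.

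For the family $\vcyc$, I would apply the L\"uck--Weiermann pushout relating $E_\fin G$ and $E_\vcyc G$ indexed by conjugacy classes of maximal infinite virtually cyclic subgroups $[V]$, with commensurability as the equivalence relation. These classes split into two types. In the \emph{elliptic case} $V \leq G_v$, the commensurator $N_G[V]$ lies inside $G_v$: any infinite subgroup $V_0 \leq V \cap gVg^{-1}$ would stabilise the edge-path from $v$ to $gv$, so finiteness of edge stabilizers forces $v=gv$. The pushout term at such $[V]$ is therefore controlled by the L\"uck--Weiermann pushout carried out inside $G_v$, which outputs a space of dimension at most $\gdvc(G_v)$. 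In the \emph{hyperbolic case} the commensurator equals the axis-stabilizer $\bar V_L$, which is virtually cyclic; consequently $E_\fin \bar V_L$ has dimension at most $1$ and $E_{\calG[V]}\bar V_L$ is a point, so the resulting cone contributes cells of dimension at most $2$, which is the origin of the ``$2$'' in the bound. Assembling everything produces a model of dimension $\max\{2, \gdvc(G_v)\}$ for $E_\vcyc G$, and the parallel Bredon-cohomological calculation yields the bound for $\cdvc(G)$.

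The hardest step will be the virtually cyclic case: one must set up L\"uck--Weiermann carefully with commensurability as the equivalence relation, identify $N_G[V]$ in both the elliptic and hyperbolic settings, and confirm that the hyperbolic contribution is at most $2$-dimensional. Finiteness of edge stabilizers is used twice in essential ways---to force $N_G[V]\subseteq G_v$ in the elliptic case, and to force $\bar V_L$ to be virtually cyclic in the hyperbolic case---so the proof reads as a coherent exploitation of this single structural hypothesis of the theorem.
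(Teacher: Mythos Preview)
Your treatment of the family $\fin$ is correct and coincides with the paper's: both amount to the Haefliger--L\"uck ``tree of classifying spaces'' construction, with the paper phrasing it via its general \cref{coro:haefliger} applied to $\fin \subseteq \calG$ on the Bass--Serre tree.

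For $\vcyc$, however, there is a genuine gap in the argument as you have written it. You propose to run the L\"uck--Weiermann pushout starting from a model for $E_\fin G$; but that pushout always contains $E_\fin G$ as a subcomplex, so its dimension is at least $\gdfin(G) \leq \max\{1,\gdfin(G_v)\}$. Thus the bound you actually obtain is $\gdvc(G)\le\max\{2,\gdfin(G_v)\}$, not $\max\{2,\gdvc(G_v)\}$. Your sentence ``the L\"uck--Weiermann pushout carried out inside $G_v$ \dots outputs a space of dimension at most $\gdvc(G_v)$'' is not correct: the LW pushout for $G_v$ contains $E_\fin G_v$, so it has dimension at least $\gdfin(G_v)$, which can exceed $\gdvc(G_v)$ (already $G_v=\mathbb Z$ shows this, and in general one only knows $\gdfin\le\gdvc+1$). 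So the elliptic contribution is not controlled in the way you claim. This matters for the applications: in the proof of \cref{thm:main} one needs the class $\{\gdvc\le 2\}$ to be closed under amalgamation over finite subgroups, and the weaker bound does not give that.

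The paper avoids this by proceeding in the opposite order. It first cones off the geodesics in $T$ that carry a cocompact virtually cyclic action, obtaining (via acylindricity and \cite[Theorem~6.3]{LASS21}) a $2$-dimensional model $X$ for $E_\calH G$ with $\vcyc\subseteq\calH$; the stabilizers in $X$ are either the original vertex groups $G_v$ or virtually cyclic. Then it applies \cref{coro:haefliger} to the pair $\vcyc\subseteq\calH$: at each vertex $v$ of $T$ one inserts a minimal model for $E_\vcyc G_v$ (contributing $\gdvc(G_v)$), while every other cell contributes at most $2$. Your approach can be repaired along similar lines: first build the ``tree of $E_\vcyc G_v$'' (which is a model for the family of \emph{elliptic} virtually cyclic subgroups, of dimension $\max\{1,\gdvc(G_v)\}$), and only then cone off the hyperbolic axes; but this is a different construction from the single global LW pushout you describe.
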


\begin{theorem}\label{cor:megafinal-body}
Under the assumptions of \cref{cor:closed:under:takings:graphs}, if all vertex groups have small centralizers and satisfy the ascending chain condition for finite subgroups, then  
\[\gdvc(G)\leq \max\{2,\gdfin(G_{\sigma})| \ 
\sigma \in \ T \}\text{ and }\cdvc(G)\leq \max\{2,\cdfin(G_{\sigma})| \ 
\sigma \in \ T \}.\]
\end{theorem}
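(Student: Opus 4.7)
The plan is to derive this strengthened inequality by combining \Cref{cor:closed:under:takings:graphs} with the dimension bound lemma (\cref{lemma:bound:dimensions}) invoked in the proof of \Cref{thm:main}, which says that for any group $H$ with small centralizers and the ascending chain condition on finite subgroups, one has $\gdvc(H)\leq \max\{2,\gdfin(H)\}$ and $\cdvc(H)\leq \max\{2,\cdfin(H)\}$. The hypotheses of the current theorem are designed precisely so that this lemma can be applied vertex-group by vertex-group.

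First I would fix the Bass-Serre tree $T$ and record that for each vertex $v\in T$ the stabilizer $G_v$ is a vertex group satisfying the small centralizers condition and the ascending chain condition for finite subgroups. Applying \cref{lemma:bound:dimensions} to each $G_v$ yields
\[\gdvc(G_v)\leq \max\{2,\gdfin(G_v)\}\quad\text{and}\quad\cdvc(G_v)\leq \max\{2,\cdfin(G_v)\}.\]
For each edge $e\in T$ the stabilizer $G_e$ is finite, so $\gdvc(G_e)=\cdvc(G_e)=0$ and $\gdfin(G_e)=\cdfin(G_e)=0$, and these contribute trivially to any maximum taken with $2$. Hence
\[\max\{2,\gdvc(G_\sigma)\mid \sigma\in T\}=\max\{2,\gdvc(G_v)\mid v\text{ a vertex of }T\}\leq \max\{2,\gdfin(G_\sigma)\mid \sigma\in T\},\]
and analogously for the cohomological version.

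Next I would feed this into \Cref{cor:closed:under:takings:graphs}, which already provides
\[\gdvc(G)\leq \max\{2,\gdvc(G_\sigma)\mid \sigma\in T\}\quad\text{and}\quad \cdvc(G)\leq \max\{2,\cdvc(G_\sigma)\mid \sigma\in T\}.\]
Chaining the two inequalities produces precisely the desired bounds
\[\gdvc(G)\leq \max\{2,\gdfin(G_\sigma)\mid \sigma\in T\}\quad\text{and}\quad \cdvc(G)\leq \max\{2,\cdfin(G_\sigma)\mid \sigma\in T\}.\]

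Since both ingredients are already established earlier in the paper, there is no real obstacle. The only subtlety worth double-checking is that \cref{lemma:bound:dimensions} is available to each vertex group with only its own internal hypotheses (small centralizers and ACC on finite subgroups), rather than requiring them of $G$ itself; the theorem's hypotheses guarantee exactly this. I would not need to verify that $G$ inherits either property, and the finiteness of edge stabilizers is the sole reason the edge contributions can be absorbed into the constant $2$.
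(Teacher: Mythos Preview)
Your proof is correct and matches the paper's own argument: the paper also derives the result by combining the inequalities from \Cref{cor:closed:under:takings:graphs} with \Cref{lemma:bound:dimensions} applied to each vertex group. Your treatment is in fact more explicit than the paper's one-sentence proof, but the logical content is identical.
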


\subsection{An Auxiliary Result}
The following proposition is a classical result in homology theory, see for instance the Proof 1 of Proposition 1.1 in \cite{Br87}.

\begin{proposition}\label{prop:baby:haefliger}
Let $G$ be a group and let $R$ be a ring. Assume $G$ acts on an $R$-acyclic $G$-CW-complex $X$. Then 
\[\cd_{R}(G)\leq \max\{\cd_{R}(G_{\sigma})+\dim(\sigma)| \ 
\sigma \text{ is a cell of } \ X \}.\] 
\end{proposition}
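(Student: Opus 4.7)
The plan is to construct a projective $RG$-resolution of $R$ of length at most $N := \max\{\cd_R(G_\sigma) + \dim(\sigma) : \sigma \text{ a cell of } X\}$ by combining the cellular chain complex of $X$ with projective resolutions of its individual chain modules. This follows the classical Cartan--Eilenberg double-complex template that underlies the proof in Brown's paper cited by the authors.

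The first step is to exploit $R$-acyclicity: the augmented cellular chain complex
\[\cdots \to C_1(X;R) \to C_0(X;R) \to R \to 0\]
is exact, and the $G$-CW hypothesis guarantees a decomposition $C_p(X;R) = \bigoplus_{\sigma \in \Sigma_p} R[G/G_\sigma]$ over representatives $\Sigma_p$ of $G$-orbits of $p$-cells, with no orientation ambiguity since stabilizers fix cells pointwise. Setting $d_\sigma := \cd_R(G_\sigma)$, a projective $RG_\sigma$-resolution of $R$ of length $d_\sigma$ exists by definition; inducing up to $RG$ (which is exact and preserves projectivity because $RG$ is free over $RG_\sigma$) yields a projective $RG$-resolution of $R[G/G_\sigma]$ of the same length. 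Direct-summing over $\sigma \in \Sigma_p$ produces a projective $RG$-resolution $P_{\ast,p} \to C_p(X;R)$ of length at most $\max_{\sigma \in \Sigma_p} d_\sigma$.

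The core step is the Cartan--Eilenberg assembly: by successively lifting the cellular boundary maps to chain maps $P_{\ast,p} \to P_{\ast,p-1}$, one organizes the $P_{q,p}$ into a first-quadrant double complex of projective $RG$-modules whose total complex $\mathrm{Tot}(P_{\ast,\ast})$ is, via the standard column-first spectral sequence, quasi-isomorphic to the augmentation target $R$. Its length is bounded by
\[\max\{p + d_\sigma : p \geq 0,\ \sigma \in \Sigma_p\} = N,\]
which gives the desired inequality $\cd_R(G) \leq N$.

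The main technical friction I expect is the Cartan--Eilenberg totalization itself: verifying that the successive lifts assemble into a genuine (anti)commuting double complex, and that $\mathrm{Tot}$ remains a direct sum rather than a product of projectives in each degree. Both are automatic in the standard construction provided $N$ is finite, since then only finitely many antidiagonals contribute to each total degree. For the precise bookkeeping I would appeal directly to Brown's Proof~1 of Proposition~1.1 rather than redo the homological algebra, since the authors themselves flag this as the reference.
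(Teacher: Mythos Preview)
Your proposal is correct and is exactly the argument the paper has in mind: the authors do not give their own proof but simply refer to Brown's Proof~1 of Proposition~1.1, which is precisely the Cartan--Eilenberg double-complex argument you describe. The only cosmetic point is that in general one needs to tensor $R[G/G_\sigma]$ with an orientation module $R_\sigma$ to account for stabilizers reversing orientation, but under the paper's convention that a $G$-CW-complex has setwise stabilizers fixing cells pointwise this is indeed trivial, as you note.
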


In this section we prove the following generalization of the above inequality for cohomological and geometric dimension relative to families of subgroups. The proof of the cohomological part of this is adapted from an argument by Fluch and Nucinkis~\cite[Proof of Proposition 4.1]{FN13}.
The geometric part is implicit in \cite[Proof of Theorem~3.1]{Lu00}. 

\begin{proposition}\label{coro:haefliger}
Let $G$ be a group. Let $\calF$ and $\calG$ be families of subgroups of $G$ such that $\calF\subseteq \calG$. If $X$ is a model for $E_\calG G$, then
\[\gd_{\calF}(G)\leq \max\{\gd_{\calF\cap G_\sigma}(G_{\sigma})+\dim(\sigma)| \ 
\sigma \text{ is a cell of } \ X \}\]
and 
\[\cd_{\calF}(G)\leq \max\{\cd_{\calF\cap G_\sigma}(G_{\sigma})+\dim(\sigma)| \ 
\sigma \text{ is a cell of } \ X \}.\]
\end{proposition}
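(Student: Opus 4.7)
The plan is to prove both inequalities from a single equivariant construction: I would ``fatten'' each cell-orbit of $X$ by a model for the classifying space of its stabilizer, obtaining a $G$-CW-model for $E_\calF G$ of the desired dimension, and then deduce the cohomological inequality from a Bredon-cohomology spectral sequence associated to the skeletal filtration of $X$.

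For the geometric inequality I would first pick, for each $G$-orbit of cells with representative $\sigma$, a minimal-dimensional model $Y_\sigma$ for $E_{\calF\cap G_\sigma}(G_\sigma)$; note that $\calF\cap G_\sigma$ is a family on $G_\sigma$ because $G_\sigma \in \calG\supseteq \calF$ and $\calF$ is closed under subgroups and conjugation. I would then inductively build a $G$-CW-complex $Y$ by replacing each equivariant cell $G\times_{G_\sigma} D^{\dim\sigma}$ of $X$ by $G\times_{G_\sigma}(Y_\sigma\times D^{\dim\sigma})$, glued along its boundary via a $G_\sigma$-equivariant extension of the original attaching map. By construction every cell of $Y$ has isotropy in $\calF$ and
\[ \dim Y \leq \max_{\sigma}\bigl(\gd_{\calF\cap G_\sigma}(G_\sigma)+\dim\sigma\bigr). \]
To conclude that $Y$ is a model for $E_\calF G$, I would verify that $Y^H$ is contractible for every $H\in\calF$. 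The collapse map $p\colon Y\to X$ that contracts each $Y_\sigma$-factor is $G$-equivariant and restricts to $p^H\colon Y^H\to X^H$; since $\calF\subseteq\calG$, the base $X^H$ is contractible, while over each open cell of $X^H$ with stabilizer $G_\sigma$ the fibre of $p^H$ is $Y_\sigma^H$, contractible because $H\in\calF\cap G_\sigma$. A cell-by-cell induction on the skeletal filtration of $X^H$ then shows $p^H$ is a weak equivalence, so $Y^H$ is contractible.

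For the cohomological inequality I would use the equivariant cellular spectral sequence associated to the skeletal filtration of $X$ in Bredon $\calF$-cohomology, following the approach of Fluch--Nucinkis in the proof of their Proposition~4.1. For every $\orf{G}$-module $M$ it takes the form
\[ E_1^{p,q} \;=\; \prod_{\sigma\in X_p/G} H^{q}_{\calF\cap G_\sigma}\bigl(G_\sigma;\,M|_{G_\sigma}\bigr) \;\Longrightarrow\; H^{p+q}_{\calF}(G;M), \]
and it converges to $H^*_\calF(G;M)$ precisely because $X^H$ is contractible for every $H\in\calF$ (which is where the hypothesis $\calF\subseteq\calG$ enters). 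The term $E_1^{p,q}$ vanishes when $p>\dim X$ or when $q>\cd_{\calF\cap G_\sigma}(G_\sigma)$ for every $\sigma$ of dimension $p$, so $H^n_\calF(G;M)=0$ for $n>\max_\sigma\bigl(\cd_{\calF\cap G_\sigma}(G_\sigma)+\dim\sigma\bigr)$, giving the claimed bound on $\cd_\calF(G)$.

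The main obstacle, I expect, is the verification that $Y^H$ is contractible for every $H\in\calF$: this is where the hypothesis $\calF\subseteq\calG$ plays its essential role, through the contractibility of $X^H$, and one has to set up the equivariant skeletal induction carefully so that the ``fibre bundle with contractible fibres'' heuristic for $p^H\colon Y^H\to X^H$ is made rigorous. Once that weak equivalence is in hand, both parts of the proposition fall out as indicated above.
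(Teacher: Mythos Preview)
Your proposal is correct and essentially the same as the paper's proof: your geometric construction is exactly the Haefliger--L\"uck construction that the paper invokes as a black-box theorem (from \cite{MPSS20}), and for the cohomological part you phrase the Fluch--Nucinkis argument via the associated spectral sequence, whereas the paper equivalently restricts the Bredon cellular chain complex of $X$ to the orbit category $\calO_\calF G$ and assembles projective resolutions of the restricted free modules directly. The only substantive point you flag as an obstacle---that $p^H\colon Y^H\to X^H$ is a homotopy equivalence for $H\in\calF$---is precisely conclusion~(3) of the cited Haefliger--L\"uck theorem, so there is no gap.
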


The next theorem taken from \cite[Theorem~2.3]{MPSS20} will be used to prove \cref{coro:haefliger}. Given a group $G$, a family of subgroups $\calG$ and a 
$G$-CW-complex,
 we say that $X$ is a $\calG$-G-CW-complex if all isotropy groups of $X$ belong to $\calG$. 

\begin{theorem}[The Haefliger-L\"uck construction for families]\label{thm:haefliger}
Let $G$ be a group. Let $\calF$ and $\calG$ be families of subgroups of $G$ such that $\calF\subseteq \calG$. Consider a $\calG$-$G$-CW-complex $X$. For each cell $\sigma$ of $X$, fix models $X_\sigma$ for $E_{\calF\cap G_\sigma} G_\sigma$ so that for any two cells in the same $G$-orbit the same model has been chosen. Then, for each $n\geq 0$ there exists an $\calF$-$G$-CW-complex $\hat X_n$ and a $G$-map $f_n\colon \hat X_n\to X^{(n)}$ such that:
\begin{enumerate}
    \item We have $\hat X_{n-1}\subseteq \hat X_n$ and $f_n$ restricted to $\hat X_{n-1}$ is $f_{n-1}$.
    \item For every open simplex $\sigma$ of $X^{(n)}$, $f_n^{-1}(\sigma)$ is a model for $E_{\calF\cap G_\sigma} G_\sigma$.
    \item For all $H\in \calF$, $f_n^H\colon \hat X_n^H\to (X^{(n)})^H$ is a (nonequivariant) homotopy equivalence.
    \item There is a $G$-equivariant homeomorphism between $f_n^{-1}(\sigma)$ and $X_\sigma \times \sigma$.
    \end{enumerate}
\end{theorem}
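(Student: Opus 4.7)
The plan is to build $\hat X_n$ by induction on $n$, replacing each cell $\sigma$ of $X^{(n)}$ by the product $X_\sigma \times \sigma$ and gluing these products together equivariantly along attaching maps; this is essentially the classical Haefliger--Lück construction, adapted from the proper case to arbitrary families $\calF \subseteq \calG$.

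For the base case $n=0$, pick one representative $v$ from each $G$-orbit of $0$-cells and set
\[
\hat X_0 \;=\; \bigsqcup_{v} G \times_{G_v} X_v,
\]
with $f_0$ collapsing each factor $X_v$ to $v$. Since $X_v$ is a model for $E_{\calF \cap G_v}(G_v)$, its $H$-fixed set is contractible whenever $H \in \calF \cap G_v$ and empty otherwise, so properties (1)--(4) are immediate at this stage. For the inductive step, assume $\hat X_{n-1}$ and $f_{n-1}$ have been constructed satisfying (1)--(4). For each orbit representative $\sigma$ of an $n$-cell with attaching map $\phi_\sigma : \partial \sigma \to X^{(n-1)}$, choose a model $X_\sigma$ for $E_{\calF \cap G_\sigma}(G_\sigma)$, and look for a $G_\sigma$-equivariant lift $\tilde{\phi}_\sigma : X_\sigma \times \partial \sigma \to \hat X_{n-1}$ covering $\phi_\sigma$. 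Granted such a lift, define
\[
\hat X_n \;=\; \hat X_{n-1} \;\cup\; \bigsqcup_\sigma G \times_{G_\sigma} \bigl( X_\sigma \times \sigma \bigr),
\]
glued along $\bigsqcup_\sigma G \times_{G_\sigma}(X_\sigma \times \partial\sigma)$ using the $G$-extensions of the maps $\tilde{\phi}_\sigma$, and extend $f_{n-1}$ to $f_n$ by sending $(x, t) \in X_\sigma \times \sigma$ to $t \in \sigma$. Properties (1), (2) and (4) are built in by construction.

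The main obstacle is producing the equivariant lift $\tilde{\phi}_\sigma$ and, simultaneously, verifying property (3). The lift exists because, by inductive property (3), the restriction of $\hat X_{n-1}$ to $\phi_\sigma(\partial\sigma)$ has the homotopy type of $\phi_\sigma(\partial\sigma)$ on every $H$-fixed set for $H \in \calF$; applying the equivariant obstruction theory of Bredon--Illman (or, equivalently, the universal property defining $E_{\calF \cap G_\sigma}(G_\sigma)$ up to $G_\sigma$-homotopy), one constructs $\tilde{\phi}_\sigma$ cell by cell over $\partial\sigma$ since all relevant obstruction groups vanish. For property (3) at stage $n$, fix $H \in \calF$; the pushout defining $\hat X_n$ restricts to a pushout on $H$-fixed sets whose two ``new'' arms are $\bigsqcup_\sigma (G/G_\sigma)^H \cdot (X_\sigma^H \times \sigma)$ and $\bigsqcup_\sigma (G/G_\sigma)^H \cdot (X_\sigma^H \times \partial\sigma)$. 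Since $X_\sigma^H$ is contractible when $H \subseteq G_\sigma$ up to conjugacy and empty otherwise, each of these pieces is homotopy equivalent to the corresponding piece of the pushout defining $(X^{(n)})^H$ out of $(X^{(n-1)})^H$, and a gluing lemma for homotopy equivalences of cofibrations yields that $f_n^H$ is a homotopy equivalence.

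The delicate part is really a bookkeeping one: making the choices of $X_\sigma$ and lifts $\tilde{\phi}_\sigma$ simultaneously $G$-equivariant across an entire orbit of cells (so that the same $X_\sigma$ is chosen for every cell in an orbit, and lifts over different boundary cells are compatible). This is handled in the standard way by fixing orbit representatives and extending by the $G$-action, so no genuinely new argument is required beyond the equivariant obstruction theory above; the remaining verifications of (1), (2) and (4) follow from the explicit form of the pushout.
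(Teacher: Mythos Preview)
The paper does not prove this theorem; it is quoted from \cite[Theorem~2.3]{MPSS20} and used as a black box in the proof of \cref{coro:haefliger}. There is therefore nothing in the present paper to compare your argument against.

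That said, your inductive pushout construction is the standard Haefliger--L\"uck argument (cf.\ \cite[Proof of Theorem~3.1]{Lu00} for the proper case, which the paper also cites) and is correct in outline. The existence of the $G_\sigma$-equivariant lift $\tilde\phi_\sigma$ is usually justified a bit more concretely than via generic obstruction theory: since $G_\sigma$ fixes $\sigma$ pointwise it also fixes $\partial\sigma$ pointwise, so for every cell $\tau$ in the image of $\phi_\sigma$ one has $G_\sigma\leq G_\tau$, and hence the restriction of $X_\tau$ to $G_\sigma$ is again a model for $E_{\calF\cap G_\sigma}G_\sigma$; the universal property of classifying spaces then produces the required $G_\sigma$-maps $X_\sigma\to X_\tau$, unique up to $G_\sigma$-homotopy, and these are assembled over the skeleta of $\partial\sigma$. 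Your gluing-lemma verification of property~(3) on $H$-fixed points is the expected one. In short, your sketch matches the approach of the cited source.
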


\begin{proof}[Proof of \Cref{coro:haefliger}] 
For each $n\geq 0$ let $\hat X_n$ be the spaces provided by \cref{thm:haefliger} with $X_\sigma$ models of minimal dimension for each cell $\sigma$ of $X$, i.e. $\dim(X_\sigma)=\gd_{\calF\cap G_\sigma}(G_\sigma)$. Denote $\hat X=\bigcup_{n=0}^\infty \hat{X}_n$. As consequence of hypothesis (3) of \cref{thm:haefliger} we have that  $\hat X^K$ is contractible for every $K\in \calF$, therefore $\hat X$ is a model for $E_\calF G$. By hypothesis (4) of \cref{thm:haefliger} we have that
\[\dim(\hat X)=\max\{\gd_{\calF\cap G_\sigma}(G_{\sigma})+\dim(\sigma)| \ 
\sigma \text{ is a cell of } \ X \}\]
and the statement for $\calF$-geometric dimension follows.

Now we prove the statement for the $\calF$-cohomological dimension. Denote by $\mathrm{res}^\calG_\calF\colon \calO_\calG G\text{-mod}\to \calO_\calF G\text{-mod}$ the natural restriction functor. For $P$  a $\calO_\calF$-module, denote by $\mathrm{pd}_{\calF}(P)$ the projective dimension of $P$, i.e. the smallest number $n$ such that $P$ admits a $\calO_\calF G$-projective resolution of length $n$.

The augmented Bredon cellular complex of $\hat X$
\[\cdots\to \bigoplus_{\text{2-cells}} \dbZ[-,G/G_{\sigma^2}]\to \bigoplus_{\text{1-cells}}\dbZ[-,G/G_{\sigma^1}] \to \bigoplus_{\text{0-cells}}\dbZ[-,G/G_{\sigma^0}]\to \dbZ_\calG\to 0\]
is a free resolution of $\dbZ_\calG$ of length $\dim(\hat X)$. Following the proof of Proposition 4.1 in \cite{FN13} we have that
\[\cdots\to  \bigoplus_{\text{1-cells}}\mathrm{res}_\calF^\calG \dbZ[-,G/G_{\sigma^1}] \to  \bigoplus_{\text{0-cells}}\mathrm{res}_\calF^\calG\dbZ[-,G/G_{\sigma^0}]\to \mathrm{res}_\calF^\calG(\dbZ_\calG)=\dbZ_\calF\to 0\]
is a resolution for $\dbZ_\calF$ in $\calO_\calF G$-mod, and  $\mathrm{pd}_\calF(\mathrm{res}_\calF^\calG \dbZ[-,G/G_{\sigma}])=\cd_{\calF\cap G_{\sigma}}(G_\sigma)$  for each cell $\sigma$  of $\hat X$. Now, a standard argument let us assemble all the projective resolutions of the $\mathrm{res}_\calF^\calG \dbZ[-,G/G_{\sigma}]$ to obtain to obtain a projective resolution of $\dbZ_\calF$ of dimension 
\[\max\{\cd_{\calF\cap G_\sigma}(G_{\sigma})+\dim(\sigma)| \ 
\sigma \text{ is a cell of } \ X \}.\qedhere\]
\end{proof}

\subsection{Proof of Theorem~\ref{cor:closed:under:takings:graphs}}

\begin{proof}
If $G$ is the fundamental group of a graph of groups with  finite edge groups, then $G$ acts on its Bass-Serre tree $T$ with finite edge stabilizers. For every subgroup $H$ of $G$, the fixed point set $T^H$ is either empty or it is convex (and therefore contractible), thus $T$ is a classifying space for $G$ with respect to the smallest family $\calG$ of $G$ that contains all vertex subgroups. It is well-known that every finite group acting on a tree fixes at least one point, thus the family $\calF$ of finite subgroups of $G$ is contained in $\calG$.  By \cref{coro:haefliger} and the fact that $\gdfin(F)=0$ for every finite group $F$, we obtain $\gdfin(G)\leq \max\{1,\gdfin(G_{\sigma})| \ 
\sigma \text{ is a vertex of } \ X \}$. This proves the first claim.

For the second claim note that the action of $G$ on $T$ is acylindrical, that is, the stabilizer of every path of length at least 1 is finite. By \cite[Theorem~6.3]{LASS21} there exists a family $\calH$ of subgroups of $G$ and a 2-dimensional model $X$ for $E_{\calH}G$ such that  $\vcyc\subseteq \calH$. Such a a model $X$ is obtained from $T$ after coning-off all geodesics that admit a cocompact action of a virtually cyclic subgroup of $G$, and the stabilizer of every cone point is virtually cyclic as a consequence of the acylindricity of the action of $G$ on $T$. Now we proceed to analyze the dimension of stabilizers of cells of $X$. A 0-cell it is either a vertex of $T$ or it is a cone point. Each cone point has virtually cyclic stabilizer, thus it has virtually cyclic dimension 0. A 1-cell it is either an edge of $T$ or connects a vertex of $T$ with a cone point. In the first case the stabilizer is finite and has virtually cyclic dimension 0, while in the second case the stabilizer of the edge also stabilizes the a cone point, thus it also has virtually cyclic dimension 0. Every 2-cell of $X$ contains in its boundary a cone point, thus its stabilizer also has virtually cyclic dimension 0.
Now we can apply \cref{coro:haefliger} to $X$ and $\vcyc\subseteq \calH$ to obtain $\gdvc(G)\leq \max\{2,\gdvc(G_{\sigma})| \ 
\sigma \text{ is a vertex of } \ T \}$.

For the inequalities for cohomological dimension we run the same proof using the cohomological conclusions of \cref{coro:haefliger}.
\end{proof}

\subsection{Proof of \cref{cor:megafinal-body}}

The result is obtained by putting together the inequalities for $\gdvc(G)$ and $\cdvc{G}$ given by \Cref{cor:closed:under:takings:graphs} and \Cref{lemma:bound:dimensions} below. Let us remark that the geometric part of \Cref{lemma:bound:dimensions} is based on an argument by Juan-Pineda and Leary~\cite{JPL06}  for the particular case that $G$ is a hyperbolic group. The cohomological part of \Cref{lemma:bound:dimensions} takes ideas from Fluch and Leary in~\cite{FL14}.

\begin{proposition}\label{lemma:bound:dimensions}
Let $G$ be a group that has small centralizers, and satisfies the ascending chain condition for finite subgroups. Then
\[\gdvc(G)\leq \max\{\gdfin(G),2\},\]
and 
\[\cdvc(G)\leq \max\{\cdfin(G),2\}.\]
\end{proposition}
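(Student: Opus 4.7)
The proof proceeds by applying the pushout construction of L\"uck--Weiermann to the pair of families $\fin \subset \vcyc$, adapting the approach that Juan-Pineda and Leary~\cite{JPL06} used for hyperbolic groups. The first task is to show that under the two hypotheses, every infinite virtually cyclic subgroup $V \leq G$ is contained in a \emph{unique} maximal infinite virtually cyclic subgroup $M$, and each such $M$ is self-normalizing, i.e.\ $N_G(M)=M$. Indeed, picking an infinite cyclic $C \leq V$ of finite index, the small centralizers hypothesis forces $C_G(C)$ (hence $N_G(C)$, which contains $C_G(C)$ with index at most two) to be virtually cyclic; any infinite virtually cyclic overgroup of $V$ commensurates $C$ and so lies in $N_G(C)$, giving the unique maximum $M$. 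Self-normalization then follows by combining maximality of $M$ with the ACC on finite subgroups applied to the finite normal subgroups of any would-be strict extension of $M$.

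Let $\calM$ be a set of representatives for the conjugacy classes of maximal infinite virtually cyclic subgroups of $G$. The equivalence relation on $\vcyc\setminus\fin$ given by ``shares a common maximal overgroup'' is a strong equivalence relation in the sense of L\"uck--Weiermann, with equivalence-class stabilizers exactly the $M\in\calM$. For each such $M$, the induced family on $M$ is generated by $\fin|_M\cup\{M\}$, so its classifying space collapses to a point. The L\"uck--Weiermann construction then provides a $G$-pushout producing a model for $\evc G$:
\[
\begin{tikzcd}
\coprod_{M\in\calM} G\times_M \underline{E}M \ar[r] \ar[d] & \underline{E}G \ar[d] \\
\coprod_{M\in\calM} G/M \ar[r] & \evc G.
\end{tikzcd}
\]

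For the geometric inequality, realize the pushout as a double mapping cylinder. Since each $M\in\calM$ is infinite virtually cyclic, $\underline{E}M$ can be taken to be $\mathbb{R}$, so $\coprod G\times_M\underline{E}M$ has dimension one, while $\underline{E}G$ has dimension $\gdfin(G)$ and $\coprod G/M$ has dimension zero. The total dimension is at most $\max\{1+1,\,\gdfin(G),\,0\}=\max\{\gdfin(G),\,2\}$. For the cohomological inequality, apply the Mayer--Vietoris long exact sequence in Bredon cohomology relative to $\vcyc$ to the same pushout. Each $G/M$ has a $\vcyc$-stabilizer, so $H^{\geq 1}_{\vcyc}$ vanishes on it; the $G$-CW-complex $\coprod G\times_M\underline{E}M$ has finite cell stabilizers and dimension one, so its Bredon cochain complex is a length-one complex of projective $\calO_{\vcyc}G$-modules built from finite stabilizers, giving vanishing above $\cdfin(G)+1$; and $\underline{E}G$ contributes vanishing above $\cdfin(G)$. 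Combining these in the long exact sequence yields $H^n_\vcyc(\evc G;-)=0$ for $n>\max\{\cdfin(G),2\}$, hence $\cdvc(G)\leq\max\{\cdfin(G),2\}$.

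The main obstacle is really Step one: verifying that the combination of small centralizers and the ascending chain condition on finite subgroups yields the uniqueness of maximal infinite virtually cyclic overgroups and the self-normalization property, which are precisely what is needed to put the L\"uck--Weiermann machine in the clean form used above. Once the structural claim is in hand, the remaining dimension counts are the routine double-mapping-cylinder and Mayer--Vietoris bookkeeping sketched above, and the two inequalities drop out simultaneously.
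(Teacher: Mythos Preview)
Your overall strategy matches the paper's: both use the L\"uck--Weiermann pushout with the set $\calM$ of maximal infinite virtually cyclic subgroups (each self-normalizing) and then read off the dimension bounds; the geometric half of your argument is essentially identical to the paper's. Two points deserve comment.

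First, the structural claim. The paper does not argue this directly but simply cites \cite[Theorem~3.1]{LW12}. Your sketch contains a gap: from ``$W$ commensurates $C$'' you cannot conclude $W\subseteq N_G(C)$. An infinite virtually cyclic overgroup $W\supseteq C$ need not normalize $C$ itself, only some finite-index subgroup of $C$, so a priori $W$ lies only in the commensurator $\mathrm{Comm}_G(C)$. Showing that this commensurator is virtually cyclic requires controlling an ascending union of normalizers and is where the ascending chain condition is genuinely needed, not only in the self-normalization step. This can be carried out, but it is the actual content of \cite[Theorem~3.1]{LW12} rather than a triviality.

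Second, the cohomological inequality. The paper does not use Mayer--Vietoris here; instead it runs the argument of \cite[Theorem~7]{FL14}: one writes the short exact sequence $0\to\bar\dbZ_{\fin}\to\dbZ_{\vcyc}\to Q\to 0$ of $\calO_{\vcyc}G$-modules, notes that the relative Bredon chain complex $C_*(Y,\underline{E}G)$ gives a length-$2$ projective resolution of $Q$, promotes a length-$\cdfin(G)$ projective resolution of $\dbZ_{\fin}$ to one for $\bar\dbZ_{\fin}$, and applies the Horseshoe lemma. Your Mayer--Vietoris alternative is in principle viable, but the bound you record for $\coprod_{M} G\times_M\underline{E}M$ is too weak to close the argument: with ``vanishing above $\cdfin(G)+1$'' the long exact sequence only yields $\cdvc(G)\leq\cdfin(G)+2$. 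The correct (and simpler) bound is that this $G$-space is $1$-dimensional, so its Bredon cohomology vanishes in degrees $\geq 2$ for purely dimensional reasons; with that input the Mayer--Vietoris sequence does give $H^n_{\vcyc}(Y;-)=0$ for $n>\max\{\cdfin(G),2\}$.
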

\begin{proof}
By our hypothesis on $G$ and \cite[Theorem~3.1]{LW12} we have that every infinite virtually cyclic subgroup of $G$ is contained in a unique maximal virtually cyclic subgroup of $G$, and $N_G(V)=V$ for every infinite maximal virtually cyclic subgroup of $G$. Thus by \cite[Corollary~2.11]{LW12} we obtain a model for $\evc G$ as the $G$-CW-complex $Y$ defined by the following homotopy $G$-pushout
\begin{equation*}
\begin{tikzcd}
\displaystyle\bigsqcup_{V\in \mathcal{M}}G\times_{V}\underline{E}V \arrow[r, "g"] \arrow[d, "f"]
& \underline{E}G  \arrow[d ] \\
\displaystyle\bigsqcup_{V\in \mathcal{M}}G/V \arrow[r ]
& Y
\end{tikzcd}
\end{equation*}
where $\mathcal M$ is a complete system of representatives of the conjugacy classes of maximal infinite virtually cyclic subgroups of $G$, $g$ is the inclusion map, and $f$ is the disjoint union of the projection maps $G\times_V\underline{E}V\to G/V$. Hence if we take each $\underline{E}V$ to be the real line, and $\underline{E}G$ a space of dimension $\gdfin(G)$, then $Y$ is obtained from $\underline{E}G$ by coning-off certain lines (the images of the maps $\underline{E}V\to \underline{E}G$). Since $Y$ is already a model for $\evc G$ of dimension $\max\{\gdfin(G),2\}$  the first claim follows.

For the second claim, we can run verbatim the argument in the proof of Theorem 7 in \cite{FL14}. We include this argument for the sake of completeness. Let $\calF$ and $\calG$ be the families of finite subgroups and virtually cyclic subgroups of $G$ respectively. Consider the following short exact sequence of $\calO_\calG G$-modules
\[1\to \bar\dbZ_{\calF}\to \dbZ_{\calG} \to Q\to 1\]
where $Q$ is the $\calO_\calG G$-module given by $Q(G/H)=\dbZ$ if $H\not\in \calF$ and $Q(G/H)=0$ if $H\in \calF$, and $\bar\dbZ_{\calF}$ is the $\calO_\calG G$-module given by $\bar\dbZ_\calF(G/H)=\dbZ$ if $H\in \calF$ and $\bar\dbZ_\calF(G/H)=0$ if $H\not\in \calF$. Since  $Y$ is a model for $\underline{\underline{E}}G$ obtained from a model $X$ for $\underline{E}G$ by attaching cells of dimension at most $2$, the Bredon cellular chain complex $C_*(Y,X)$ yields a  projective resolution for $Q$ of length $2$. Let $\mathbf{P}_*\to \dbZ_\calF$ be a projective resolution of length $\cdfin(G)$, and promote it to a projective resolution $\bar{\mathbf{P}}_*$ for $\bar\dbZ_\calF$ with the same length by setting $\bar{\mathbf{P}}_*(G/H)=0$ for every $H\not\in \calF$.  We can apply the Horseshoe lemma to the resolutions $C_*(Y,X)$ and $\bar{\mathbf{P}}_*$ to  obtain a projective resolution for $\dbZ_{\calG}$ of length $\max\{\cdfin(G),2\}$.  This concludes the proof.
\end{proof}

\section{Proper and virtually cyclic dimensions of small cancellation quotients of free products}\label{sec:Thn1.5}

This section contains the proof of the following statement.

\begin{theorem}\label{pre-Machine-body} 
Let $A$ and $B$ be groups and let $C$ be a common finite subgroup.  Let $\mathcal R$ be a finite symmetrized subset of $A\ast_C B$ that satisfies the $C'(1/12)$ small cancellation condition, and $G = (A\ast_C B)/ \nclose{\mathcal R}$. Assume $G$ is not virtually free. Then the following conclusions hold.
\begin{enumerate}
    \item $ \gdfin(G) =\max\{ \gdfin(A),\gdfin(B), 2\} \text{ and }   \cdfin(G) =\max\{ \cdfin(A),\cdfin(B), 2\}$
    
     \item If  $A$ and $B$ are finitely generated, have small centralizers and satisfy the  ascending chain condition for finite subgroups, then $G$ has small centralizers, satisfies the ascending chain
    condition, and
     \[\max\{\gdvc(A),\gdvc(B),2\}\leq \gdvc(G)\leq\max\{\gdfin(A),\gdfin(B),2\}.\]
     and
    \[\max\{\cdvc(A),\cdvc(B),2\}\leq \cdvc(G)\leq\max\{\cdfin(A),\cdfin(B),2\}.\]
    
    \item For any ring $R$, we have $ \cd_{R}(G) =\max\{ \cd_{R}(A),\cd_{R}(B), 2\} .$ 
\end{enumerate}
\end{theorem}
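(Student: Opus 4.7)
My plan is to reduce all three parts of \Cref{pre-Machine-body} to the existence of a two-dimensional contractible $G$-CW-complex $X$ whose cell stabilizers are, up to conjugacy in $G$, either $A$, $B$, the finite subgroup $C$, or trivial. I would construct $X$ from the Bass-Serre tree of $A\ast_C B$ by forming the Cayley-type complex of the presentation of $G$ relative to $A\ast_C B$: one attaches a $G$-equivariant family of $2$-cells indexed by $\mathcal R$ whose boundaries spell out the relators. The heart of the construction is to verify, using the $C'(1/12)$ small cancellation condition, that $X$ is contractible and that the fixed-point set $X^H$ is contractible for every $H\leq G$ conjugate into $A$ or $B$. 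This is where small cancellation enters essentially, via disk-diagram arguments in the spirit of~\cite[Ch.~V]{LySc01}; the strengthening from $C'(1/6)$ to $C'(1/12)$ is invoked precisely to control the two-dimensional homotopy. Once this is established, $X$ is a model for $E_{\mathcal G}G$, where $\mathcal G$ is the family of subgroups of $G$ conjugate into $A$ or $B$.

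For Part (1), I apply \Cref{coro:haefliger} with $\mathcal F=\fin\subseteq\mathcal G$: since $\gdfin(C)=\cdfin(C)=0$, the maximum over cells collapses to $\max\{\gdfin(A),\gdfin(B),2\}$, and identically for $\cdfin$. The matching lower bound follows from \Cref{thm:filling}, which provides inclusions $A,B\hookrightarrow G$, and from Dunwoody's characterization of groups with $\cdfin\leq 1$ as virtually free: the hypothesis that $G$ is not virtually free forces $\cdfin(G)\geq 2$. For Part (2), I would first verify that $G$ inherits the small centralizer property and the ascending chain condition on finite subgroups from $A$ and $B$; this is a standard output of small cancellation theory for products, as every finite subgroup of $G$ and every centralizer of an infinite-order element is conjugate into a factor (see~\cite[Ch.~V]{LySc01}). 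Part (1) combined with \Cref{lemma:bound:dimensions} then yields the upper bounds on $\gdvc(G)$ and $\cdvc(G)$, while the lower bounds come from subgroup monotonicity applied to $A,B\hookrightarrow G$ together with the fact that $G$ is not virtually cyclic.

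For Part (3), I apply the Bieri-type inequality \Cref{prop:baby:haefliger} to the $R$-acyclic complex $X$, obtaining
\[\cd_R(G)\leq\max\{\cd_R(A),\cd_R(B),\cd_R(C)+1,2\}.\]
Since $C$ is finite, Bieri's theorem forces $\cd_R(C)\in\{0,\infty\}$ depending on whether $|C|$ is invertible in $R$, and both contingencies collapse the bound to the stated $\max\{\cd_R(A),\cd_R(B),2\}$: if $\cd_R(C)=0$ then $\cd_R(C)+1=1\leq 2$, while if $\cd_R(C)=\infty$ then also $\cd_R(A)=\infty$. The matching lower bound comes from $A,B\hookrightarrow G$ together with a Stallings-Swan-type argument ruling out $\cd_R(G)\leq 1$. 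The principal obstacle I anticipate is the verification that $X$ is contractible with contractible fixed-point sets under $C'(1/12)$; once that is established, the three conclusions follow by assembly from the dimensional inequalities of \Cref{sec:dimensional:bounds}.
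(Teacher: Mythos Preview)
Your overall strategy coincides with the paper's: the complex $X$ you describe is precisely the coned-off Cayley complex $\hat X$ of \Cref{prop:ConedOffCayleyComplex}, and the upper bounds in all three parts follow exactly as you outline via \Cref{coro:haefliger}, \Cref{lemma:bound:dimensions}, and \Cref{prop:baby:haefliger}. The lower-bound arguments are likewise the same in spirit.

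There is, however, a genuine error in your justification of the small-centralizer property in Part~(2). You assert that \emph{every centralizer of an infinite-order element is conjugate into a factor}, but this is false: if $g\in G$ has infinite order and is not conjugate into $A$ or $B$, then $\langle g\rangle\leq C_G(g)$ is itself not conjugate into either factor. The paper's argument (\Cref{lemma:small:centralizers}) treats two cases separately. For $g$ conjugate into a factor, say $g\in A$, almost malnormality of $\{A,B\}$ (a consequence of $G$ being hyperbolic relative to $\{A,B\}$) forces $C_G(g)=C_A(g)$, and then the small-centralizer hypothesis on $A$ applies. For $g$ not conjugate into any factor, one invokes \cite[Theorem~1.14]{osin} on elementary subgroups in relatively hyperbolic groups to conclude directly that $\langle g\rangle$ has finite index in $C_G(g)$. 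Neither step is available from \cite[Ch.~V]{LySc01} alone; relative hyperbolicity of small cancellation products is the essential extra input.

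A smaller point: in order to apply \Cref{coro:haefliger} with $\calF=\fin$ you need $\fin\subseteq\calG$, i.e., that every finite subgroup of $G$ is conjugate into $A$ or $B$. This is not automatic from the construction of $X$, and the paper invokes \cite[Proposition~5.7]{HaMa13} (finite groups acting on $C'(1/6)$ complexes have global fixed points) to establish it.
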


\subsection{A Cocompact  Model for Small Cancellation Quotients}\label{subsect:small:cancellation}

For a small cancellation product $G=(A\ast B)/\nclose{\mathcal R}$ there is a natural family of subgroups to consider, namely, the smallest family $\calF$ that contains $\{A,B\}$. In this section we describe a canonical  2-dimensional model for $E_\calF G$.
 
\begin{definition}[Coned-off Cayley complex $\hat X$]
Let $A$ and $B$ be  groups, let $C$ be a common finite subgroup, and let $\mathcal R$ be a finite  subset of $A \ast_C B$. Let $G=(A \ast_C B)/\nclose{\mathcal R}$. Suppose that 
 the natural homomorphisms $A \rightarrow G$ and $B \rightarrow G$ are injective, and  regard
    $A$ and $B$ as subgroups of $G$.
    
    Let \emph{$\hat \Gamma$  } be the $G$-graph with vertex set the $G$-set of left cosets of $A$ and $B$ in $G$, and edge set   $\left\{\{gA,gB\}\colon g\in  G \right\} $. Equivalently, if   $T$ is the Bass-Serre tree of $A\ast_C B$, then $\hat\Gamma = T/\nclose{\mathcal R}$. Note that $\hat \Gamma$ is a cocompact connected $G$-graph with finite edge stabilizers. Moreover, the stabilizer of every vertex is either a  a conjugate of $A$ or a conjugate of $B$. 
    
    The \emph{coned-off Cayley complex $\hat X$} of $G$ is a $2$-dimensional $G$-complex  with $1$-skeleton $\hat\Gamma$ defined as follows. Since  the natural morphisms $A\to G$ and $B \to G$ are injective,  the subgroup $\nclose{\mathcal R}$  intersects trivially with  $A$ and $B$. Thus the action of $N=\nclose{\mathcal R}$ on the Bass-Serre tree $T$ is free, and the quotient map $\rho\colon T\to T/N$ is a covering map. Fix a vertex $x_0$  of $T$ and consider it as a base point. Then any element $g$ of $ \nclose{R}$ induces a unique path $\alpha_g$ from $x_0$ to $gx_0$. Let $\gamma_g=\rho\circ \alpha_g$ be the closed path in $\hat \Gamma$ induced by $\alpha_g$ based at $\rho(x_0)$. This induces an isomorphism  $N\to \pi_1(\hat\Gamma, \rho(x_0))$ defined by $g\mapsto \gamma_g$.  For $g\in G$ and $h\in N$,  let $g\cdot\gamma_h$ be the translated closed path in $\hat \Gamma$ without an initial point, i.e., these are cellular maps from $S^1 \to \hat\Gamma$. Consider the $G$-set $\Omega=\{g.\gamma_{r}\mid r\in \mathcal R, g\in G\}$ of closed paths in $\hat\Gamma$. The complex $\hat X$  is then  obtained by attaching a $2$-cell to $\hat\Gamma$ for every closed path in $\Omega$. In particular, no pair of distinct $2$-cells of $\hat X$ have the same boundary path, and the pointwise $G$-stabilizer  of a $2$-cell of $\hat X$ coincides with the pointwise $G$-stabilizer of its boundary path (in particular they are finite subgroups). The natural isomorphism from $N$ to $\pi_1(\hat X^{(1)}, \rho(x_0))$ implies that $\hat X$ is simply connected. Moreover,  the $G$-action is cocompact since $\mathcal R$ is finite.
\end{definition} 

 \begin{proposition}\label{prop:ConedOffCayleyComplex}
 Let $A$ and $B$ be groups, let $C$ be a common finite subgroup, and let $R$ be a finite symmetrized subset of $A \ast_C B$ that satisfies the $C'(1/12)$ small cancellation condition. Let $G=(A\ast_C B)/\nclose{\mathcal R}$.  If $\mathcal{F}$ is the family of subgroups generated by the $G$-stabilizers of cells of $\hat X$, then $\hat X$ is a 2-dimensional cocompact model for  $E_\mathcal{F} G$ such that 
 the stabilizers of 1 and 2-cells are finite, and the vertex stabilizers are conjugates of $A$ or $B$. 
\end{proposition}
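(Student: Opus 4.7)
The construction of $\hat X$ already establishes that $\hat X$ is a $2$-dimensional, cocompact, simply connected $G$-CW-complex with vertex stabilizers that are $G$-conjugates of $A$ or $B$, $1$-cell stabilizers that are conjugates of $C$ (hence finite), and $2$-cell stabilizers that are finite subgroups of vertex stabilizers (being pointwise stabilizers of boundary paths of length $\geq 7$ in $\hat\Gamma$). Since the family $\mathcal F$ is generated by these cell stabilizers, every $H\in\mathcal F$ is contained in some vertex stabilizer and therefore fixes the corresponding vertex; thus $\hat X^H\neq\emptyset$ for every $H\in\mathcal F$, and the assertions about stabilizers are already in hand. The remaining work is to establish contractibility of $\hat X$ and of $\hat X^H$ for each $H\in\mathcal F$.

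For contractibility of $\hat X$ itself, I would argue as follows. Since $\hat X$ is simply connected and $2$-dimensional, by Hurewicz and Whitehead it suffices to show $H_2(\hat X;\mathbb Z)=0$. This is a classical small-cancellation statement: any reduced spherical diagram over the symmetrized set $\mathcal R$ must be trivial by a Greendlinger-type lemma, using $C'(1/12)\subseteq C'(1/6)$ together with the observation (recorded in the construction) that distinct $2$-cells of $\hat X$ have distinct boundary paths; see \cite[Ch.~V, \S11]{LySc01}. Hence $\hat X$ is aspherical and simply connected, and therefore contractible.

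Next, I would verify contractibility of $\hat X^H$ for each $H\in\mathcal F$, splitting into cases. If $H$ is infinite, I would argue that $H$ is contained in a \emph{unique} vertex stabilizer: since $1$- and $2$-cell stabilizers are finite, $H$ stabilizes no positive-dimensional cell, so $\hat X^H$ is discrete; and since for $C'(1/6)$ small-cancellation products of $A\ast_C B$ the intersection of any two distinct vertex stabilizers in $G$ reduces to a conjugate of the finite subgroup $C$ (combining the Bass--Serre structure of $A\ast_C B$ with \cref{thm:filling} applied to subgroups generated by conjugated copies), the only vertex $H$ fixes is the one whose stabilizer contains $H$. Hence $\hat X^H$ is a single point in this case. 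For finite $H$ the set $\hat X^H$ is a genuine $2$-subcomplex, and one must show it is connected, simply connected, and has trivial $H_2$.

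The main obstacle is the finite-$H$ case, which is where the $C'(1/12)$ hypothesis becomes essential. My approach would be an $H$-equivariant diagrammatic argument: given a loop $\gamma\subseteq\hat X^H$, choose a reduced disk diagram $D\to\hat X$ of minimal area filling $\gamma$ and show that $D$ is forced to be $H$-invariant, so that $D$ maps into $\hat X^H$ and fills $\gamma$ inside the fixed set. The $C'(1/12)$ condition is used to show that if $D$ failed to coincide with its $H$-translate $hD$ for some $h\in H$, then $D$ and $hD$ would have to share a common boundary-to-boundary piece long enough to trigger cancellation by the small-cancellation lemma, contradicting minimality of $D$; the slack between $1/12$ and $1/6$ is precisely what is needed to make this cancellation argument work for pairs of translated diagrams rather than a single diagram, which is consistent with the authors' remark on the difficulty of weakening this hypothesis. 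The same averaging argument applied to spherical diagrams yields $H_2(\hat X^H;\mathbb Z)=0$, completing the proof that $\hat X^H$ is contractible.
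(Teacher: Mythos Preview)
Your overall architecture matches the paper's---contractibility of $\hat X$, then the infinite-$H$ and finite-$H$ cases separately---and your treatment of the infinite case is essentially the paper's almost-malnormality argument (the paper obtains it by noting that $G$ is hyperbolic relative to $\{A,B\}$ and citing Osin). The substantive divergence, and the place where your proposal has a gap, is the finite-$H$ case and the role of $C'(1/12)$.

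In the paper, $C'(1/12)$ is \emph{not} spent on an equivariant diagram argument. It is spent on the passage from the algebraic small-cancellation condition on $\mathcal R\subset A\ast_C B$ to the \emph{geometric} $C'(1/6)$ condition on the complex $\hat X$; the factor-of-two loss is exactly the phenomenon explained in \Cref{rem:SmallCancellation} (algebraic piece length versus path length in $\hat\Gamma$), and the paper cites \cite{ACCMP} for this step. Once $\hat X$ is known to be a $C'(1/6)$ complex, contractibility of $\hat X$ and of $\hat X^H$ for finite $H$ are both handled by quoting \cite[Proposition~5.7]{HaMa13}, with no further small-cancellation loss. So your diagnosis that ``the slack between $1/12$ and $1/6$ is precisely what is needed to make this cancellation argument work for pairs of translated diagrams'' misidentifies where the hypothesis enters.

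Your proposed direct argument for finite $H$---take a minimal disk diagram $D$ filling an $H$-invariant loop $\gamma$ and argue $D=hD$---is a reasonable heuristic but is underspecified as written. Gluing $D$ and $hD$ along $\gamma$ yields a spherical diagram, and asphericity does force a relation between them, but this by itself does not give that the image of $D$ lies in $\hat X^H$; one needs either a reduction-without-cancellation argument in the style of \cite{HaMa13} or a tower/uniqueness argument, and it is not clear that $1/12$ versus $1/6$ is the relevant threshold for that step. The paper sidesteps this by outsourcing the finite-fixed-set contractibility to the existing $C'(1/6)$ literature.
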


 Recall that a collection of subgroups $\calP$ of a group $G$ is \emph{almost malnormal} if for any $P,P'\in\calP$ and $g\in G$, either $gPg^{-1} \cap P'$ is finite, or $P=P'$ and $g\in P$. 

\begin{proof}[Proof of \cref{prop:ConedOffCayleyComplex}]
The argument that if $R$ satisfies the $C'(1/12)$ small cancellation condition, then  $\hat X$ is a contractible  $C'(1/6)$ small cancellation complex can be found in~\cite[Proof of Theorem 7.1]{ACCMP} in a slightly more general context; see \Cref{rem:SmallCancellation} for an additional comment. 

Before showing that $\hat X$ is a model of  $E_{\mathcal{F}}G$ let us make a remark that small cancellation products are standard examples of relatively hyperbolic groups~\cite[Page 4]{osin}. In particular, since $G$ is hyperbolic relative to the collection of subgroups $\{A,B\}$,   \cite[Proposition 2.36]{osin} implies that  $\{A,B\}$  is an almost malnormal collection in $G$. 

Let $K$ be a subgroup in $\mathcal{F}$. If $K$ is a finite subgroup, then $\hat X^K$ is a contractible subcomplex by~\cite[Proposition 5.7]{HaMa13}. Suppose  $K$ is infinite. Then the fixed point set $\hat X^K$ consists only of vertices, since $1$-cells and $2$-cells have finite stabilizers. Since  $\{A,B\}$ is an almost malnormal collection, the $G$-stabilizers of any two distinct vertices of $\hat X$ have finite intersection. It follows that $\hat X^K$ consists of a single vertex and hence it is a contractible subcomplex.
\end{proof}

\begin{remark}[Alternative argument proving  \Cref{prop:ConedOffCayleyComplex}]
Let $\hat\Gamma$ be the one-skeleton of $\hat X$ and note that this  is the coned-off Cayley graph of $G$ with respect to $\{A,B\}$.  
Since $G$ is hyperbolic relative to $\{A,B\}$, we have that $\hat \Gamma$ is a fine graph in the sense of Bowditch~\cite{Bo12}, see for example~\cite[Proposition 4.3]{MPW11b}. Then it is a direct  consequence  of~\cite[Corollary 1.6]{AM21} that the coned-off Cayley complex $\hat X$ is a  cocompact model for  $E_\mathcal{F} G$.  
\end{remark}

\begin{remark}[On the $C'(1/12)$ hypothesis of \Cref{prop:ConedOffCayleyComplex}] \label{rem:SmallCancellation}
A notable hypothesis in the statement of \Cref{prop:ConedOffCayleyComplex} is the $C'(1/12)$ small cancellation condition. This comes from the fact that    $(A\ast_C B)/\nclose{\mathcal R}$ being a   $C'(\lambda)$-small cancellation quotient does not imply that the coned-off Cayley complex $\hat X$ is a $C'(\lambda)$ small cancellation complex. This phenomenon regarding a difference between  algebraic and geometric versions of small cancellation conditions in the case of quotients of free products have been observed in the literature. For example, in~\cite[Page 2404]{MPW11}, this issue is illustrated
by considering the free product  $\mathbb{Z}_2 \ast \mathbb{Z}_3$ and the relation $(ab)^7$ where $a$ and $b$ are generators of $\mathbb{Z}_2$ and $\mathbb{Z}_3$ respectively. Then $(\mathbb{Z}_2 \ast \mathbb{Z}_3)/\nclose{(ab)^7}$ is a $C'(1/7)$ small cancellation quotient of the free product $(\mathbb{Z}_2 \ast \mathbb{Z}_3)$,  the coned-off Cayley complex $\hat X$ is $C'(1/7+\epsilon)$ for any $\epsilon>0$, but it is not $C'(1/7)$. In this particular case, $\hat X$ is a subdivision of the $\{7,3\}$ tiling of the hyperbolic plane.    
\end{remark}

\subsection{Proof of \cref{pre-Machine-body}}
The proof relies on the following lemma.

\begin{lemma}\label{lemma:small:centralizers}
Let $A$ and $B$ be groups and let $C$ be a common finite subgroup.  Let $\mathcal R$ be a finite symmetrized subset of $A\ast_C B$ that satisfies the $C'(1/12)$ small cancellation condition, and $G = (A\ast_C B)/ \nclose{\mathcal R}$. If $A$ and $B$ have small centralizers, then $G$ has small centralizers.
\end{lemma}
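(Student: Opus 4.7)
My plan is to exploit the structure established in \Cref{prop:ConedOffCayleyComplex}: the group $G$ is hyperbolic relative to the almost malnormal collection $\{A,B\}$. Let $g\in G$ be an element of infinite order; the goal is to show that $C_G(g)$ is virtually cyclic, equivalently that $\langle g\rangle$ has finite index in $C_G(g)$. I will split the argument according to whether $g$ is parabolic or loxodromic in this relatively hyperbolic structure.

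First I would treat the parabolic case, i.e.\ $g$ is conjugate into $A$ or $B$. After replacing $g$ by a conjugate, we may assume $g\in A$. For any $h\in C_G(g)$ the element $g=hgh^{-1}$ lies in $A\cap hAh^{-1}$, and since $\langle g\rangle$ is infinite this intersection is infinite. The almost malnormality of $\{A,B\}$, recalled in the proof of \Cref{prop:ConedOffCayleyComplex}, then forces $h\in A$. Hence $C_G(g)=C_A(g)$, which is virtually cyclic by the hypothesis that $A$ has small centralizers.

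Next I would treat the loxodromic case, i.e.\ no conjugate of $g$ lies in $A$ or $B$. In this situation I invoke the standard fact that in a group hyperbolic relative to a collection of subgroups, the centralizer of any loxodromic element of infinite order is virtually cyclic; this is, for instance, \cite[Corollary~1.7 and Theorem~4.19]{osin} (or equivalent statements in Dru\c{t}u--Sapir / Tukia). Combining both cases yields the conclusion.

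The only delicate point is verifying the hypotheses needed to apply almost malnormality (which requires the intersection with a conjugate to be \emph{infinite}, hence the use of $g$ of infinite order) and to apply the loxodromic centralizer result (which requires the relatively hyperbolic structure, already available). I do not anticipate a substantial obstacle, since everything follows from results already invoked in \Cref{prop:ConedOffCayleyComplex}, together with a well-documented property of loxodromic elements in relatively hyperbolic groups.
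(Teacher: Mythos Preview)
Your proposal is correct and follows essentially the same approach as the paper: both arguments split into the parabolic case (handled via almost malnormality of $\{A,B\}$, reducing $C_G(g)$ to $C_A(g)$ or $C_B(g)$) and the non-parabolic case (handled by Osin's result on centralizers in relatively hyperbolic groups). The only differences are cosmetic: you explicitly conjugate $g$ into $A$, whereas the paper just takes $g\in A$, and you cite \cite[Corollary~1.7 and Theorem~4.19]{osin} where the paper cites \cite[Theorem~1.14]{osin}.
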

\begin{proof}
Recall that by \cref{thm:filling} we can regard $A$ and $B$ as subgroups of $G$. Let $g\in A$ be an element of infinite order, and let $x\in C_G(\langle g\rangle)$. Since $G$ is hyperbolic relative to $\{A,B\}$, we have that $\{A,B\}$ is an almost malnormal collection. Since $A$ is infinite,   $g\in xAx^{-1}\cap A$ implies $x\in A$. Therefore  $C_G(\langle g\rangle)=C_A(\langle g\rangle)$ and hence $\langle g\rangle$ has finite index in $C_G(\langle g\rangle)$. For $g\in B$ we proceed analogously. Finally if $g$ is neither subconjugate to  $A$ nor $B$, then by \cite[Theorem~1.14]{osin} we have that $\langle g\rangle$ has finite index in its centralizer.
\end{proof}

\begin{proof}[Proof of \cref{pre-Machine-body}]
\begin{enumerate}
    \item Consider the family $\mathcal{F}$ of subgroups generated by the $G$-stabilizers of cells of the coned-off Cayley complex $\hat X$.  \cref{prop:ConedOffCayleyComplex} implies that $\hat X$  is a 2-dimensional model of $E_{\calF}G$ such that the stabilizers of 1 and 2-cells are finite, and the vertex stabilizers are conjugates of $A$ or $B$. By \cite[Proposition~5.7]{HaMa13} every finite subgroup of $G$ fixes a point of $\hat X$, therefore  $\fin \subseteq \calF$ where $\fin$ is the family of finite subgroups of $G$. By \cref{coro:haefliger} applied for the case $\calF=\calG$, our  hypothesis on the dimensions of $A$ and $B$ and the fact that the stabilizers of all 1-cells and 2-cells of $\hat X$ are finite, we have 
    \begin{align*}
        \gdfin(G)&\leq \max\{\gdfin(G_{\sigma})+\dim(\sigma)| \ 
\sigma \text{ is a cell of } \ \hat X \}\\&\leq \max\{\gdfin(A),\gdfin(B),2 \}
    \end{align*}
On the other hand, since $A\leq G$, we have $\gdfin(G) \geq \gdfin(A)$. A completely analogous argument using the cohomological dimension part of \cref{coro:haefliger}  let us conclude that $\cdfin(G)\leq \max\{\gdfin(A),\gdfin(B),2 \}$. Since $G$ is not finite nor virtually free, we have that both $\gdfin(G)$  and $\cdfin(G)$ are greater than or equal to 2. This concludes the proof.

\item If $A$ and $B$ satisfy the ascending chain condition for finite subgroups, then $G$ also satisfies the  ascending chain condition for finite subgroups. Indeed, since $G$ is a small cancellation product of $A$ and $B$, every finite subgroup of $G$ is subconjugate to $A$ or $B$ or is a cyclic subgroup arising from a proper  power in $\mathcal R$. Since $\mathcal R$ is finite, the ascending chain condition for finite subgroups holds for $G$.
By our hypothesis and part (1) of this Theorem, we have $ \gdfin(G) =\max\{ \gdfin(A),\gdfin(B), 2\} \text{ and }   \cdfin(G) =\max\{ \cdfin(A),\cdfin(B), 2\}$. By \cref{lemma:small:centralizers}, $G$ has small centralizers. Thus by \cref{lemma:bound:dimensions} we have 
\begin{align*}
    \gdvc(G)&\leq \max\{\gdfin(G),2\}\\
    & \leq \max\{\gdfin(A),\gdfin(B),2\}
\end{align*}
Since $A$ and $B$ are finitely generated, then we can conclude $\gdvc(G)\geq 2$, see for instance \cite[Lemma~2.2]{LASS21}. Since $\gdvc(A)\leq \gdvc(G)$ and $\gdvc(B)\leq \gdvc(G)$ we conclude \[\max\{\gdvc(A),\gdvc(B),2\}\leq \gdvc(G)\leq\max\{\gdfin(A),\gdfin(B),2\}.\]
The analogous statement for cohomological dimension follows in exactly the same way.
\item To obtain that $ \cd_{R}(G) =\max\{ \cd_{R}(A),\cd_{R}(B), 2\}$ for any ring $R$, we run the same argument used in (1) considering the action of $G$ on $\hat X$ and using \cref{prop:baby:haefliger}.
\end{enumerate}

\end{proof}

\section{One-ended small cancellation quotients of free products}
\label{sec:SmallCancellation}

\begin{theorem} \label{prop:OneEndedProducts}
Let $G=(A\ast_C B)/\langle\langle \mathcal R \rangle\rangle$, where $C$ is a common finite subgroup of $A$ and $B$, and $\mathcal R$ is a symmetrized subset of $A\ast_C B$ that satisfies the $C'(1/6)$ small cancellation condition.
\begin{itemize}
\item Suppose that for  every $r\in \mathcal{R}$, its normal form does not  contain   elements of  finite order of $A$ or $B$.
\end{itemize}
If $A$ and $B$ are one-ended, then $G$ is one-ended.
\end{theorem}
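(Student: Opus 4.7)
The plan is to argue by contradiction. Assume $G$ has more than one end; because $G$ is finitely generated and infinite (it contains $A$ by \Cref{thm:filling}), Stallings' ends theorem provides a non-trivial action of $G$ on a simplicial tree $T$ with finite edge stabilizers and no global fixed point. The one-endedness of $A$ forbids any non-trivial splitting of $A$ over a finite subgroup, so by Bass--Serre theory $A$ fixes a vertex $v_A\in T$; this $v_A$ is unique, for otherwise $A$ would fix an edge and thereby lie in a finite edge stabilizer, contradicting the infiniteness of $A$. Similarly $B$ fixes a unique vertex $v_B$, and $v_A\neq v_B$, for if they were equal then $G=\langle A\cup B\rangle$ would fix the common vertex, contradicting non-triviality of the action. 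Set $\ell:=d_T(v_A,v_B)\geq 1$, and let $e_1,\dots,e_\ell$ be the consecutive edges of the geodesic from $v_A$ to $v_B$, with $e_1$ incident to $v_A$ and $e_\ell$ incident to $v_B$.

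The idea is then to exhibit, from any single relator, a non-backtracking closed edge-path of positive length in $T$, which is impossible in a tree. Pick $r\in\mathcal R$ with normal form $r=g_1 g_2\cdots g_n$ (alternating, say with $g_i\in A\setminus C$ for $i$ odd and $g_i\in B\setminus C$ for $i$ even). Since $r=1$ in $G$, the cyclic sequence of vertices $v_A,\ g_1v_B,\ g_1g_2v_A,\ \dots,\ g_1\cdots g_{n-1}v_B,\ g_1\cdots g_n v_A=v_A$ closes up. Join consecutive vertices by the unique $T$-geodesic to obtain a closed edge-path $p_r$. Using that $g_{j+1}\in A$ fixes $v_A$ and $g_{j+2}\in B$ fixes $v_B$, the segment between $g_1\cdots g_j v_A$ and $g_1\cdots g_{j+1}v_B$ equals $(g_1\cdots g_{j+1})\cdot[v_A,v_B]$, and the segment between $g_1\cdots g_{j+1}v_B$ and $g_1\cdots g_{j+2}v_A$ equals $(g_1\cdots g_{j+2})\cdot[v_B,v_A]$; in particular each segment has length $\ell$, so $p_r$ has total length $n\ell>0$. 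At a $B$-type junction $g_1\cdots g_{j+1}v_B$, the incoming and outgoing terminal edges of $p_r$ are respectively $(g_1\cdots g_{j+1})\,e_\ell$ and $(g_1\cdots g_{j+2})\,e_\ell$; these coincide precisely when $g_{j+2}\in\mathrm{Stab}_T(e_\ell)$, which is a finite subgroup of $B$. The hypothesis that the normal form of $r$ contains no finite-order element of $A$ or $B$ forces $g_{j+2}$ to be of infinite order, so it cannot lie in a finite subgroup, and therefore $p_r$ does not backtrack there. The $A$-type junctions are handled by the symmetric computation involving $\mathrm{Stab}_T(e_1)\subseteq A$, and the wrap-around junction at $v_A$ by the same formula with $g_{j+2}$ replaced by the cyclic next syllable $g_1\in A$.

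Hence $p_r$ is a non-backtracking closed edge-path of positive length in a simplicial tree, which is impossible; this contradiction shows that $G$ cannot split over any finite subgroup, so by Stallings' theorem $G$ is one-ended. I expect the main delicate step to be the junction analysis: one must track vertex parities carefully and verify that ``infinite order in the respective factor'' is exactly the property needed to escape the finite edge stabilizers $\mathrm{Stab}_T(e_1)$ and $\mathrm{Stab}_T(e_\ell)$ that could otherwise permit a backtrack. This is precisely what the finite-order-avoidance hypothesis on the normal forms of elements of $\mathcal R$ supplies, and where a weakening of that hypothesis would let the argument fail.
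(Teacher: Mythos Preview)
Your proof is correct and follows essentially the same strategy as the paper's: assume $G$ is multi-ended, invoke Stallings' theorem to obtain a non-trivial $G$-action on a tree $T$ with finite edge stabilizers, use one-endedness of $A$ and $B$ to pin down unique fixed vertices $v_A\neq v_B$, and then derive a contradiction from any $r\in\mathcal R$ via a ping-pong argument on $T$ whose key input is that each syllable of $r$ has infinite order and therefore cannot lie in a finite edge stabilizer. The only difference is packaging: the paper isolates the tree argument as a separate lemma (a displacement argument tracking which side of $\{v_A,v_B\}$ the orbit of the midpoint of $[v_A,v_B]$ lies, showing $r\cdot v_0\neq v_0$), whereas you carry it out inline by checking edge-by-edge that the closed path $p_r$ is non-backtracking. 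Both formulations encode the same fact, and your junction analysis of the edges $e_1$ and $e_\ell$ is exactly the place where the finite-order-avoidance hypothesis is used, matching the paper's observation that an infinite-order element of $B$ fixes only the single vertex $v_B$.
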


We are not aware of an example that shows that  the bulleted hypothesis of Proposition~\ref{prop:OneEndedProducts} is necessary. The proof of \Cref{prop:OneEndedProducts} relies on the following lemma:

\begin{lemma}[Ping-pong for elliptic elements acting on trees]\label{lem:PingPong}
Let $A$ and $B$ be subgroups of the $\Aut(T)$ where $T$ is a simplicial tree. Suppose
\begin{enumerate}
   \item $A$ and $B$ act with finite edge stabilizers on $T$.
    \item $A$ has a global fixed point $v_A$,  $B$ has a global fixed point $v_B$, and $v_A\neq v_B$.
\end{enumerate}
Suppose that $C$ is  a finite common subgroup of $A$ and $B$. Consider the natural map $\varphi\colon A\ast_C B \to \Aut(T)$.
If $r\in \ker(\varphi)$, then either $r$ is the identity element, or its normal form contains elements of finite order of $A$ or $B$.  In particular, if $A$ and $B$ are torsion-free, then $A\ast B \to \Aut(T)$ is injective.
\end{lemma}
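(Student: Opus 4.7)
My plan is a ping-pong argument exploiting that $A$ and $B$ are elliptic on $T$ with distinct fixed points. Let $d = d(v_A, v_B) \geq 1$ and let $\gamma = [v_A, v_B]$ be the geodesic joining them, with initial edge $e_A$ at $v_A$ and terminal edge $e_B$ at $v_B$ (so $e_A = e_B$ when $d = 1$). The key input from hypothesis (1) is that the edge stabilizers $\mathrm{Stab}_A(e_A)$ and $\mathrm{Stab}_B(e_B)$ are finite, so any $a \in A$ of infinite order satisfies $a \cdot e_A \neq e_A$, and any $b \in B$ of infinite order satisfies $b \cdot e_B \neq e_B$.

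From this I would extract the following geometric fact: for infinite-order $a \in A$, the image geodesic $a \cdot \gamma = [v_A, a \cdot v_B]$ emanates from $v_A$ along $a \cdot e_A \neq e_A$, hence the concatenation $[v_B, v_A] \cup [v_A, a \cdot v_B]$ is non-backtracking and $d(v_B, a \cdot v_B) = 2d$. The symmetric statement holds for infinite-order $b \in B$ with $v_A$ and $v_B$ interchanged.

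I would then prove the contrapositive. Suppose $r = c \cdot x_1 x_2 \cdots x_n$ is in normal form, with $c \in C$, $n \geq 0$, syllables $x_i \in (A \cup B) \setminus C$ alternating between the two factors, and assume each $x_i$ has infinite order in its factor. The case $n = 0$ is trivial since $C \hookrightarrow \Aut(T)$. For $n \geq 1$, assume without loss of generality that $x_n \in A$ (the opposite case is symmetric) and define $p_0 = v_B$ and $p_k = x_{n-k+1} \cdot p_{k-1}$ for $1 \leq k \leq n$, so that $p_n = \varphi(x_1 \cdots x_n) \cdot v_B$. By induction on $k \geq 1$, I would establish:
\begin{itemize}
\item If $k$ is odd, then $[v_A, p_k]$ starts at $v_A$ with edge $x_{n-k+1} \cdot e_A \neq e_A$, and $[v_B, p_k] = [v_B, v_A] \cup [v_A, p_k]$ is non-backtracking.
\item If $k$ is even, then $[v_B, p_k]$ starts at $v_B$ with edge $x_{n-k+1} \cdot e_B \neq e_B$, and $[v_A, p_k] = [v_A, v_B] \cup [v_B, p_k]$ is non-backtracking.
\end{itemize}
The base case $k = 1$ is the geometric fact above applied to $x_n$. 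In the inductive step, $x_{n-k+1}$ fixes $v_A$ or $v_B$ according to its factor and, being of infinite order, moves the distinguished edge at that vertex; the previous non-backtracking description of $p_{k-1}$ then transforms into the required one for $p_k$.

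From the invariant at $k = n$ we read off $d(v_B, p_n) \geq d > 0$, so $p_n \neq v_B$. Since $c \in C$ fixes $v_B$, $\varphi(r) \cdot v_B = c \cdot p_n \neq v_B$, whence $\varphi(r) \neq 1$. The ``in particular'' clause follows because torsion-freeness of $A$ and $B$ forces $C$ to be trivial and eliminates finite-order syllables altogether. The main obstacle I foresee is the careful bookkeeping in the inductive step --- tracking which vertex is fixed by the current syllable, which edge is moved, and verifying that entering and exiting edges at that vertex are distinct --- but these checks ultimately reduce to the finite-edge-stabilizer input.
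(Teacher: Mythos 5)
Your argument is correct and is essentially the paper's own proof: both are ping-pong arguments whose engine is that an infinite-order element of $A$ (resp.\ $B$) cannot lie in the finite stabilizer of the edge at $v_A$ (resp.\ $v_B$) pointing toward the other fixed vertex, so each syllable pushes the orbit point strictly away. The only difference is bookkeeping --- the paper tracks a ``points to $A$/$B$'' dichotomy and the increasing distance to $\{v_A,v_B\}$ starting from the midpoint of $[v_A,v_B]$, while you track non-backtracking geodesic concatenations starting from $v_B$ --- and your handling of the coset factor $c\in C$ and the $n=0$ case is a harmless refinement.
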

\begin{proof}
Consider $T$ as metric space with the path metric arising by regarding each edge as an interval of length one. Then the groups $A$ and $B$ act faithfully by isometries on $T$. For an arbitrary point $v$ of $T$, we say that $v$ points to $A$ (resp. $B$) if the geodesic from $v$ to $v_B$ (resp. $v_A$) contains $v_A$ (resp. $v_B$). In particular, if $v$ points to $A$ (resp. $B$) then $\dist(v,\{v_A,v_B\})$ equals $\dist(v,v_A)$ (resp. $\dist(v,v_B)$).

Observe that if $v$ points to $A$ and $b\in B$ has infinite order, then $b.v$ points to $B$ and 
\[\dist(v,\{v_A, v_B\}) <  \dist(b.v,\{v_A, v_B\}).\]
This is a direct consequence of $T$ being a tree and that the fixed point set of $b$ is the single vertex $v_B$ (since $b$ has infinite order it can not fix an edge of the tree). Analogously, if $v$ points to $B$ and $a\in A$ has infinite order then $a.v$ points to $A$ and  
\[\dist(v,\{v_A, v_B\}) <  \dist(a.v,\{v_A, v_B\}).\]

Let $r\in A\ast_C B$ be a non-trivial element such that its normal form $r=g_1g_2\cdots g_{k}$ only involves elements of infinite order of $A$ and $B$. Let $v_0$ be the midpoint of  the geodesic path from $v_A$ to $v_B$. Note that $v_0$ neither points to $A$ nor $B$. Note that if $g_k\in A$ then $g_k.v_0$ points to $A$; and if $g_k\in B$ then $g_k.v_0$ points to $B$. By the statement of the previous paragraph,  $r.v_0$ points either to $A$ or $B$, and hence $r.v_0 \neq v_0$. Hence $r\not\in \ker(\varphi)$.
\end{proof}

\begin{proof}[Proof of \Cref{prop:OneEndedProducts}]
Suppose that $G$ is multiended. By Stallings, $G$ admits an non-trivial action $G\to \Aut(T)$ on a simplicial tree  $T$ with finite edge stabilizers and without inversions. Since $A$ is one-ended and edge stabilizers are finite, it follows that the fixed point set of $A$ in $T$ is a single vertex $v_A$, and by symmetry $B$ fixes a single vertex $v_B$. Since $A$ and $B$ generate $G$, and the $G$-action on $T$ is non-trivial, we have that $v_A \neq v_B$. 
Restrictions of the $G$-action   induce  maps $A\to \Aut(T)$ and $B\to \Aut(T)$; and these two morphisms induced a morphism $A\ast_C B \to \Aut(T)$ such that
\[\begin{tikzcd}
A\ast_C B \arrow[r] \arrow[d] & \Aut(T)\\ 
G   \arrow[ru] & \\ \end{tikzcd}\]
is a commutative diagram. Let $r\in\mathcal R$. By definition of $C'(1/6)$ small cancellation condition $r$ is non-trivial, hence by  Lemma~\ref{lem:PingPong}, $r$ contains elements of finite order in its normal form, which is a contradiction. 
\end{proof}

The proof of the following proposition was communicated to us by Dani Wise.

\begin{proposition}\label{prop:wise2}
Let $G$ be a torsion-free, $2$-generated group. If $G$ is not a free group, then $G$ is one-ended.
\end{proposition}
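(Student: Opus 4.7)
The plan is to argue by contradiction: assume $G$ is not one-ended, and show $G$ must be free. By Hopf's theorem, a finitely generated group has $0$, $1$, $2$, or infinitely many ends, so I would split into three cases according to the end count.

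First I would dispose of the low-end cases. If $G$ has $0$ ends, then $G$ is finite, so torsion-freeness forces $G$ to be trivial, hence free of rank $0$. If $G$ has $2$ ends, then $G$ is virtually infinite cyclic; together with torsion-freeness this yields $G \cong \mathbb{Z}$, which is free.

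The main case is that $G$ has infinitely many ends. Here I would invoke Stallings' theorem on ends of groups: $G$ splits non-trivially as an amalgamated free product or an HNN extension over a finite subgroup. Since $G$ is torsion-free, that finite edge group must be trivial, so either $G \cong A \ast B$ with $A,B$ non-trivial, or $G \cong A \ast \mathbb{Z}$.

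The step where the hypothesis that $G$ is $2$-generated enters is Grushko's theorem, which says the rank of a free product is the sum of the ranks of the factors. In the amalgamated case, $\mathrm{rank}(A) + \mathrm{rank}(B) = \mathrm{rank}(G) \leq 2$, so each factor has rank exactly $1$; being torsion-free and cyclic, each factor is isomorphic to $\mathbb{Z}$, whence $G \cong F_2$. In the HNN case, $\mathrm{rank}(A) + 1 \leq 2$ forces $A$ to be trivial or $\mathbb{Z}$, giving $G \cong \mathbb{Z}$ or $G \cong F_2$. Either way $G$ is free, contradicting the hypothesis. The main obstacle is not really a difficulty but rather a bookkeeping point: one must verify that Stallings' splitting is actually non-trivial (so that Grushko produces a genuine equation of ranks) and carefully rule out degenerate factors using torsion-freeness.
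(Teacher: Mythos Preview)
Your proof is correct and follows essentially the same strategy as the paper: Stallings' splitting over a finite (hence trivial) subgroup, followed by Grushko's theorem to force the factors to be cyclic, hence infinite cyclic by torsion-freeness. The only differences are cosmetic---you treat the $0$-end and $2$-end cases separately before invoking Stallings, whereas the paper handles everything under the umbrella ``multiended'', and in the HNN case the paper reduces $H\ast\mathbb{Z}$ with $H$ non-trivial back to the free-product case rather than applying Grushko again.
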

\begin{proof}
Suppose $G$ is multiended. Then $G$ splits as a free product or an HNN extension over a trivial group. First, suppose $G$ is a free product $A\ast B$. Since $G$ is $2$-generated,  Grushko's theorem implies that $A$ and $B$ are cyclic. Then $G$ being torsion-free implies that $A$ and $B$ are infinite cyclic groups. It follows  that  $G$ is a free group of rank two.  Suppose that $G$ is an  HNN extension with a trivial edge group. Then $G=H\ast \dbZ$ for some subgroup $H$ of $G$. If $H$ is non-trivial, we are in the previous case, and $G$ is a free group of rank two. If $H$ is trivial then $G$ is a free group of rank one. 
\end{proof}



\bibliographystyle{alpha} 
\bibliography{myblib}
\end{document}